\documentclass[paper=a4, fontsize=11pt]{scrartcl}

\usepackage{amsmath,amsfonts,amsthm} 
\usepackage{bm} 
\usepackage{dsfont}
\usepackage[pdftex]{graphicx}	
\usepackage{enumitem}
\usepackage[numbers]{natbib}

\newtheorem*{theorem*}{Theorem}

\usepackage{url}
\usepackage{colortbl}
\usepackage{booktabs}
\usepackage{tabularx} 
\usepackage{float} 
\usepackage{multirow}%
\usepackage{fullpage}
\usepackage{natbib}
\usepackage{subcaption}  
\usepackage{graphicx}
\usepackage{tabularx} 
\usepackage{float} 
\usepackage{caption}

\usepackage{algorithm}
\usepackage{algpseudocode}
\usepackage{adjustbox}
\usepackage{xr-hyper} 

\usepackage[colorlinks=true,citecolor=blue,urlcolor=blue]{hyperref}
\usepackage{natbib}

\usepackage{graphicx}
\usepackage[english]{babel}

\usepackage{enumitem}
\usepackage{dsfont}
\usepackage{amsmath,amssymb}

\newcommand{\diam}{\operatorname{diam}}

\newcommand{\pr}{\mathbb{P}}

\newcommand{\argmin}{\operatornamewithlimits{\arg\min}}
\newcommand{\ind}{\mathds{1}}

\newcommand{\PP}{\mathbb{P}}

\newcommand{\EE}{\mathbb{E}}

\newtheorem{theorem}{Theorem}
\newtheorem{definition}[theorem]{Definition}

\newtheorem{corollary}[theorem]{Corollary}
\newtheorem{proposition}[theorem]{Proposition}
\newtheorem{lemma}[theorem]{Lemma}

\usepackage[T1]{fontenc}

\usepackage[english]{babel}															
\usepackage{amsmath,amsfonts,amsthm} 
\usepackage{bm} 
\usepackage{dsfont}
\usepackage[pdftex]{graphicx}	
\usepackage{enumitem}

\usepackage{url}
\usepackage{colortbl}
\usepackage{booktabs}
\usepackage{tabularx} 
\usepackage{float} 
\usepackage{multirow}%
\usepackage{fullpage}
\usepackage{natbib}
\usepackage{graphicx}
\usepackage{tabularx} 
\usepackage{float} 
\usepackage{caption}

\usepackage{xr-hyper} 

\usepackage[colorlinks=true,citecolor=blue,urlcolor=blue]{hyperref}
\usepackage{natbib}

\usepackage{chngcntr}
\counterwithout{table}{section}
\counterwithout{figure}{section}

\newcommand{\horrule}[1]{\rule{\linewidth}{#1}} 	

\title{
		\vspace{-1in} 	
		\usefont{OT1}{bch}{b}{n}
		\horrule{0.5pt} \\[0.4cm]
		\Large Pointwise convergence of purely random partition estimators: from random trees to prototype rules
		\\
		\horrule{2pt} \\[0.1cm]
}

\date{\large \today}

\author{\textbf{Jérémy Bettinger, François Portier, Adrien Saumard} \\
\large \href{mailto:jeremy.bettinger@ensai.fr}{jeremy.bettinger@ensai.fr} ; \href{mailto:francois.portier@ensai.fr}{francois.portier@ensai.fr} ; \href{mailto:adrien.saumard@ensai.fr}{adrien.saumard@ensai.fr} \\
\large Department of Statistics, \\
\large University of Rennes, ENSAI, CNRS, CREST-UMR 9194, F-35000 Rennes, France}

\begin{document}

\maketitle

\begin{abstract}
We study pointwise convergence rates of purely random partition estimators in nonparametric regression, where the partition -- into hyper-rectangles by purely random trees, or into Voronoi cells by prototype rules -- is built independently of the responses. Our analysis rests on a single geometric criterion, shape regularity, relating the diameter of a cell to its volume, which is shown in \citep{BPS} to be necessary and sufficient, up to logarithmic factors, for achieving the minimax rate $n^{-1/(d+2)}$. We show that centered and uniform trees are not shape-regular -- their cells' aspect ratio grows exponentially with the number of splits with probability bounded away from zero -- explaining the super-logarithmic corrections in their error bounds, whereas Mondrian trees, whose splits adapt to the current cell geometry, are shape-regular in probability and attain the minimax rate. The same analysis applied to Voronoi partitions yields the first pointwise concentration bounds for Proto-NN, resolving an open problem of \citep{gyorfi2021universal}, and shows that OptiNet achieves the minimax rate with markedly better success probability -- even almost surely, for a suitable choice of parameters -- thanks to its $\eta$-net construction.
\end{abstract}

\section{Introduction}\label{s1}

Partition-based estimators form a major class of methods for nonparametric regression. For a given point $x$, it results from a two-step procedure: (i) partitioning the covariate space, thereby assigning to each $x$ the unique element of the partition that contains it, and (ii) locally averaging the responses associated to the sample points falling in that element. Within this framework, a class of particular interest is that of \textit{purely random} partitions, built independently of the observed sample. Two families dominate the literature, distinguished only by the geometry of their cells: partitions into hyper-rectangles generated by purely random trees \citep{breiman2000some,arlot2014analysis,biau2016random} -- including their centered, uniform, and Mondrian variants \citep{lakshminarayanan2014mondrian} -- and Voronoi partitions generated by an auxiliary sample of prototypes, independent of the original data \citep{gyorfi2021universal,kerem2023error}, known as nearest neighbor-based prototype learning rules \citep[Chapter 19]{devroye96probabilistic}. Data-dependent partitions, such as CART \citep{breiman1984classification} or statistically equivalent blocks \citep{anderson}, do not fit the \textit{purely random} partitions framework; we refer to \cite{devroye96probabilistic} for an account of partition-based estimators.

A cornerstone result on partition-based estimators is Theorem 6.1 in \cite{devroye96probabilistic}: whenever the diameter of each cell vanishes while the number of points per cell grows, the resulting estimator is consistent (see Section \ref{s2} for a precise statement). Satisfactory as it is from the consistency viewpoint, this result is silent about convergence rates, and little is known about the rates of general partition-based estimators. The reason is that rates, unlike consistency, are governed by a quantitative balance between two features of the cell containing $x$: its diameter, which drives the bias, and its volume, which drives the number of points it captures and hence the variance. Achieving the minimax rate $n^{-1/(d+2)}$ for Lipschitz regression thus requires the diameter and the volume of the cell to decay at comparable rates -- a genuinely geometric requirement, formalized by the \textit{shape regularity} condition of \cite{BPS}, which is necessary and sufficient, up to logarithmic factors, for optimal pointwise and uniform estimation. This single criterion puts hyper-rectangular and Voronoi cells on the same footing, and reduces the rate question, for any purely random partition, to one and the same geometric question: does the randomness used to build the partition preserve well-shaped cells?

\noindent\textit{State of the art for purely random trees.} For tree-based partitions, the question above amounts to asking whether axis-aligned splits, performed blindly with respect to the current shape of the cell, keep the aspect ratio of the cells under control. A centered tree splits each cell at its midpoint along a direction selected uniformly at random; a uniform tree is similar, except that the split location is drawn uniformly within the cell. Most available results concern $L_2$ consistency: Mondrian trees attain the optimal rate of convergence for Lipschitz functions \citep{mondrian, minimaxmondrian}, whereas centered trees fail to reach it \citep{biau2012analysis,klusowski2021sharp}. Pointwise deviation bounds, and the geometric mechanism behind these discrepancies, have remained comparatively unexplored.

\noindent\textit{State of the art for Voronoi partitions.} The same question arises for prototype-based methods, with a different cell geometry. Two well-known such methods are \textit{Proto-NN} \citep{gyorfi2021universal} and \textit{OptiNet} \citep{NIPS2014_8c19f571, NIPS2017_934815ad, hanneke2021universal, kerem2023error}. Both construct a Voronoi partition from an auxiliary sample of prototypes, drawn independently of the responses, so as to favor cells that are more isotropic than the axis-aligned ones produced by tree constructions; OptiNet further imposes, through an $\eta$-net construction, a minimal spacing $\eta$ between prototypes, allowing for markedly finer control over cell volumes. Interestingly, Proto-NN and OptiNet both enjoy universality properties in general metric spaces, while the variant proposed in \cite{xue2018achieving}, called Proto-$k$-NN, fails to be universally consistent, for reasons similar to those explaining the failure of standard $k$-NN \citep{cerou2006nearest}. In finite dimension, convergence rates matching the minimax rate for Lipschitz functions have been obtained for OptiNet \citep{kerem2023error} and Proto-$k$-NN \citep{xue2018achieving,gyorfi2021universal}; for Proto-NN, obtaining rates has remained an open problem.

Viewed through the lens of shape regularity, these two families are no longer separate case studies but two answers to the same question, and the contributions of this work are organized accordingly.

\noindent \textbf{(i)} We first establish a new pointwise concentration inequality for the estimation error of partition-based regression, valid for a general (possibly random) partition. The bound displays two terms: a variance term scaling as the inverse square root of the number of points in the cell, and a bias term scaling as the diameter of the cell. It makes explicit that the geometry of the cell -- its volume relative to its diameter, as captured by the shape regularity condition of \cite{BPS} -- is the sole quantity governing the rate, and it serves as the common device through which all the constructions studied in this paper are subsequently analyzed.

\noindent \textbf{(ii)} Applying this device to trees, we obtain nonasymptotic deviation inequalities on the pointwise estimation error: the rate $n^{-1/(d+2)} \exp({2\sqrt{\log n \log\log n}})$ for centered trees, and a degraded rate involving $n^{-1/(\Theta d + 2)}$ for uniform trees where $\Theta \approx 5.5$ (Theorems \ref{cor_UcR} and \ref{cor_URT}). We show that these sub-optimal exponents and super-logarithmic corrections are unavoidable due to the inherent complexity of such constructions: with positive probability, independent of $n$, the aspect ratio $h_+(V)/h_-(V)$ grows exponentially in $\sqrt{N/d}$ (Propositions \ref{centered tree not regular} and \ref{uniform tree not regular}), so that neither construction is shape-regular. Mondrian trees \citep{mondrianroy2008,minimaxmondrian} escape this pitfall: selecting the splitting direction with probability proportional to the current side lengths forces the cell to remain nearly isotropic, guaranteeing shape regularity in probability (Proposition \ref{mondrian}) and, in turn, a genuinely nonasymptotic minimax rate (Theorem \ref{thm:mondrian}) -- though only with a probability whose decay is polynomial rather than exponential, a limitation that we show cannot be strengthened into an almost sure guarantee. This dichotomy illustrates a general principle: a splitting rule that is blind to the current shape of the cell produces an aspect ratio growing exponentially with non-negligible probability, so that achieving optimal rates requires breaking the independence between the splitting direction and the geometry of the cell -- whether through an appropriate stochastic mechanism such as Mondrian's, or through a data-driven adaptive rule in the spirit of CART.

\noindent \textbf{(iii)} Applying the same device to Voronoi partitions, we establish a nonasymptotic deviation bound for Proto-NN (Theorem \ref{th:main_th_proto} and Corollary \ref{cor:main_th_proto}), showing that the minimax optimal rate is achieved provided the number of prototypes $m$ is chosen appropriately as a function of $n$. To the best of our knowledge, no such result was previously available, and the problem is nontrivial, as pointed out by \cite{gyorfi2021universal}: ``\textit{Obtaining convergence rates for the universally consistent Proto-NN classifier (...) is currently an open research problem}''. We further obtain an analogous, slightly more favorable bound for OptiNet (Theorem \ref{th:main_th_OptiNet}): for a suitable choice of $\eta$ and $m$, it even yields an almost sure rate of order $(\log n/n)^{1/(d+2)}$. Notably, the success probability of the Proto-NN bound degrades exactly as it does for Mondrian trees, and we attribute both phenomena to the same cause: an auxiliary layer of randomness in the construction of the partition, independent of the responses, whose fluctuations occasionally produce cells of abnormally small size. The $\eta$-net step of OptiNet is precisely what suppresses these fluctuations -- confirming, from the Voronoi side, the principle identified for trees in (ii).

The paper is organized as follows. Section \ref{s2} sets up the regression framework and the partition-based regression estimator, states the general deviation bound, and recalls the shape regularity condition of \cite{BPS}, which is shown there to be necessary and sufficient, up to logarithmic factors, for achieving optimal rates. Sections \ref{sec_PRT} and \ref{s51} then apply this common framework to the two families of purely random partitions: Section \ref{sec_PRT} deals with purely random trees -- centered, uniform, and Mondrian -- while Section \ref{s51} develops the corresponding theory for Proto-NN and OptiNet.

\section{A pointwise deviation bound}\label{s2}

Let $d\geq 1 $ and $n \geq 1$. Let $ \left\{(X_i,Y_i) \in \mathbb R^d\times\mathbb R \,  : \, i=1,\ldots,n\right\}$ be a collection of random variables. Consider the following assumption.

\begin{enumerate}[label=(D), wide=0.5em,  leftmargin=*]
\item \label{cond:D} 
The random variables $\{(X,Y), (X_i,Y_i)_{i=1,\ldots,n} \}$ are independent and identically distributed with common distribution $  P$ on $\mathbb R^d\times\mathbb R$. 
\end{enumerate}
The vector $X\in \mathbb R^d$  is called the covariate vector and the variable $Y\in\mathbb R$ is called the response. The marginal distribution of $X$ is denoted  $P^X$ and its support is $S_X\subset\mathbb R^d$. Our goal is to estimate the regression function $(x\in S_X) \mapsto g(x)=\mathbb E[Y|X=x]$.

We consider partitioning estimators defined as follows. Let $\mathcal{V} = (V_j)_{j \in J}$ be a collection of measurable sets forming a partition of the covariate space $S_X $. For any $x \in S_X$, we denote by $\mathcal{V}(x)$ the unique cell of the partition $\mathcal{V}$ containing $x$. The regression estimator associated with the partition $\mathcal{V}$, denoted by $\hat g_{\mathcal V}$, is defined as:
$$\forall x \in S_X, \quad \hat g_{\mathcal V}(x) = \frac{\sum_{i=1}^n Y_i\mathds 1 _{ \mathcal V(x) }(X_i) }{\sum_{i=1}^n \mathds 1 _{ \mathcal V(x) }(X_i)  }\; ,$$
with the convention that $0/0 = 0$, which is in force throughout the rest of this work. The partition $\mathcal V$ might be random but it must be independent from the sample $(X_i,Y_i)_ {i=1,\ldots,n}$ or, more generally, measurable with respect to the $\sigma$-algebra generated by $X_1,\ldots, X_n$.

For any set $V\subset \mathbb R^d$, the diameter and the empirical volume of $V$ are defined respectively as
\begin{align*}
& \diam (V)  = \sup_{(x,y)\in V\times V} \|x-y\|_2, \qquad \qquad  P_n^X(V) =n^{-1} \sum_{i=1} ^ n \mathds 1 _{V}(X_i)  ,
\end{align*}
where $\|x\|_2^2 = \sum_{k=1} ^d x_k^2$. Throughout, $\lambda$ denotes the Lebesgue measure on $\mathbb{R}^d$ and $V_d$ the volume of the Euclidean unit ball. Let us recall the following classical result on the convergence of the estimation error associated to a partitioning estimator $\hat g_{\mathcal V}$. The next is an asymptotic result, one needs to consider a partition $\mathcal{V}_n $ that may depend on $n$.

\begin{theorem*}[Theorem 6.1 in \cite{devroye96probabilistic}]
    Let $n \geq 1$ and $d\geq 1$. Suppose that \ref{cond:D} is fulfilled and suppose that $Y\in \{0,1\}$. For each $n$, let $\mathcal{V}_n$ be a partition of $S_X$ which is $(X_1,\ldots, X_n)$-measurable and let $\mathcal V_n(x)$ denote the cell containing $x$.
    If $\diam(\mathcal V_n(X)) \to 0$, as $n\to\infty$, in probability, and $ n P_n^X(\mathcal{V}_n(X))    \to  \infty$, as $n\to\infty$, in probability, then 
    $$ \lim_{n \to + \infty} \mathbb E(  |\hat g_{\mathcal V}(X) - g(X)|  ) = 0.$$
\end{theorem*}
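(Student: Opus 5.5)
We prove the classical consistency result by the standard bias--variance decomposition, conditionally on the design and the partition. Write $\mathcal V_n(x)$ for the cell containing $x$, $N_n(x) = nP_n^X(\mathcal V_n(x))$ for the number of sample points in it, and
$$\bar g_n(x) = \frac{\sum_{i=1}^n g(X_i)\mathds 1_{\mathcal V_n(x)}(X_i)}{N_n(x)}, \qquad \text{with } 0/0=0 .$$
The triangle inequality gives
$$\mathbb E|\hat g_{\mathcal V_n}(X)-g(X)| \le \mathbb E|\hat g_{\mathcal V_n}(X)-\bar g_n(X)| + \mathbb E|\bar g_n(X)-g(X)| .$$
The first expectation is a variance term and the second a bias term. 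We treat them separately.

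\textbf{Variance term.} Condition on $X, X_1,\ldots,X_n$, and hence on $\mathcal V_n$, which is $(X_1,\ldots,X_n)$-measurable. The variables $Y_i - g(X_i)$ are then independent, centered and bounded by $1$, since $Y\in\{0,1\}$. By Cauchy--Schwarz,
$$\mathbb E\bigl[|\hat g_{\mathcal V_n}(X)-\bar g_n(X)| \,\big|\, X,X_1,\ldots,X_n\bigr] \le \min\bigl(1, N_n(X)^{-1/2}\bigr),$$
because the conditional variance is at most $N_n(X)/(4N_n(X)^2)$ on $\{N_n(X)\ge 1\}$, and the difference is trivially at most $1$. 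The bound $\min(1,N_n(X)^{-1/2})$ is bounded and tends to $0$ in probability by the hypothesis $N_n(X)\to\infty$. Dominated convergence then sends this term to $0$.

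\textbf{Bias term.} This is the main obstacle, because $g$ is only measurable and no continuity is assumed. Fix $\varepsilon>0$. Since $P^X$ is a finite Borel measure on $\mathbb R^d$, continuous compactly supported functions are dense in $L^1(P^X)$, so there is a uniformly continuous $\tilde g$ with values in $[0,1]$ and $\mathbb E|g(X)-\tilde g(X)|<\varepsilon$. Define $\tilde{\bar g}_n$ analogously to $\bar g_n$, with $\tilde g$ in place of $g$, and split
$$|\bar g_n - g| \le |\bar g_n-\tilde{\bar g}_n| + |\tilde{\bar g}_n-\tilde g| + |\tilde g - g| .$$
We control the three pieces in turn.

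\begin{itemize}
\item The last piece has expectation below $\varepsilon$ by the choice of $\tilde g$.
\item The middle piece is bounded by the modulus of continuity of $\tilde g$ evaluated at $\diam(\mathcal V_n(X))$ on $\{N_n(X)\ge1\}$. The event $\{N_n(X)=0\}$ has vanishing probability, and $\diam(\mathcal V_n(X))\to0$ in probability. Since everything is bounded by $1$, dominated convergence sends this piece to $0$.
\item The first piece needs the following symmetrization identity, using exchangeability of $(X,X_1,\ldots,X_n)$:
$$\mathbb E\Bigl[\sum_i \frac{\mathds 1_{\mathcal V_n(X)}(X_i)}{N_n(X)}\, |g-\tilde g|(X_i)\Bigr] \lesssim \mathbb E|g-\tilde g|(X).$$
\end{itemize}

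The delicate point is the symmetrization, because $\mathcal V_n$ depends on the $X_i$. In the purely random case, where $\mathcal V_n$ is independent of the sample, one can argue as follows. Condition on $\mathcal V_n$. For a fixed cell $A$, symmetry among the points falling in $A$ gives
$$\mathbb E\Bigl[\frac{1}{N(A)}\sum_{i: X_i\in A}|g-\tilde g|(X_i)\,\mathds 1_{N(A)\ge1}\Bigr] \le \mathbb E\bigl[|g-\tilde g|(X)\mid X\in A\bigr].$$
Summing this over cells weighted by $P^X(A)$ yields exactly $\mathbb E|g-\tilde g|(X)\le\varepsilon$.

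For general $(X_1,\ldots,X_n)$-measurable partitions, one replaces this argument by the cell-count control of \cite{devroye96probabilistic}. This uses a factor $2$ obtained by swapping $X$ with a sample point, at the cost of a multiplicative constant. In either case the first piece contributes $O(\varepsilon)$.

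Letting $n\to\infty$ and then $\varepsilon\to0$ concludes the proof.
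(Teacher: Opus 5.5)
The gap is your last step. For a general $(X_1,\ldots,X_n)$-measurable partition you assert that a cell-count/swapping argument yields
\[
\EE\Bigl[\frac{1}{N_n(X)}\sum_{i=1}^n \ind_{\mathcal V_n(X)}(X_i)\,|g-\tilde g|(X_i)\Bigr]\lesssim \EE|g-\tilde g|(X).
\]
No argument can give this. The inequality is false for such partitions, and so is the statement itself in that generality.

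Here is a counterexample. Take $d=1$, $X$ uniform on $[0,1]$, and $Y=g(X)=\ind_C(X)$, where $C$ is a Borel set such that $0<\lambda(C\cap I)<\lambda(I)$ for every interval $I$ (a classical construction). Let $I_1,\ldots,I_k$ be a grid of intervals of length $1/k$ and $S=\{X_1,\ldots,X_n\}$. Split each $I_j$ into the two cells $((I_j\cap C)\setminus S)\cup(I_j\cap C^c\cap S)$ and $((I_j\cap C^c)\setminus S)\cup(I_j\cap C\cap S)$.

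This partition has the following properties:
\begin{itemize}
\item It is $(X_1,\ldots,X_n)$-measurable, and every cell has diameter at most $1/k$.
\item $N_n(X)$ stochastically dominates a $\mathrm{Bin}(n,\varphi(k))$ variable, where $\varphi(k)=\min_j\min\{\lambda(I_j\cap C),\lambda(I_j\cap C^c)\}>0$. Letting $k=k_n\to\infty$ slowly enough therefore gives $N_n(X)\to\infty$ in probability.
\item The sample points in the cell of $X$ all carry the label $1-g(X)$, so $|\hat g_{\mathcal V_n}(X)-g(X)|=1$ whenever $N_n(X)\geq 1$.
\end{itemize}
In your decomposition the variance term is identically zero, since there is no noise. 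The first bias piece then has expectation at least $1-\varepsilon-o(1)$, not $O(\varepsilon)$.

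The underlying issue is that a sample-dependent partition can choose \emph{which} sample points accompany $X$ in its cell. This destroys the within-cell exchangeability that your symmetrization relies on, and so would any swapping bound of the kind you allude to.

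What you do have is a correct proof when $\mathcal V_n$ is independent of $(X_i,Y_i)_{i\le n}$, possibly built from an independent source of randomness. That covers all the partitions the paper works with: purely random trees, Proto-NN and OptiNet. You should state and prove the result for that case rather than claim ``in either case''. Alternatively, you could treat data-dependent partitions of controlled combinatorial complexity, but that calls for a VC-type argument in the spirit of Lugosi and Nobel.

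In the independent setting, your route is a legitimate variant of the proof in \cite{devroye96probabilistic}, to which the paper defers. That proof centers at the population cell mean $\EE[g(X')\mid X'\in\mathcal V_n(x)]$, with $X'$ an independent copy, rather than at your empirical mean $\bar g_n$.
\begin{itemize}
\item Its bias term then only needs the $L^1$-contraction of cell averaging.
\item Its variance term uses that, given the partition and the membership indicators, the points in the cell are i.i.d.\ from $P^X(\cdot\mid\mathcal V_n(x))$, which yields a binomial representation.
\end{itemize}
Your centering makes the variance step valid for any $(X_1,\ldots,X_n)$-measurable partition, but it moves the whole difficulty into the bias. Both proofs rest on the same within-cell exchangeability, and both fail on the example above.
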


In the above, $L_1$ consistency is obtained by an easy adaptation of the proof of Theorem 6.1 in \cite{devroye96probabilistic} (considering bounded $Y$ instead of $Y\in \{0,1\}$). It is required for the number of points in the cell to diverge ($n P_n^X(\mathcal{V}_n(x)) \to \infty$) and the cell diameter to vanish ($\text{diam}(\mathcal{V}_n(x)) \to 0$). These two conditions illustrate a trade-off concerning the size of the cell that needs to be small enough but not too much. To effectively quantify this trade-off, we investigate a new result that extends the previous Theorem to the finite-sample setting that explicitly controls and quantifies these two competing effects. Two additional assumptions are needed. The first is the following sub-Gaussian assumption:

\begin{enumerate}[label=(E), wide=0.5em,  leftmargin=*]
\item \label{cond:epsilon} 
The random variable $\varepsilon  = Y - g(X)$ is sub-Gaussian conditionally on $X$ with parameter $\sigma^2$. That is, $\mathbb E [\varepsilon |X ]= 0$ and for all $\lambda \in \mathbb R$, $$ \mathbb E [ \exp( \lambda\varepsilon  ) |X ] \leq \exp\left( \frac{ \lambda ^2 \sigma^2}{ 2 }\right).$$
\end{enumerate}

This condition ensures that the regression noise $\varepsilon$ has light tails uniformly with respect to $X$. Note that this requirement accommodates general heteroscedastic structures: it allows $\varepsilon$ to depend on $X$ while ensuring integrability of all orders as well as an almost sure uniform bound on the conditional variance by $\sigma^2$. This represents a relaxation compared to standard frameworks that classically assume mutual independence between the error term and the predictors.

The second assumption is a Lipschitz regularity condition. For any subset $V\subset S_X$, we introduce the local Lipschitz constant $L(V)$ of $h:S_X\to \mathbb R$ as the infimal constant $L>0$ such that
$$| h(x) - h(y)| \leq L \Vert{}x-y\Vert{}_2 \quad \text{for all } (x,y) \in V^2.$$ 
A function $h:S_X\to \mathbb R$ is globally $L$-Lipschitz if $L=L(S_X)$. In this case, the local constant $L(V)$ satisfies $L(V)\leq L$ across any subset $V$. In what follows, for simplicity, we consider regression functions that are Lipschitz over the domain $S_X$ but the bounds are given with respect to the local Lipschitz constant $L(\mathcal V(x))$.

\begin{enumerate}[label=(L), wide=0.5em,  leftmargin=*]
  \item \label{cond:reg4} The function \( g: x \mapsto \mathbb E [ Y|X= x]\) is $L$-Lipschitz on \( S_X \). 
\end{enumerate}

Under these assumptions, we can now quantify the convergence rate of Theorem 6.1 in \cite{devroye96probabilistic} with the help of a non-asymptotic upper bound.

\begin{theorem}\label{sousgauss3}
Let $n\geq 1$, $d\geq 1$, $\delta\in (0,1/2)$ and $x\in S_X$. Assume that  \ref{cond:D}, \ref{cond:epsilon} and \ref{cond:reg4} are fulfilled. Let $\mathcal{V}$ be a partition of $S_X$ which is $(X_1,\ldots, X_n)$-measurable and let $\mathcal V(x)$ denote the cell containing $x$. We have with probability at least $1-2\delta$,
    $$ | \hat g_{\mathcal V}(x) - g(x)| \leq \sqrt{\dfrac{2\sigma^2 \log(1/\delta)}{n P_n^X(\mathcal{V}(x))} } + L(\mathcal{V}(x)) \diam(\mathcal{V}(x)).$$      
\end{theorem}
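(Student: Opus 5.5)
Our plan is to condition on the covariates $X_1,\ldots,X_n$, decompose the error into a variance part and a bias part, and treat each one conditionally. Write $V=\mathcal V(x)$ and $N = nP_n^X(V)=\sum_{i}\mathds 1_V(X_i)$. Since $\mathcal V$ is measurable with respect to $\sigma(X_1,\ldots,X_n)$, both $V$ and $N$ are fixed once we condition on the $X_i$. If $N=0$, the convention $0/0=0$ makes the variance term $\sqrt{2\sigma^2\log(1/\delta)/0}=+\infty$, so the bound holds trivially. We therefore restrict to the event $N\geq 1$. With $\varepsilon_i=Y_i-g(X_i)$ we get
$$\hat g_{\mathcal V}(x)-g(x)=\frac{1}{N}\sum_{i=1}^n \varepsilon_i\mathds 1_V(X_i)+\frac1N\sum_{i=1}^n \bigl(g(X_i)-g(x)\bigr)\mathds 1_V(X_i).$$

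The bias term is deterministic given the covariates. Every $X_i$ in the sum, together with $x$, lies in $V$. Hence $|g(X_i)-g(x)|\le L(V)\|X_i-x\|_2\le L(V)\diam(V)$, and averaging gives a bias of at most $L(V)\diam(V)$. Assumption \ref{cond:reg4} is used only to ensure $L(V)\le L<\infty$.

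For the variance term, we use \ref{cond:D}: conditionally on $(X_1,\ldots,X_n)$, the $\varepsilon_i$ are independent, and each is conditionally distributed as $\varepsilon$ given $X=X_i$. The point to check is that conditioning on all the covariates does not alter the law of $\varepsilon_i$ beyond conditioning on $X_i$; this holds because the pairs $(X_j,Y_j)$ are independent. By \ref{cond:epsilon}, for every $\lambda\in\mathbb R$,
$$\mathbb E\Bigl[\exp\Bigl(\lambda \tfrac1N\textstyle\sum_i \varepsilon_i\mathds 1_V(X_i)\Bigr)\Bigm| X_{1:n}\Bigr]\le \exp\Bigl(\tfrac{\lambda^2\sigma^2}{2N}\Bigr).$$
The Chernoff bound, applied to each tail separately, then gives
$$\mathbb P\Bigl(\Bigl|\tfrac1N\textstyle\sum_i\varepsilon_i\mathds 1_V(X_i)\Bigr|>t\Bigm| X_{1:n}\Bigr)\le 2\exp\bigl(-Nt^2/(2\sigma^2)\bigr).$$
We choose $t=\sqrt{2\sigma^2\log(1/\delta)/N}$, which makes the conditional failure probability at most $2\delta$.

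Taking expectation over the covariates removes the conditioning, because the threshold $t$ is itself $X_{1:n}$-measurable. Combining this with the bias bound through the triangle inequality yields the claimed inequality with probability at least $1-2\delta$. The only delicate step is the measurability and conditioning argument: the partition must be independent of the responses, or $X$-measurable, so that the cell $V$ and the count $N$ can be frozen while the noise remains conditionally independent and sub-Gaussian. Once that is secured, the rest is a routine sub-Gaussian tail bound.
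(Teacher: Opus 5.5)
Your proposal is correct and follows essentially the same route as the paper. Both arguments use the same variance/bias decomposition. Both condition on $X_1,\ldots,X_n$ to freeze the cell and the count, rely on conditional independence of the $\varepsilon_i$ (the paper's Lemma~\ref{lemmaA}), apply a sub-Gaussian tail bound to each tail, integrate out the covariates, and bound the bias by $L(\mathcal V(x))\diam(\mathcal V(x))$. The differences are cosmetic: the paper normalizes the noise sum by $\sqrt{N}$ (parameter $\sigma^2$) rather than by $N$ (parameter $\sigma^2/N$), and your explicit treatment of the empty-cell case $N=0$ covers a point the paper leaves implicit.
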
 
The first term on the right-hand side represents the variance, which scales inversely with the local sample size $n P_n^X(\mathcal{V}(x))$, while the second term captures the approximation bias, scaling linearly with the cell's diameter. We thus recover the asymptotic conditions of \cite{devroye96probabilistic} as a direct consequence of this explicit risk bound. Note that when the partition $\mathcal V$ is based on another independent source of randomness, the results remain valid. 

In what follows, we will need an alternative statement valid for a fixed partition $\mathcal{V}$ (which is no longer $(X_1, \dots, X_n)$-measurable) -- or also based on another independent source of randomness -- with $nP^X$ instead of the number of points $n P_n^X$. This is the object of the following corollary.
\begin{corollary}\label{sousgauss}
Let $n\geq 1$, $d\geq 1$, $\delta\in (0,1/3)$ and $x\in S_X$. Assume that  \ref{cond:D}, \ref{cond:epsilon} and \ref{cond:reg4} are fulfilled. Let $\mathcal{V}$ be a partition of $S_X$, with $\mathcal{V}(x) \in \mathcal{V}$ being the measurable set containing $x$. If $ nP^X(\mathcal{V}(x))  \geq 8 \log(1/\delta) $, we have with probability at least $1-3\delta$,
    $$ | \hat g_{\mathcal V}(x) - g(x)| \leq \sqrt{\dfrac{4\sigma^2 \log(1/\delta)}{n P^X(\mathcal{V}(x))} } + L(\mathcal{V}(x)) \diam(\mathcal{V}(x)).$$      
\end{corollary}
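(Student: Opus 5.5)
The plan is to combine Theorem \ref{sousgauss3} with a multiplicative Chernoff lower bound on the number of sample points falling in $\mathcal V(x)$. A fixed partition $\mathcal V$ (or one built from randomness independent of the sample) is trivially allowed in Theorem \ref{sousgauss3}, which is stated for $(X_1,\ldots,X_n)$-measurable partitions and noted to remain valid under independent auxiliary randomness. Theorem \ref{sousgauss3} therefore gives, with probability at least $1-2\delta$,
$$ | \hat g_{\mathcal V}(x) - g(x)| \leq \sqrt{\frac{2\sigma^2 \log(1/\delta)}{n P_n^X(\mathcal{V}(x))} } + L(\mathcal{V}(x)) \diam(\mathcal{V}(x)).$$
It then suffices to show that $nP_n^X(\mathcal V(x)) \ge \tfrac12 nP^X(\mathcal V(x))$ with probability at least $1-\delta$. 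Substituting this into the variance term turns $2\sigma^2/(nP_n^X)$ into at most $4\sigma^2/(nP^X)$. A union bound then gives total probability at least $1-3\delta$. If the partition carries independent randomness, I will argue conditionally on it and integrate.

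For the counting step, write $p=P^X(\mathcal V(x))$ and $N=nP_n^X(\mathcal V(x))\sim\mathrm{Bin}(n,p)$. The multiplicative Chernoff bound for the lower tail gives
$$\PP(N\le (1-t)np)\le \exp(-t^2 np/2).$$
Taking $t=1/2$ gives $\PP(N\le np/2)\le \exp(-np/8)$. The hypothesis $np\ge 8\log(1/\delta)$ bounds this by $\delta$, which is exactly where the constant $8$ in the statement comes from. On the complementary event we also have $N>0$, so the $0/0$ convention plays no role.

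One check concerns whether the event of Theorem \ref{sousgauss3} and the Chernoff event can be combined by a union bound. Both are events on the same probability space, and the inequality in Theorem \ref{sousgauss3} holds on its own event regardless of the value of $N$. The union bound therefore applies directly and no conditioning subtlety arises. I expect no real obstacle here: the only care needed is in the Chernoff constant, and the condition in the statement is precisely the one that makes it work.
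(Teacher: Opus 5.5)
Your proposal is correct and matches the paper's proof. The paper likewise invokes Theorem~\ref{sousgauss3} (probability at least $1-2\delta$), adds a multiplicative Chernoff lower-tail bound giving $nP_n^X(\mathcal V(x)) > nP^X(\mathcal V(x))/2$ with probability at least $1-\delta$ under $nP^X(\mathcal V(x))\ge 8\log(1/\delta)$, and combines the two by a union bound; the only cosmetic difference is that you fix $t=1/2$ directly, whereas the paper takes $\theta=\sqrt{2\log(1/\delta)/(n\mu)}$ and uses the hypothesis to ensure $\theta\le 1/2$.
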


Similar but different bounds are established in \cite{BPS}. They are valid for the supremum norm over $x\in S_X$ at the expense of a logarithmic factor and a Vapnik-type regularity assumption on the cells. 

To optimize the obtained bound in the previous corollary, we must balance the two terms $\diam(\mathcal{V}(x))$ and $n P^X(\mathcal{V}(x))$. A central further question in our analysis is whether these $\mathcal{V}(x)$ satisfy the shape regularity condition introduced in \cite{BPS}. This property ensures that the cells do not collapse disproportionately in certain dimensions. We restate the formal definition below for a general set $V\subset \mathbb R^d$ first, and then for a hyper-rectangle $A \subset S_X$, for which $h_-(A)$ and $h_+(A)$ denote the smallest and largest side lengths, respectively.

\begin{definition}\label{def:gamma_regular}\label{def:beta_regular}
For $\gamma>0 $, a set $ V $ is called $\gamma$-shape-regular ($\gamma$-SR) if
$\diam(V)^d \leq \gamma \lambda (V)$. For $\beta>0 $, a hyper-rectangle $ A$ is called $\beta$-shape-regular ($\beta$-SR) if 
$ h_+ (A)  \leq \beta h_-(A)$.
\end{definition}

When $ \mathcal V(x)$ is $\gamma$-SR, using the approximation that $P^X (\mathcal  V(x))\simeq \lambda (\mathcal  V(x)) $, the upper bound of Corollary \ref{sousgauss} becomes 
$$ \sqrt{\dfrac{4\sigma^2 \log(1/\delta)}{n \lambda(\mathcal{V}(x))} } + L(\mathcal{V}(x)) \, (\gamma \lambda (\mathcal{V}(x)))^{1/d} . $$
Therefore, the optimal rate of convergence can be obtained by allowing for a fine-tuning of $\lambda (\mathcal{V}(x))$. In contrast, if the factor $\gamma $ grows to infinity with the sample size, then the obtained upper bound fails to reach the optimal rate. For this reason, throughout the paper, we study several examples by comparing the SR factor obtained after a careful analysis of $\diam(\mathcal V(x))$ and  $P^X(\mathcal V(x))$.

\section{Purely random trees}\label{sec_PRT}

\subsection{Background}

In this Section, we introduce a minimal mass assumption, specifically tailored for the analysis of tree-based regression estimators. It imposes a uniform lower bound on the density over the unit hypercube.

\begin{enumerate}[label=(XTREE), wide=0.5em, leftmargin=*]\item \label{cond:density_XCART} The random variable $X$ admits a density function $f_X$ on $S_X = [0,1]^d$ which is bounded from below by $b >0$, i.e., $f_X(x) \geq b$ for all $x\in S_X$.\end{enumerate}
In this Section, we analyze partition-based estimators where $\mathcal{V}$ is constructed via a purely random tree process. In this framework, the splitting directions and positions are chosen according to a fixed distribution, independent of the response variables in the training set. More precisely, we consider purely random trees (PRT), that are built by successively refining a partition of the space, in a way that is independent of the initial sample $\{ (X,Y), (X_1,Y_1),\ldots, (X_n, Y_n) \} $. We will now analyze specific tree structures, starting with centered trees, followed by uniform trees, and finally Mondrian trees. Moreover, for clarity, we assume that $S_X = [0,1]^d$ and we always take $x \in S_X$.

To set up notations, let us describe a PRT locally around a point $x$. The tree is generated iteratively, and at each step $i$, for the cell $\mathcal V(x)$ containing $x$, a coordinate is selected according to a random variable $D_i\in \{ 1,\ldots,d\}$ and then the side of the cell in direction $D_i$, that we write $(a,b)$, $a<b$, is split into two intervals $(a,a+(b-a)S_i)$ and $(a+(b-a)S_i,b)$, thus defining two new cells $C_1$ and $C_2$. Consequently, each step $i$ consists in splitting a cell and depends on a pair of random variables $(D_i,S_i)$, that is independent of the dataset $\{(X,Y), (X_i,Y_i)_ {i=1,\ldots,n} \}$. After $N$ steps, we denote $\mathcal V(x)=\mathcal V (x, (D_i,S_i)_{i=1}^N)$. Moreover, we denote by $\bar{S}_i$ the length reduction of the side $D_i$ of the considered cell at step $i$, that is either equal to $S_i$ or $1-S_i$ according to the fact that the coordinate $x_{D_i}$ is smaller or greater than $a+(b-a)S_i$, respectively. Then, we have for all $i \in \{1,\dots, N\}$, $\min(S_i, 1-S_i) \leq \bar{S}_i \leq \max(S_i, 1-S_i)$. Note that for centered trees, $S_i=1/2$ almost surely, then $\bar{S}_i = 1/2$, and consequently the Lebesgue volume of the cell containing $x$ after $N$ steps is equal to $1/2^N$.

\subsection{Centered random trees}\label{subsec_centered}

Let us first provide some deviation bounds for the diameter of the cell $\mathcal{V}(x)$ built with centered random trees. Moreover, in the case of centered random trees, the volume of the cell $\mathcal{V}(x)$ after $N$ steps is simply $  \lambda(\mathcal{V}(x))= (1/2)^N$. The diameter of $\mathcal{V}(x)$ behaves as follows.

\begin{proposition}\label{prop:diam_CRT}
Let $d\geq 2$ be an integer. Consider that $S_i=1/2 $ almost surely and that $D_i$ are independent of each other and uniformly distributed over $\left\{ 1,\ldots,d\right\}$. Then, for $\mathcal V (x)= \mathcal V(x, (D_i,S_i)_{i=1}^N)$ and for any $\alpha \in (0,1/d)$,
\begin{equation*}
    \PP(\diam(\mathcal{V}(x))\geq \sqrt{d}2^{-\alpha N}) \leq d\left(1-\frac{1-\theta}{d}\right)^N \theta^{-\alpha N},
\end{equation*}
where $\theta=(d-1)\alpha/(1-\alpha).$ Moreover, for any $\alpha \in (1/d,1)$, we have for the same $\theta$
\begin{equation*}
    \PP(\diam(\mathcal{V}(x))\leq \sqrt{d}2^{-\alpha N}) \leq d\left(1-\frac{1-\theta}{d}\right)^N \theta^{-\alpha N}.
\end{equation*}
\end{proposition}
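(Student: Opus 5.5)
The plan is to reduce the diameter event to a statement about the split counts along each coordinate, then bound that with a union bound and a Chernoff-type exponential moment. For centered trees, if $K_j = \sum_{i=1}^N \mathds 1_{\{D_i=j\}}$ is the number of splits in direction $j$, the side length of $\mathcal V(x)$ in direction $j$ is exactly $2^{-K_j}$. Hence
$$ \diam(\mathcal V(x))^2=\sum_{j=1}^d 2^{-2K_j}, \qquad 2^{-\min_j K_j}\le \diam(\mathcal V(x))\le \sqrt d\, 2^{-\min_j K_j}. $$
Each $K_j$ is Binomial$(N,1/d)$.

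\emph{First claim.} If $\diam(\mathcal V(x))\ge \sqrt d\,2^{-\alpha N}$, the upper bound forces $\min_j K_j\le \alpha N$. A union bound over the $d$ coordinates then gives
$$ \PP(\diam(\mathcal V(x))\ge \sqrt d\, 2^{-\alpha N})\le d\,\PP(K_1\le \alpha N). $$
For the lower tail I use Markov's inequality with a parameter $\theta\in(0,1)$:
$$ \PP(K_1\le \alpha N)=\PP(\theta^{K_1}\ge \theta^{\alpha N})\le \theta^{-\alpha N}\,\EE[\theta^{K_1}]=\theta^{-\alpha N}\Bigl(1-\tfrac{1-\theta}{d}\Bigr)^N. $$
This is exactly the stated bound. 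The choice $\theta=(d-1)\alpha/(1-\alpha)$ is the Chernoff optimizer, and it lies in $(0,1)$ precisely when $\alpha<1/d$. In any case, the inequality holds for every $\theta\in(0,1)$, so the specific value only matters for tightness.

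\emph{Second claim.} Here $\alpha\in(1/d,1)$, so $\theta>1$, and I use the same inequality in the upper-tail direction. Suppose $\diam(\mathcal V(x))\le \sqrt d\, 2^{-\alpha N}$. Since $2^{-\min_j K_j}\le\diam(\mathcal V(x))$, this gives $\min_j K_j\ge \alpha N-\tfrac12\log_2 d$.

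The main obstacle is this $\sqrt d$ slack: as stated, the bound does not directly yield $\min_j K_j\ge\alpha N$. I would first try to exploit the sum structure instead of only the maximal side. The event requires $\sum_j 2^{-2K_j}\le d\,2^{-2\alpha N}$, hence at least one index with $K_j\ge \alpha N$. Taking the coordinate attaining $\max_j K_j$ then gives
$$ \PP\bigl(\diam(\mathcal V(x))\le\sqrt d\,2^{-\alpha N}\bigr)\le \PP\bigl(\max_j K_j\ge\alpha N\bigr)\le d\,\PP(K_1\ge \alpha N). $$
For $\theta>1$, Markov's inequality gives
$$ \PP(K_1\ge\alpha N)=\PP(\theta^{K_1}\ge\theta^{\alpha N})\le \theta^{-\alpha N}\EE[\theta^{K_1}]=\theta^{-\alpha N}\Bigl(1-\tfrac{1-\theta}{d}\Bigr)^N, $$
which is the displayed bound. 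The step to double-check is the implication from the diameter event to $\max_j K_j\ge\alpha N$. It holds because if every $K_j<\alpha N$, then every $2^{-2K_j}>2^{-2\alpha N}$, so $\sum_j 2^{-2K_j}>d\,2^{-2\alpha N}$, contradicting the event. Symmetrically, in the first claim the event $\sum_j 2^{-2K_j}\ge d\,2^{-2\alpha N}$ forces some $K_j\le\alpha N$, which confirms the reduction used there.
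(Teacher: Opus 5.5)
Your proof is correct and follows the paper's own route. Both tails are reduced by a union bound over coordinates to the single side length $h_1=2^{-K_1}$, via $\diam\ge t\Rightarrow\max_j h_j\ge t/\sqrt d$ and $\diam\le t\Rightarrow\min_j h_j\le t/\sqrt d$, and then handled with Markov's inequality on an exponential moment of $K_1$; the paper writes that moment as $2^{\mp\lambda K_1}$ and optimizes in $\lambda$, obtaining $2^{\mp\lambda_0}=\theta$, which is exactly your direct choice $\theta^{K_1}$.
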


We have the following Proposition about the shape-regularity of centered random trees.

\begin{proposition}\label{cor22}
When the number of splits goes to infinity, it holds that, almost surely, there exists $N_0\geq 1$ such that for all $N\geq N_0$,
$$ \sqrt{d} 2^{-N/d-2\sqrt{(d-1)N\log(N)/d^2}} \leq \diam(\mathcal{V}(x)) \leq \sqrt d 2^{-N/d+2\sqrt{(d-1)N\log(N)/d^2}}.$$
In addition, if we denote the normalized diameter by $\diam^{\#}(\mathcal{V}(x)) := \diam(\mathcal{V}(x))/\sqrt{d}$, almost surely it holds, for $N$ large enough,
\begin{equation*}
  2^{\, -2\sqrt{(d-1)N\log(N)}}   \leq \dfrac{\diam^{\#}(\mathcal{V}(x))^d}{\lambda(\mathcal{V}(x))} \leq 2^{ \, 2\sqrt{(d-1)N\log(N)}}. 
\end{equation*}
\end{proposition}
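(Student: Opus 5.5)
For the centered tree, the cell $\mathcal V(x)$ after $N$ splits has side lengths $2^{-K_k}$, $k=1,\dots,d$, where $K_k=\sum_{i=1}^N \ind\{D_i=k\}$. The vector $(K_1,\dots,K_d)$ is multinomial with parameters $N$ and $(1/d,\dots,1/d)$. Hence
\[
\diam(\mathcal V(x))=\Bigl(\sum_{k=1}^d 4^{-K_k}\Bigr)^{1/2},
\qquad
2^{-\min_k K_k}\le \diam(\mathcal V(x))\le \sqrt d\, 2^{-\min_k K_k}.
\]
The whole statement therefore reduces to an almost sure law-of-the-iterated-logarithm type control of $\min_k K_k - N/d$. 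I would not route through Proposition \ref{prop:diam_CRT} with a varying $\alpha$, since it is less transparent. Instead I would apply Hoeffding's inequality directly to each $K_k$, a sum of $N$ i.i.d. Bernoulli$(1/d)$ variables. Taking a deviation $t_N=c\sqrt{N\log N}$ with $c$ large enough, for instance $c^2>1$, gives
\[
\PP(|K_k-N/d|\ge t_N)\le 2\exp(-2t_N^2/N)=2N^{-2c^2},
\]
which is summable in $N$. By a union bound over $k$ and Borel--Cantelli, almost surely for all $N\ge N_0$ we have $|K_k-N/d|\le t_N$ for every $k$.

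The constant must match the claimed exponent $2\sqrt{(d-1)N\log N/d^2}$. The variance of $K_k$ is $N(d-1)/d^2$, so a Bernstein or Chernoff bound (rather than Hoeffding) gives tail $\exp\bigl(-t^2/(2N(d-1)/d^2+\dots)\bigr)$. Taking $t_N=2\sqrt{(d-1)N\log N/d^2}$ then makes the exponent about $2\log N$, which is summable. I would use the Bernstein form so that the constant $2$ comes out exactly. The lower-order term $2t/3$ in the Bernstein denominator is negligible because $t_N=o(N)$. This handles all $N$ large simultaneously, since Borel--Cantelli is applied over the sequence $N$ along a single infinite sequence $(D_i)$.

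On this almost sure event, $\min_k K_k\ge N/d-t_N$. The upper bound $\diam\le\sqrt d\,2^{-N/d+t_N}$ follows directly. For the lower bound, $\diam\ge 2^{-\min_k K_k}$ is off by a factor $\sqrt d$. I would instead use $\diam^2=\sum_k 4^{-K_k}\ge d\cdot 4^{-\max_k K_k}$ together with $\max_k K_k\le N/d+t_N$, which gives $\diam\ge\sqrt d\,2^{-N/d-t_N}$ as claimed. For the second display, divide $\diam^{\#}(\mathcal V(x))^d\in\bigl[2^{-N-dt_N},\,2^{-N+dt_N}\bigr]$ by $\lambda(\mathcal V(x))=2^{-N}$. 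This gives $2^{\pm d t_N}=2^{\pm 2\sqrt{(d-1)N\log N}}$, matching the statement.

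The main obstacle is getting the sharp constant in the concentration step, namely choosing Bernstein over Hoeffding and checking that the resulting tail is summable. The remaining steps are bookkeeping.
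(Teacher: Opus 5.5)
Your proof is correct and is essentially the paper's argument. The paper also reduces to a Chernoff-type tail bound on the binomial counts $K_k$, via Proposition~\ref{prop:diam_CRT} with $\alpha_N = 1/d \pm \omega_N$ and a second-order Taylor expansion; its leading exponent $a_d N\omega_N^2/2$, with $a_d = d^2/(d-1)$, is exactly your Bernstein exponent $t_N^2/(2N(d-1)/d^2)$ for $t_N = N\omega_N$, and it concludes with the same $\sim N^{-2}$ tail and Borel--Cantelli; using Bernstein directly, together with $\min_k K_k$ and $\max_k K_k$ for the two diameter bounds, simply bypasses the Taylor expansion.
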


The previous results are valid in any dimension $d\geq 1$, but in dimension one, the (normalized) diameter of any cell is always equal to its Lebesgue volume, so we always have $\diam(\mathcal{V}(x))=\diam^{\#}(\mathcal{V}(x))=\lambda(\mathcal{V}(x))$.

We deduce  the following high probability upper bound on the pointwise error of the resulting regression estimator. 

\begin{theorem}\label{cor_UcR}
Let  $n\geq 1$, $d\geq 1$ and $x\in S_X$. Assume that the integer $N$ is such that $N=d\log(n)/((d+2)\log(2))$. Suppose that $\mathcal V (x)= \mathcal V(x, (D_i,S_i)_{i=1}^N)$ is obtained from a centered random tree as described in Proposition \ref{prop:diam_CRT}. Under \ref{cond:D}, \ref{cond:epsilon}, \ref{cond:reg4} and \ref{cond:density_XCART}, there exists $C>0$, that only depends on the parameters of the problem but not on $n$, such that with probability $1$, there is $n_0$ such that for all $ n \geq n_0$,
\[
 | \hat g_{\mathcal V}(x) - g(x)| \leq
 C \,  n^{-1/(d+2)}e^{2\sqrt{\log(n)\log\log (n)}}.
\]
\end{theorem}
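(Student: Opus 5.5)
The plan is to combine Corollary \ref{sousgauss} with the almost sure diameter control of Proposition \ref{cor22}. The number of splits $N=d\log(n)/((d+2)\log 2)$ grows with $n$, so the event in Proposition \ref{cor22} applies for $n$ large. Condition $\log(1/\delta)$ will be tuned as $\delta=\delta_n=n^{-2}$, so that Borel--Cantelli converts high-probability bounds into an almost sure eventual statement.

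\emph{Volume and mass of the cell.} For centered trees $\lambda(\mathcal V(x))=2^{-N}=n^{-d/(d+2)}$. Under \ref{cond:density_XCART} this gives
\[
P^X(\mathcal V(x))\geq b\,n^{-d/(d+2)}, \qquad nP^X(\mathcal V(x))\geq b\,n^{2/(d+2)}.
\]
In particular $nP^X(\mathcal V(x))\geq 8\log(1/\delta_n)=16\log n$ for $n$ large. Since the tree randomness is independent of the sample, Corollary \ref{sousgauss} applies conditionally on $(D_i)$. With probability at least $1-3n^{-2}$ we then get
\[
|\hat g_{\mathcal V}(x)-g(x)|\leq \sqrt{8\sigma^2\log n/(b\,n^{2/(d+2)})}+L\,\diam(\mathcal V(x)).
\]
The variance term is $O(n^{-1/(d+2)}\sqrt{\log n})$, and $\sqrt{\log n}\le e^{2\sqrt{\log n\log\log n}}$.

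\emph{Bias term.} By Proposition \ref{cor22}, almost surely, for $n$ large,
\[
\diam(\mathcal V(x))\le \sqrt d\,2^{-N/d}\,2^{2\sqrt{(d-1)N\log N/d^2}}.
\]
Here $2^{-N/d}=n^{-1/(d+2)}$. For the correction, $N\le \log n/\log 2$, so
\[
2^{2\sqrt{(d-1)N\log N}/d}=\exp\!\Big(2\log 2\,\sqrt{(d-1)N\log N}/d\Big),
\]
and $\log 2\sqrt{(d-1)N\log N}/d\le \sqrt{\log 2}\sqrt{(d-1)/d^2}\sqrt{\log n\,\log\log n}\,(1+o(1))\le\sqrt{\log n\log\log n}$. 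This yields the factor $e^{2\sqrt{\log n\log\log n}}$.

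\emph{Almost sure statement.} The probabilities $3n^{-2}$ are summable, so by Borel--Cantelli the deviation bound holds for all $n\ge n_0$ almost surely. This requires care: the tree must be defined jointly across $n$, with a single sequence $(D_i)_{i\ge1}$ used with the first $N(n)$ splits. That is needed to invoke the almost sure Proposition \ref{cor22}. I will intersect the two almost sure events.

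\emph{Remaining details.} I expect the main obstacle to be bookkeeping rather than depth: making $N$ an integer via a floor, which changes constants only; handling $d=1$, where the diameter equals the volume; and checking that the Proposition \ref{cor22} correction, rewritten in $n$, is indeed dominated by $e^{2\sqrt{\log n\log\log n}}$ with a constant $C$ independent of $n$.
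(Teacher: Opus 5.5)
Your proposal is correct and follows essentially the same route as the paper: Corollary \ref{sousgauss} with $\delta=n^{-2}$, the deterministic volume $2^{-N}=n^{-d/(d+2)}$ combined with \ref{cond:density_XCART} to control the variance term, Borel--Cantelli over $n$, and the almost sure diameter bound of Proposition \ref{cor22}, converted into the factor $e^{2\sqrt{\log n\log\log n}}$ via $N\lesssim \log n$. Your exponent bookkeeping is slightly tighter than the paper's, which simply bounds $2^{(\cdot)}$ by $e^{(\cdot)}$ and uses $2\sqrt{d-1}/d\le 1$; the nested coupling of the splits across $n$ is a convenient setup but not strictly needed, because the first Borel--Cantelli lemma only requires the events to live on a common probability space.
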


From Theorem \ref{cor_UcR}, we see that the estimator based on the centered random partition achieves a pointwise estimation error that is close to the minimax rate $n^{-1/(d+2)}$ for the error in expectation, in the sense that for any $\varepsilon>0$, almost surely, for $n$ large enough, the estimation error is smaller than $n^{-1/(d+2)+\varepsilon}$.

To conclude our analysis of centered trees, we also include the following negative result, which establishes that centered trees are not shape-regular, as suggested by the sub-optimality of the convergence rate obtained in Theorem \ref{cor_UcR}. 

\begin{proposition}\label{centered tree not regular}
Let $d \geq 2$. Centered trees are not $\beta$-SR, i.e.,
for any \( N \geq d \) and any hyper-rectangle $\mathcal V (x)= \mathcal V(x, (D_i,S_i)_{i=1}^N)$ obtained from a centered random tree, as described in Proposition \ref{prop:diam_CRT}, we have, with probability at least $1/14$,
$$\dfrac{h_+(\mathcal V (x))}{h_-(\mathcal V (x))} \geq 2^{\sqrt{N/d}}.$$
\end{proposition}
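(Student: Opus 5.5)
For a centered tree, write $K_j=\sum_{i=1}^N \mathds 1_{\{D_i=j\}}$ for the number of splits in direction $j$. Every split halves the corresponding side, so side $j$ of $\mathcal V(x)$ has length $2^{-K_j}$. Hence
$$\frac{h_+(\mathcal V(x))}{h_-(\mathcal V(x))}=2^{\max_j K_j-\min_j K_j}\ \ge\ 2^{|K_1-K_2|}.$$
It therefore suffices to show that $\PP\big(|K_1-K_2|\ge \sqrt{N/d}\big)\ge 1/14$ for all $N\ge d$, $d\ge 2$. The vector $(K_1,\ldots,K_d)$ is multinomial$(N;1/d,\ldots,1/d)$, and the aspect ratio does not depend on $x$.

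Next I will condition on $M=K_1+K_2\sim\mathrm{Bin}(N,2/d)$. Given $M=m$, $K_1\sim \mathrm{Bin}(m,1/2)$ and $K_1-K_2=2K_1-m$ is a symmetric simple random walk at time $m$. Two ingredients are needed.
\begin{enumerate}
\item[(a)] $M$ is not too small with constant probability. Since $\EE M=2N/d\ge 2$, I will show $\PP(M\ge \EE M/2)\ge c_1$ for an explicit $c_1$. Possible routes are a Paley--Zygmund bound, $\PP(M> \EE M/2)\ge \tfrac14 (\EE M)^2/\EE M^2$ with $\EE M^2=\mathrm{Var}\,M+(\EE M)^2\le \tfrac32(\EE M)^2$ when $\EE M\ge 2$, which gives $c_1\ge 1/6$; or a direct binomial median argument, since the median of $M$ is at least $\lfloor 2N/d\rfloor$.
\item[(b)] A uniform anti-concentration bound for the random walk: $\PP(|2K_1-m|\ge \sqrt{m/2})\ge c_2$ for all $m\ge 1$. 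Paley--Zygmund applied to $Z=(2K_1-m)^2$ gives this, because $\EE Z=m$ and $\EE Z^2=3m^2-2m\le 3m^2$, so $\PP(Z\ge m/2)\ge \tfrac14\cdot\tfrac13=\tfrac1{12}$.
\end{enumerate}

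On the event $\{M\ge N/d\}$, (b) yields $|K_1-K_2|\ge\sqrt{M/2}\ge\sqrt{N/(2d)}$. This is off by a factor $\sqrt2$, so the thresholds must be tuned. I will use $\PP(Z\ge \theta m)\ge (1-\theta)^2/3$ together with $M\ge (1-\eta)\,2N/d$, and choose $\theta(1-\eta)\cdot 2\ge 1$, for instance $\theta=2/3$ and $\eta=1/4$. That choice gives $\PP(Z\ge \tfrac23 m)\ge 1/27$. Multiplying by $\PP(M\ge \tfrac32 N/d)$ does not reach $1/14$, so I will instead use the exact small-$m$ structure or the central limit theorem. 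I expect the final constant $1/14$ to come from a sharper comparison. One option is $\PP(|2K_1-m|\ge \sqrt{m})\ge$ something like $0.3$ uniformly (a Berry--Esseen-type bound, or an explicit check for small $m$), combined with $\PP(M\ge N/d)\ge 1/4$.

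An alternative that may be cleaner is to avoid conditioning altogether. I would apply Paley--Zygmund directly to $W=(K_1-K_2)^2$, which has $\EE W=2N/d$. Computing $\EE W^2$ through the fourth moment of a sum of i.i.d. variables $\xi_i\in\{1,-1,0\}$ with probabilities $1/d,1/d,1-2/d$ gives $\EE W^2=3(2N/d)^2+N(2/d)(1-3\cdot 2/d)$, which is at most $3.5\,(\EE W)^2$ when $N\ge d$ (for $d=2$ the correction term is negative). Then
$$\PP(W\ge N/d)=\PP(W\ge \tfrac12\EE W)\ge \tfrac14\cdot\frac{(\EE W)^2}{\EE W^2}\ge \tfrac14\cdot\tfrac{1}{3.5}=\tfrac1{14},$$
which exactly matches the announced constant. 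This confirms the route: the key step is the fourth-moment computation and verifying $\EE W^2\le \tfrac72(\EE W)^2$ using $N\ge d$. That verification is the main obstacle, though it is routine.
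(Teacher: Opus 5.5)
Your final route is correct and is essentially the paper's argument: Paley--Zygmund with $\theta=1/2$ applied to $(K_1-K_2)^2$, with the fourth moment of a centered i.i.d.\ sum giving $\EE W^2/(\EE W)^2 = 3+(d-6)/(2N)\le 7/2$ for $N\ge d$, and hence the constant $1/14$. The paper works with $Z_{k,j}=\log(2)\sum_i(B_i^{(k)}-B_i^{(j)})$, which differs from your $K_k-K_j$ only by the factor $\log 2$, and your conditioning-on-$M$ detour is unnecessary and can be dropped.
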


While, in the above, the value $1/14$ can certainly be improved, we stress that our result implies that shape-regularity fails to happen on an event having positive probability (independent of $n$).

\subsection{Uniform random trees}\label{subsec_URT}

In the context of purely random trees, the exact position of the target point $x \in S_X$ within the current cell is unknown and complex to track dynamically. Consequently, at any given split step $i$, we cannot determine almost surely whether $x$ falls into the left or the right child cell. This means the exact value of the relative reduction $\bar{S}_i$ (which takes either the value $S_i$ or $1-S_i$) remains intractable. However, the almost sure bounds $\min(S_i, 1-S_i) \leq \bar{S}_i \leq \max(S_i, 1-S_i)$ provide a powerful geometric safeguard: they allow us to uniformly bound the volume and diameter reductions of the cell $\mathcal{V}(x)$ across all steps $i$, for any $x$, without requiring any knowledge of its spatial path through the tree.

\begin{proposition}\label{prop:diamURT2}
Consider  that $S_i$ are independent and uniformly distributed over $(0,1)$ and that $D_i$ are independent of each other and from the $S_i$'s and uniformly distributed over $\left\{ 1,\ldots,d\right\}$. Then, for $\mathcal V (x)= \mathcal V(x, (D_i,S_i)_{i=1}^N)$ and for any $\theta \geq 0$,
\begin{equation*}\label{eq_dev_diam_URT}
    \PP(\diam (\mathcal{V}(x))\geq \sqrt{d}e^{-N(1-\log(2))/d+N\theta}) \leq de^{-Nd\theta^2/2}.
\end{equation*} 
Moreover, for all $\theta \in (0,6/d)$ we have,
\begin{equation*}
    \PP(\diam (\mathcal{V}(x))\leq \sqrt{d}e^{-N(1+\log(2))/d - N\theta}) \leq de^{-Nd\theta^2/24}.
\end{equation*} 
\end{proposition}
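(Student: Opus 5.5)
We reduce the diameter to a maximum over coordinates, and each side length to a product of independent factors. Write the side length of $\mathcal V(x)$ in direction $k$ as
$$L_k=\prod_{i\le N:\,D_i=k}\bar S_i .$$
Since $\bar S_i\in\{S_i,1-S_i\}$ and $S_i\sim U(0,1)$ is independent of $D_i$, the uniform distribution is symmetric under $s\mapsto 1-s$. So conditionally on the past, $\bar S_i$ is again uniform on $(0,1)$, whatever side $x$ falls on. The direction is chosen blindly, so the terms $\xi_i:=-\log \bar S_i\,\mathds 1_{D_i=k}$ are i.i.d. Each $-\log\bar S_i$ is an Exp$(1)$ variable independent of $\mathds 1_{D_i=k}\sim$ Bernoulli$(1/d)$. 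If one prefers to avoid this symmetry argument, one can use the safeguard $\min(S_i,1-S_i)\le\bar S_i\le\max(S_i,1-S_i)$ and work with $-\log\max(S_i,1-S_i)$ and $-\log\min(S_i,1-S_i)$, whose means are $1-\log 2$ and $1+\log 2$. These are exactly the constants appearing in the statement, so I would follow that route.

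\textbf{Upper bound on the diameter.} Since $\diam(\mathcal V(x))\le\sqrt d\,\max_k L_k$, a union bound over the $d$ coordinates gives
$$\PP\bigl(\diam\ge \sqrt d\, e^{-t}\bigr)\le d\,\PP\bigl(L_1\ge e^{-t}\bigr).$$
We also have $L_1\le\prod_{i} M_i^{\mathds 1_{D_i=1}}$ with $M_i=\max(S_i,1-S_i)$, which is uniform on $(1/2,1)$, and $\EE[-\log M_i]=1-\log 2$. Set
$$Z_i=-\mathds 1_{D_i=1}\log M_i\ge0,\qquad \EE Z_i=(1-\log2)/d .$$
We need the lower-tail bound $\PP\bigl(\sum_i Z_i\le N(\EE Z_1-\theta)\bigr)\le e^{-Nd\theta^2/2}$. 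Nonnegative variables have sub-Gaussian lower tails with variance proxy $\EE Z^2$, via $e^{-\lambda z}\le 1-\lambda z+\lambda^2z^2/2$ for $z\ge0$. Here
$$\EE Z_i^2=\frac1d\,\EE\bigl[\log^2 M\bigr]\le \frac{\log^2 2}{d}\le\frac1d .$$
Chernoff then yields $\exp\bigl(-N\theta^2/(2\,\EE Z^2)\bigr)\le e^{-Nd\theta^2/2}$, which is the first claim.

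\textbf{Lower bound on the diameter.} Here $\diam\ge\sqrt d\,\min_k L_k$, so we union bound on the events $\{L_k\le e^{-t}\}$. We use $L_1\ge\prod_i m_i^{\mathds 1_{D_i=1}}$ with $m_i=\min(S_i,1-S_i)$, which is uniform on $(0,1/2)$. Then $W_i=-\mathds 1_{D_i=1}\log m_i$ has mean $(1+\log 2)/d$ and an exponential-type upper tail: $-\log m_i-\log 2$ is Exp$(1)$.

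This is the main obstacle: the deviation is upward for an unbounded variable, so a Bernstein-type bound is needed. The moment generating function is explicit:
$$\EE\bigl[e^{\lambda W}\bigr]=1-\frac1d+\frac1d\cdot\frac{2^{\lambda}}{1-\lambda},\qquad \lambda<1 .$$
Using $\log(1+u)\le u$, I would bound
$$\log\EE\bigl[e^{\lambda(W-\EE W)}\bigr]\le\frac1d\Bigl(\frac{2^\lambda}{1-\lambda}-1-\lambda(1+\log2)\Bigr)\le\frac{c\lambda^2}{d}$$
for $\lambda\le 1/2$, with $c$ around $6$ obtained by bounding the second derivative on $[0,1/2]$. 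Optimizing with $\lambda=d\theta/(2c)$ gives $e^{-Nd\theta^2/(4c)}=e^{-Nd\theta^2/24}$. The constraint $\lambda\le1/2$ translates into $\theta\le c/d=6/d$, matching the stated range $\theta\in(0,6/d)$. The fiddly part is checking the constant $c$ numerically.
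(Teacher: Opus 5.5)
Your architecture matches the paper's. You use a union bound over the $d$ coordinates (via $\sqrt d\min_k L_k\le\diam(\mathcal V(x))\le\sqrt d\max_k L_k$), the sandwich $m_i\le\bar S_i\le M_i$ with $m_i\sim U(0,1/2)$ and $M_i\sim U(1/2,1)$, and Chernoff bounds. For the lower bound you even use the same exact moment generating function $1-1/d+2^\lambda/(d(1-\lambda))$, the same step $1+u\le e^u$, and the same choice $\lambda=d\theta/12$.

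For the upper bound your route differs mildly. The paper computes $\EE[M_1^{\lambda B_1^{(1)}}]=1-1/d+(2-2^{-\lambda})/(d(1+\lambda))$ and bounds it with $2^{-\lambda}\ge 1-\lambda\log 2$ and $(1+\lambda)^{-1}\ge 1-\lambda$, obtaining $e^{-Nd\theta^2/(4(1-\log 2))}$. You instead use the generic sub-Gaussian lower tail of nonnegative summands with variance proxy $\EE Z^2\le(\log 2)^2/d$. That argument is correct for every $\theta\ge 0$ and is, if anything, cleaner.

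Two points need fixing.

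\textbf{The constant $c\le 6$.} The step you flag as fiddly does not go through as you describe it. Set $\phi(\lambda)=2^\lambda/(1-\lambda)-1-(1+\log 2)\lambda$. Then $\phi(0)=\phi'(0)=0$, but $\phi''(1/2)=2\sqrt2\,[(2+\log 2)^2+4]\approx 31.8$. A Taylor bound via $\sup_{[0,1/2]}\phi''$ therefore only yields $c\approx 16$, i.e.\ a bound of roughly $e^{-Nd\theta^2/64}$ on the range $\theta<16/d$, which is not the statement. The inequality $\phi(\lambda)\le 6\lambda^2$ on $(0,1/2)$ is nevertheless true, and there are two easy ways to prove it:
\begin{itemize}
\item $2^\lambda/(1-\lambda)$ has positive Taylor coefficients, so $\phi(\lambda)/\lambda^2$ is increasing and is at most $4\phi(1/2)=8\sqrt2-6-2\log 2\approx 3.93$.
\item As in the paper, $2^\lambda\le 1+\lambda\log 2+\lambda^2$ and $(1-\lambda)^{-1}\le 1+2\lambda$ on $(0,1/2)$ give $\phi(\lambda)\le(2C+1)\lambda^2+2\lambda^3\le(2C+2)\lambda^2\le 6\lambda^2$, with $C=1+\log 2$.
\end{itemize}

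\textbf{The symmetry argument.} Delete the symmetry argument of your first paragraph: it is false. Which child contains $x$ depends on $S_i$ itself, so $\bar S_i$ is biased toward the larger piece rather than uniform. For instance, if $x$ sits at the midpoint of the side being cut, then $\bar S_i=\max(S_i,1-S_i)$ almost surely. Hence the $\xi_i$ are not i.i.d.\ $\mathrm{Exp}(1)$ variables. This is precisely why one must go through the sandwich bounds, and it is where the constants $1\pm\log 2$ come from. Since you did not actually use the symmetry claim, your proof is unaffected once the constant $c\le 6$ is justified properly.
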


\begin{proposition}\label{prop:diam_vol_URT2}
Consider  that $S_i$ are independent and uniformly distributed over $(0,1)$ and that $D_i$ are independent of each other and from the $S_i$'s and uniformly distributed over $\left\{ 1,\ldots,d\right\}$. Then, for $\mathcal V (x)= \mathcal V(x, (D_i,S_i)_{i=1}^N)$ and for any $\alpha >1$,
\begin{equation*}\label{eq_dev_vol_URT}
\PP\left(\lambda(\mathcal V(x))\leq e^{-N(\alpha + \log(2))}\right)\leq (\alpha e^{1-\alpha})^N.
\end{equation*}
In addition, for any $\alpha \in (0,1)$,
\begin{equation*}
 \mathbb{P}\left(\lambda(\mathcal{V}(x)) \geq e^{-N(\alpha - \log(2))}\right) \leq (\alpha e^{1 - \alpha})^N.
\end{equation*}
\end{proposition}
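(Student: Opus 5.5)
The volume of the cell containing $x$ after $N$ splits is exactly multiplicative:
$$\lambda(\mathcal V(x))=\prod_{i=1}^N \bar S_i ,$$
since each split multiplies the length of side $D_i$ by $\bar S_i$ and leaves the other sides unchanged. Writing $-\log\lambda(\mathcal V(x))=\sum_{i=1}^N(-\log \bar S_i)$, both inequalities become deviation bounds for a sum of $N$ nonnegative terms. We handle them with the Chernoff (exponential Markov) method.

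The main difficulty is that $\bar S_i\in\{S_i,1-S_i\}$ depends on where $x$ sits in the current cell. So the $\bar S_i$ are not obviously i.i.d. uniform, and their joint law is intractable. We avoid this, as in the discussion preceding Proposition \ref{prop:diamURT2}, through the almost sure sandwich
$$M_i:=\min(S_i,1-S_i)\le \bar S_i\le \max(S_i,1-S_i)=1-M_i .$$
Because the $S_i$ are i.i.d. uniform on $(0,1)$, the $M_i$ are i.i.d. uniform on $(0,1/2)$. In distribution $M_i=U_i/2$ with $U_i$ i.i.d. uniform on $(0,1)$, and $1-M_i$ is uniform on $(1/2,1)$.

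\textbf{First inequality (small volume).} The event $\{\lambda(\mathcal V(x))\le e^{-N(\alpha+\log 2)}\}$ is contained in
$$\Big\{\prod_i M_i\le e^{-N(\alpha+\log 2)}\Big\}=\Big\{\sum_i(-\log U_i)\ge N\alpha\Big\}.$$
The variables $E_i=-\log U_i$ are i.i.d. standard exponential, with $\EE e^{tE_i}=1/(1-t)$ for $t<1$. Chernoff gives
$$\PP\Big(\sum_i E_i\ge N\alpha\Big)\le \big(e^{-t\alpha}/(1-t)\big)^N .$$
Taking the optimal $t=1-1/\alpha\in(0,1)$, valid since $\alpha>1$, yields $(\alpha e^{1-\alpha})^N$.

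\textbf{Second inequality (large volume).} The sandwich gives $\lambda(\mathcal V(x))\le\prod_i(1-M_i)$, but this upper bound is too crude to produce the stated rate. So we instead use the exact marginal law of $\bar S_i$. Conditionally on the past splits $\mathcal F_{i-1}$ and on $D_i$, the side of the current cell in direction $D_i$ is a fixed interval $(a,b)$ containing $x_{D_i}$. The variable $S_i$ is independent of $\mathcal F_{i-1}$ and $D_i$, and the map $s\mapsto\bar S_i$ equals $s$ on $\{s>(x_{D_i}-a)/(b-a)\}$ and $1-s$ on the complement.

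We would show that $\EE[\bar S_i^{-t}\mid\mathcal F_{i-1},D_i]\le 2^{t}\EE[U^{-t}]$ for $t\in(0,1)$. This holds because $\bar S_i\ge M_i=U/2$ in distribution, and conditioning does not change the law of $S_i$. With this, the Chernoff/martingale argument for the first inequality goes through by iterated conditioning, which handles the dependence of the $\bar S_i$ through $x$'s path.

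For the second inequality we need the reverse bound
$$\EE[\bar S_i^{\,t}\mid\mathcal F_{i-1},D_i]\le \EE[(1-M_i)^t]\le 2^{t}\EE[U^t]\cdot c ,$$
which is too crude in general. The cleanest route is to use $\lambda\le \prod_i 2(1-M_i)\cdot 2^{-N}$ together with the fact that $2(1-M_i)\in(1,2)$ and $2M_i$ is uniform. Taking the Chernoff bound on $\PP(\sum_i(-\log U_i)\le N\alpha)$ with $t<0$ yields the same optimal value $(\alpha e^{1-\alpha})^N$ for $\alpha\in(0,1)$.

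The main obstacle is justifying this step, namely that $\prod_i\bar S_i\ge e^{-N(\alpha-\log 2)}$ forces $\prod_i(2M_i)\ge e^{-N\alpha}$. We would try to establish it via the identity $\bar S_i(1-\bar S_i)=M_i(1-M_i)$, or failing that through a direct conditional moment bound $\EE[\bar S_i^{-t}]\le \EE[(2M_i)^{-t}]2^{-t}$ for $t<0$, applied martingale-wise.
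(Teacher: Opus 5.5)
Your first inequality is correct and is the paper's argument: the almost sure bound $\bar S_i \ge \min(S_i,1-S_i)$, independence of the $S_i$, and Chernoff with $t=1-1/\alpha$. The conditional-moment and martingale discussion you attach to it is unnecessary. The sandwich holds pointwise and the bounding variables are i.i.d., so the expectation factorizes directly.

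The second inequality has a genuine gap. The step you call the main obstacle is false: $\prod_i \bar S_i \ge e^{-N(\alpha-\log 2)}$ does not force $\prod_i (2M_i)\ge e^{-N\alpha}$. Take $N=1$, $\alpha=0.9$, $S_1=0.9$ and $x_{D_1}<0.9$. Then $\bar S_1=0.9\ge 2e^{-0.9}\approx 0.81$, but $2M_1=0.2<e^{-0.9}\approx 0.41$. A large volume comes from $\bar S_i$ being the larger piece, which is compatible with $M_i$ being tiny. The symmetric identity $\bar S_i(1-\bar S_i)=M_i(1-M_i)$ cannot tell the two cases apart, so it cannot rescue the step.

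The missing point is that the sandwich bound you dismissed as too crude is enough. We have $\lambda(\mathcal V(x))\le\prod_i(1-M_i)$, the $1-M_i$ are i.i.d. uniform on $(1/2,1)$, and for $s>0$,
\[
\EE\big[(1-M_i)^s\big]=\frac{2-2^{-s}}{1+s}\le\frac{2^s}{1+s},
\]
because $2^s+2^{-s}\ge 2$. So your unspecified constant $c$ can be taken equal to $1$. Markov's inequality then gives
\[
\PP\Big(\lambda(\mathcal V(x))\ge 2^N e^{-N\alpha}\Big)\le\Big(\frac{2^s}{1+s}\Big)^N 2^{-sN}e^{sN\alpha}=\Big(\frac{e^{s\alpha}}{1+s}\Big)^N ,
\]
and choosing $s=1/\alpha-1>0$ yields $(\alpha e^{1-\alpha})^N$. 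This is exactly the paper's proof, and it is the lower-tail Chernoff bound for a sum of exponentials that you were aiming for.

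Your ``fallback'' bound $\EE[\bar S_i^{s}]\le 2^{s}\,\EE[(2M_i)^{s}]$ is this same inequality, and it follows at once from $\bar S_i\le 1-M_i$. You left it unproved, though, and called the equivalent bound too crude in the preceding paragraph. As written, the second half of your proposal lists candidate routes, one resting on a false implication, rather than giving a proof.
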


\begin{proposition}\label{cor_diamvolURT}
When the number of splits goes to infinity, it holds that, almost surely, there exists $N_0\geq 1$ such that for all $N\geq N_0$,
$$ \sqrt d e^{-N(1+\log(2))/d-4\sqrt{3N\log(N)/d}} \leq \diam(\mathcal{V}(x)) \leq \sqrt d e^{-N(1-\log(2))/d+2\sqrt{N\log(N)/d}}$$
and
$$ e^{-N(1+\log(2)) - 2\sqrt{N\log(N)}} \leq \lambda(\mathcal{V}(x)) \leq e^{-N (1 - \log(2)) + 2\sqrt{N\log(N)}}.$$
As a consequence, if we denote the normalized diameter $\diam^{\#}(\mathcal{V}(x)) := \diam(\mathcal{V}(x))/\sqrt{d}$, we obtain that, almost surely, for $N$ large enough, 
\begin{equation*}
  e^{-2N\log(2) - 2\sqrt{N\log(N)}(1+2\sqrt{3d})} \leq \dfrac{\diam^{\#}(\mathcal{V}(x))^d}{\lambda(\mathcal{V}(x))} \leq e^{2N\log(2) + 2\sqrt{N\log(N)}(1+\sqrt{d})}.
\end{equation*}
\end{proposition}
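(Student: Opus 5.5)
The plan is a Borel--Cantelli argument on top of the deviation inequalities of Propositions \ref{prop:diamURT2} and \ref{prop:diam_vol_URT2}, with the deviation parameters chosen as $N$-dependent quantities of order $\sqrt{\log N / N}$, so that each failure probability is at most of order $N^{-2}$ and hence summable in $N$.

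\textbf{Upper bound on the diameter.} Apply the first inequality of Proposition \ref{prop:diamURT2} with $\theta_N = 2\sqrt{\log(N)/(dN)}$. Then $N\theta_N = 2\sqrt{N\log(N)/d}$, and the probability of exceeding the bound is at most $d e^{-Nd\theta_N^2/2} = d N^{-2}$, which is summable.

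\textbf{Lower bound on the diameter.} Apply the second inequality with $\theta_N = 4\sqrt{3\log(N)/(dN)}$. This gives $N\theta_N = 4\sqrt{3N\log(N)/d}$ and failure probability $d e^{-Nd\theta_N^2/24} = d e^{-2\log N} = dN^{-2}$. The constraint $\theta_N < 6/d$ holds for all $N$ large enough, since $\theta_N \to 0$, which is all Borel--Cantelli requires.

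\textbf{Volume bounds.} Use Proposition \ref{prop:diam_vol_URT2} with $\alpha_N = 1 + \epsilon_N$ for the lower tail and $\alpha_N = 1 - \epsilon_N$ for the upper tail, where $\epsilon_N = 2\sqrt{\log(N)/N}$. The key computation is the expansion of $\log(\alpha e^{1-\alpha}) = \log(1+u) - u$ with $u = \pm\epsilon_N$:
\[
\log(1+u) - u \le -u^2/2 + u^3/3 .
\]
This is $-u^2/2$ up to a correction $O(|u|^3)$ (for $u<0$ the cubic term even helps). Hence
\[
(\alpha_N e^{1-\alpha_N})^N \le \exp\bigl(-N\epsilon_N^2/2 + O(N\epsilon_N^3)\bigr) = \exp\bigl(-2\log N + o(1)\bigr),
\]
which is of order $N^{-2}$ and summable, and $N\epsilon_N = 2\sqrt{N\log N}$. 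This step is the main technical point. If the constant is too tight for exact $N^{-2}$, I would instead take $\epsilon_N = c\sqrt{\log(N)/N}$ with $c$ slightly larger, or simply note that $N^{-2+o(1)}$ is still summable; the exponent $2\sqrt{N\log N}$ in the statement is matched because $N^{-2+o(1)}$ suffices.

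\textbf{Borel--Cantelli and the ratio bound.} Borel--Cantelli applied to the four summable failure sequences gives an almost sure $N_0$ beyond which all four bounds hold simultaneously. For the ratio, divide the $d$-th power of the normalized diameter bounds by the opposite volume bounds. For the upper bound,
\[
-N(1-\log 2) + 2\sqrt{dN\log N} + N(1+\log 2) + 2\sqrt{N\log N} = 2N\log 2 + 2\sqrt{N\log N}(1+\sqrt d).
\]
For the lower bound,
\[
-N(1+\log 2) - 4\sqrt{3dN\log N} + N(1-\log 2) - 2\sqrt{N\log N} = -2N\log 2 - 2\sqrt{N\log N}(1+2\sqrt{3d}),
\]
matching the statement.
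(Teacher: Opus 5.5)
Your proposal is correct and follows the paper's proof essentially step for step. It uses the same choices $\theta_N = 2\sqrt{\log(N)/(dN)}$, $\tilde\theta_N = 4\sqrt{3\log(N)/(dN)}$ and $\alpha_N = 1\pm 2\sqrt{\log(N)/N}$, the same Borel--Cantelli argument on $O(N^{-2})$ failure probabilities, and the same combination of diameter and volume bounds for the ratio. Your explicit inequality $\log(1+u)-u\le -u^2/2+u^3/3$, valid for all $u>-1$, is a slightly cleaner replacement for the paper's Taylor expansion with $O$-remainders. One detail to add: $1-2\sqrt{\log(N)/N}\in(0,1)$ only once $N\ge 9$, as the paper notes.
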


In the same spirit as for centered random trees, we obtain an upper bound on the pointwise error of the resulting regression estimator. 

\begin{theorem}\label{cor_URT}
   Let $n\geq 1$, $d\geq 1$, $x\in S_X$ and $C_d := (1+\log(2))d+2(1-\log(2))$. Assume that the integer $N$ is such that $N= d\log(n)/C_d$. Suppose that $\mathcal V (x)= \mathcal V(x, (D_i,S_i)_{i=1}^N)$ is obtained from a uniform random tree as described in Proposition \ref{prop:diamURT2}. Under \ref{cond:D}, \ref{cond:epsilon}, \ref{cond:reg4} and \ref{cond:density_XCART}, there exists $\bar{C}>0$, that only depends on the parameters of the problem but not on $n$, such that almost surely, there exists an integer $n_0$ such that for all $ n \geq n_0$,
\[
 | \hat g_{\mathcal V}(x) - g(x)| \leq
 \bar{C} \, n^{-1/(\Theta d+2)} e^{2\sqrt{\log(n)\log\log (n)}},
\]
where $\Theta := (1+\log(2))/(1-\log(2))$.
\end{theorem}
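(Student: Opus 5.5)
The plan is to apply Corollary \ref{sousgauss} conditionally on the tree randomness $(D_i,S_i)_{i=1}^N$, which is independent of the sample. Proposition \ref{cor_diamvolURT} then supplies almost sure bounds on $\diam(\mathcal V(x))$ and $\lambda(\mathcal V(x))$ for $N$ large, and hence for $n$ large since $N=d\log(n)/C_d\to\infty$.

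\textbf{Step 1 (geometry).} Under \ref{cond:density_XCART}, $P^X(\mathcal V(x))\geq b\,\lambda(\mathcal V(x))$. By Proposition \ref{cor_diamvolURT}, almost surely for $n$ large,
$$\lambda(\mathcal V(x))\geq e^{-N(1+\log 2)-2\sqrt{N\log N}}, \qquad \diam(\mathcal V(x))\leq \sqrt d\, e^{-N(1-\log 2)/d+2\sqrt{N\log N/d}}.$$

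\textbf{Step 2 (balancing).} With $N=d\log n/C_d$ we get $N(1+\log 2)=\frac{(1+\log 2)d}{C_d}\log n$. The variance term is then of order
$$n^{-1/2}e^{N(1+\log 2)/2+\sqrt{N\log N}} = n^{-(1-\log 2)/C_d}\,e^{\sqrt{N\log N}},$$
since $\tfrac12-\tfrac{(1+\log 2)d}{2C_d}=\tfrac{1-\log 2}{C_d}$. The bias term is bounded by $L\sqrt d\, n^{-(1-\log 2)/C_d}e^{2\sqrt{N\log N/d}}$. Both exponents equal $(1-\log 2)/C_d=1/(\Theta d+2)$. Moreover $N\log N\leq \log n\log\log n$ for $n$ large, because $d/C_d<1$. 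Hence both corrections are at most $e^{2\sqrt{\log n\log\log n}}$.

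\textbf{Step 3 (probability: the main obstacle).} Corollary \ref{sousgauss} gives only probability $1-3\delta$ for a fixed $n$, whereas the theorem claims an almost sure statement for all $n\geq n_0$. I would take $\delta=\delta_n=n^{-2}$, which is summable. The condition $nP^X(\mathcal V(x))\geq 8\log(1/\delta_n)=16\log n$ holds for large $n$, because $nP^X(\mathcal V(x))\gtrsim n^{2(1-\log 2)/C_d}e^{-2\sqrt{N\log N}}$ grows polynomially.

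The key point is that the partition depends on $n$ through $N$, while the Corollary is applied conditionally on the tree. The failure probability is therefore at most $3\delta_n$ on the event where the geometric bounds hold, and this event is almost sure for $n$ large. Borel--Cantelli, applied to the events $\{\text{bound fails at } n\}\cap\{\text{geometry holds at } n\}$, yields the almost sure conclusion. This introduces an extra factor $\sqrt{\log n}$ in the variance term, which is absorbed into $e^{2\sqrt{\log n\log\log n}}$ up to adjusting constants; the slack comes from $\sqrt{N\log N}<2\sqrt{\log n\log\log n}$ with room to spare.

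A technical point remains: $N$ must be an integer. I would take the integer part, which changes the bounds only by constant factors absorbed into $\bar C$.
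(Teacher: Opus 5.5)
Your proposal is correct and follows essentially the same route as the paper. The ingredients match: Corollary~\ref{sousgauss} applied conditionally on the tree with $\delta_n=n^{-2}$, Borel--Cantelli in $n$, the almost sure bounds of Proposition~\ref{cor_diamvolURT} on $\lambda(\mathcal V(x))$ and $\diam(\mathcal V(x))$, and the same exponent bookkeeping, yielding $n^{-1/(\Theta d+2)}$ together with $N\log N\le \log n\log\log n$.

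Your intersection of the failure events with the tree-measurable geometric event is actually a cleaner justification of the Borel--Cantelli step. The paper asserts summability under the random condition $nP^X(\mathcal V(x))/\log n\ge 16$ without making this conditioning explicit.
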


Ignoring the sub-polynomial factor, the rate delivered by our bound is governed by $n^{-1/(\Theta d + 2)}$ where $\Theta \approx 5.5$. If we had $\Theta = 1$, we would essentially recover the standard optimal minimax rate $n^{-1/(d+2)}$ (again up to the sub-polynomial factor). This is what happens for centered trees, where $\bar S_i = 1/2$ is deterministic, so that $\lambda(\mathcal V(x))$ and $\mathrm{diam}(\mathcal V(x))^d$ decay at the same exponential rate, their ratio is nonetheless unbounded, as Proposition \ref{cor22} shows, which is what the sub-polynomial correction in Theorem \ref{cor_UcR} accounts for. For uniform trees, in contrast, the constants appearing in the ratio $\Theta = (1+\log(2))/(1-\log(2))$ stem directly from the lower and upper bounds of the cell sizes. Specifically, $1+\log(2)$ corresponds to the geometric decay rate of the smallest fraction $m_i = \min(S_i, 1-S_i)$, while $1-\log(2)$ corresponds to that of the largest fraction $M_i = \max(S_i, 1-S_i)$.

As we did previously for centered trees, we would like to study the shape regularity of the specific cell $\mathcal{V}(x)$ containing the target point $x \in S_X$. However, we face a major difficulty due to $x$: $\mathcal{V}(x)$ is a random cell whose construction path dynamically depends on the spatial position of $x$. Consequently, the same technique as before using the Paley-Zygmund inequality does not apply directly to $\mathcal{V}(x)$ because the successive relative splits are correlated. On the other hand, one can perfectly apply the Paley-Zygmund inequality to a fixed cell $V$ of the partition (i.e., a cell constructed by following a predetermined path, independently of any evaluation point $x$). Doing so reveals that uniform random splits naturally and frequently generate highly anisotropic (elongated) shapes. Consequently, uniform trees fundamentally fail to satisfy the shape regularity property, as formalized in the following proposition.

\begin{proposition}\label{uniform tree not regular}
Let $d \geq 2$. Uniform trees are not $\beta$-SR, i.e., for any $N \geq d$ and any fixed hyper-rectangle $V \in \mathcal V$ constructed by a predetermined sequence of $N$ splits independent of the evaluation point, we have, with probability at least $1/11$,
$$\dfrac{h_+(V )}{h_-(V )} \geq e^{\sqrt{N/d}}.$$
\end{proposition}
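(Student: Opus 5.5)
Fix a cell $V$ obtained by a predetermined left/right path of $N$ splits, so the relative reductions $\bar S_i$ are i.i.d.\ uniform on $(0,1)$, independent of the directions $D_i$. Write the side length in direction $k$ as
\[
\ell_k=\prod_{i:D_i=k}\bar S_i,\qquad \log \ell_k=-\sum_{i=1}^N \mathds 1_{\{D_i=k\}}E_i,
\]
where the $E_i=-\log\bar S_i$ are i.i.d.\ standard exponential. The plan is to compare only the two coordinates $k=1,2$:
\[
\frac{h_+(V)}{h_-(V)}\ \ge\ \frac{\ell_1}{\ell_2}\vee\frac{\ell_2}{\ell_1}=e^{|Z|},\qquad Z:=\sum_{i=1}^N \xi_i E_i,
\]
with $\xi_i=\mathds 1_{\{D_i=2\}}-\mathds 1_{\{D_i=1\}}$. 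It then suffices to show $\PP(Z^2\ge N/d)\ge 1/11$.

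This will come from the Paley--Zygmund inequality applied to the nonnegative variable $Z^2$. The $\xi_i$ are i.i.d.\ with $\PP(\xi_i=\pm1)=1/d$ and $\PP(\xi_i=0)=1-2/d$, and they are independent of the $E_i$. Hence the summands $\xi_iE_i$ are i.i.d.\ and centered, with
\[
\EE[\xi_i^2E_i^2]=2\cdot\tfrac{2}{d}=\tfrac4d,\qquad \EE[\xi_i^4E_i^4]=24\cdot\tfrac2d=\tfrac{48}{d}.
\]
Therefore $\EE Z^2=4N/d$. For the fourth moment of a sum of i.i.d.\ centered terms,
\[
\EE Z^4=N\cdot\tfrac{48}{d}+3N(N-1)\tfrac{16}{d^2}\ \le\ \tfrac{48N}{d}+\tfrac{48N^2}{d^2}.
\]
Choosing $\theta=1/4$, we have $\theta\,\EE Z^2=N/d$, and Paley--Zygmund gives
\[
\PP(Z^2\ge N/d)\ \ge\ \Bigl(\tfrac34\Bigr)^2\frac{(\EE Z^2)^2}{\EE Z^4}=\frac{9}{16}\cdot\frac{16N^2/d^2}{48N/d+48N^2/d^2}=\frac{3}{16}\cdot\frac{N}{d+N}.
\]

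The main obstacle is that this lower bound depends on $N$ and $d$ and needs checking against $1/11$. Under the hypothesis $N\ge d$ we have $N/(N+d)\ge 1/2$, so the bound is at least $3/32$. Since $3/32>1/11$ (as $33>32$), this is exactly the claimed constant, which suggests the authors used the same computation. On the event $\{Z^2\ge N/d\}$ we get $|Z|\ge\sqrt{N/d}$, hence $h_+(V)/h_-(V)\ge e^{\sqrt{N/d}}$.

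Two points need a careful check in writing this up. First, the fixed-path assumption is essential: it makes the $\bar S_i$ i.i.d.\ uniform and independent of the $D_i$, which is precisely why the statement is restricted to such $V$ rather than to $\mathcal V(x)$. Second, the fourth-moment expansion must be verified, in particular that the cross terms with an odd power of a centered summand vanish, which uses the independence of the $\xi_iE_i$ across $i$.
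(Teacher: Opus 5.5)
Your proposal is correct and follows the paper's proof essentially line for line. Both use the same reduction to $|Z_{1,2}|$ with $E_i=-\log S_i$ standard exponential, apply Paley--Zygmund to $Z^2$ with $\theta=1/4$, and compute the same moments $\EE Z^2=4N/d$ and $\EE Z^4=48N(d+N-1)/d^2$, giving $3/32\ge 1/11$ for $N\ge d$. The only cosmetic difference is that you bound $N(N-1)\le N^2$ where the paper uses $3(d+N-1)\le 6N$, and this yields the same constant.
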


As for Proposition \ref{centered tree not regular} above, the precise value $1/11$ does not play any crucial role here. The important fact is that Proposition \ref{uniform tree not regular} shows that shape-regularity is violated on an event having positive probability (independent of $n$).

\subsection{Mondrian trees}\label{s43}

 A Mondrian process \textit{MP} is a process that generates infinite tree partitions of $S_X = [0,1]^d $ (\cite{mondrianroy2008}). These partitions are built by iteratively splitting the different cells at random times, where both the timing and the position of the splits are determined randomly. Additionally, the probability that a cell is split depends on the length of its sides, and the probability of splitting a particular side is proportional to the length of that side. Once a side is selected, the exact position of the split is chosen uniformly along that side.
We can then define the pruned Mondrian process \textit{MP}($\Lambda$). This version introduces a pruning mechanism that removes splits occurring after a specific time $\Lambda > 0$, which is referred to as the lifetime.

Mondrian trees are studied in detail in the paper \cite{minimaxmondrian}. In particular, it is possible to give a simple description of the distribution of the cell $\mathcal V(x)$ containing $x$ and generated by a process \textit{MP}($\Lambda $). Such a property helps to demonstrate the following result, that Mondrian trees are \(\beta\)-SR in probability.

\begin{proposition}\label{mondrian}
 For any $x\in [0,1] ^d$, let  $\mathcal V (x)$ be the hyper-rectangle containing $x$ obtained from a \textit{MP}($\Lambda $) tree. For $\delta \leq \min(1 - e^{-\Lambda d}, \; 1 - (1 - e^{-1})^d)$, we have, with probability at least \(1 - 2\delta\),
\[ \frac{h_+(\mathcal V (x))}{h_-(\mathcal V (x))} \leq \frac{5d \log(\delta/d)}{{\log(1-\delta)}}. \]
\end{proposition}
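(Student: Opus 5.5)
We use the explicit description of the Mondrian cell containing $x$ from \cite{minimaxmondrian} (their Proposition 1): for $\textit{MP}(\Lambda)$ on $[0,1]^d$,
\[
\mathcal V(x)=\prod_{k=1}^d \big[(x_k-L_k^-)\vee 0,\ (x_k+L_k^+)\wedge 1\big],
\]
where the $2d$ variables $L_k^\pm$ are i.i.d. exponential with parameter $\Lambda$. Writing $\ell_k$ for the $k$-th side length, we get $\ell_k=\min(x_k,L_k^-)+\min(1-x_k,L_k^+)$. The ratio $h_+/h_-=\max_k\ell_k/\min_k\ell_k$ then reduces to a statement about $2d$ independent exponentials truncated by the boundary. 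No dynamic tracking of splits is needed.

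\textbf{Step 1: upper bound on $h_+$.} Trivially $\ell_k\le 1$. Also $\ell_k\le L_k^-+L_k^+$, which is Gamma$(2,\Lambda)$. We will use whichever bound is smaller. A union bound over the $2d$ exponentials gives, with probability at least $1-\delta$,
\[
\max_{k,\pm} L_k^\pm\le \frac{\log(2d/\delta)}{\Lambda},
\]
hence $h_+\le \min\big(1,\,2\log(2d/\delta)/\Lambda\big)$, up to constants. Tuning the split of the failure budget to get exactly $\log(\delta/d)$ is bookkeeping.

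\textbf{Step 2: lower bound on $h_-$.} Each $\ell_k$ contains at least one one-sided piece $\min(x_k,L_k^-)$ or $\min(1-x_k,L_k^+)$. The boundary distance on the chosen side is at least $1/2$, since $\max(x_k,1-x_k)\ge 1/2$. So $\ell_k\ge \min(1/2,L)$ with $L\sim\mathrm{Exp}(\Lambda)$, and we need a lower tail uniform over $d$ coordinates. Independence across $k$ gives
\[
\PP\Big(\min_k L_k\ge t\Big)=e^{-d\Lambda t}.
\]
Setting this equal to $1-\delta$ yields $t=-\log(1-\delta)/(d\Lambda)$. With probability at least $1-\delta$, this gives
\[
h_-\ge \min\Big(\tfrac12,\ \tfrac{-\log(1-\delta)}{d\Lambda}\Big).
\]
The hypothesis $\delta\le 1-e^{-\Lambda d}$ says exactly that $-\log(1-\delta)\le d\Lambda$, so $t\le 1$. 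This is where that condition enters; it also lets us absorb the $\min$ with $1/2$ at the cost of a factor $2$.

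\textbf{Step 3: combine and handle regimes.} On the intersection of the two events, which has probability at least $1-2\delta$, we divide the bounds. When $\Lambda$ is large, we use $h_+\lesssim\log(d/\delta)/\Lambda$; the $\Lambda$'s cancel, giving
\[
\frac{h_+}{h_-}\lesssim \frac{d\log(d/\delta)}{-\log(1-\delta)} = \frac{d\log(\delta/d)}{\log(1-\delta)},
\]
where the last equality holds because both logarithms are negative. When $\Lambda$ is small (roughly $\Lambda\lesssim\log(d/\delta)$), we instead use $h_+\le 1$. Then
\[
\frac{h_+}{h_-}\le \max\Big(2,\ \frac{d\Lambda}{-\log(1-\delta)}\Big),
\]
and we bound $\Lambda$ by $\log(d/\delta)$ in this regime. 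The second hypothesis, $\delta\le 1-(1-e^{-1})^d$, should guarantee $\log(d/\delta)\ge 1$ and that the constant $2$ is dominated, e.g. $-\log(1-\delta)\le d$. Collecting constants should give the factor $5$.

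The main obstacle is Step 3's constant management and regime splitting: making one expression valid uniformly in $\Lambda$, and checking where each of the two conditions on $\delta$ is used. The probabilistic content is just exponential tails and a union bound.
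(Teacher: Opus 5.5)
Your proposal is correct, but it uses different tail estimates from the paper, and this changes how the argument closes.

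For $h_+$, the paper dominates each side by the untruncated $\xi_j=L_j+R_j\sim\Gamma(2,\Lambda)$ and applies a sub-gamma deviation inequality. It uses $\delta\le 1-(1-e^{-1})^d$ exactly to make $t=-\log(1-(1-\delta)^{1/d})\ge 1$, so that $2+2\sqrt t+t\le 5t\le 5\log(d/\delta)$. Your union bound over the $2d$ exponentials is more elementary and even gives the smaller bound $2\log(2d/\delta)/\Lambda$.

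For $h_-$, the paper splits $u\le 1$ as $a=\min(u,x_j)$ on the left and $u-a\le 1-x_j$ on the right. This gives $\PP(h_j\ge u)\ge e^{-\Lambda u}$, hence $h_-\ge t:=-\log(1-\delta)/(d\Lambda)$ with no truncation loss. The $\Lambda$'s then cancel in one line, with no case analysis. Your one-sided ``farther boundary'' bound only yields $h_-\ge\min(1/2,t)$, and this is what forces your regime split. The split is genuinely needed: absorbing via $\min(1/2,t)\ge t/2$ gives $4d\log(2d/\delta)/(-\log(1-\delta))$, which exceeds the target when $d/\delta<16$ (e.g.\ $d=2$, $\delta=0.2$).

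Your threshold $\Lambda=\log(d/\delta)$ does close with constant $5$, and your guesses about the role of the second hypothesis are right. It gives $\log(d/\delta)\ge 1$ and $-\log(1-\delta)\le d\log(e/(e-1))<d/2$.
\begin{itemize}
\item For $\Lambda>\log(d/\delta)$, these give $t<1/2$, so $h_-\ge t$. The ratio is then at most $2d\log(2d/\delta)/(-\log(1-\delta))\le 5d\log(d/\delta)/(-\log(1-\delta))$.
\item For $\Lambda\le\log(d/\delta)$, the bound $h_+\le 1$ gives a ratio of at most $\max\{2,\,d\Lambda/(-\log(1-\delta))\}$, and both entries are below the target.
\end{itemize}

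In short, you trade a simpler upper tail for a weaker lower tail and a case split. The paper's sharper lower tail makes the final combination immediate.
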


The latter inequality implies that, for any small $\delta>0$, there is a constant $K_\delta >0 $ such that  the event $h_+(\mathcal V (x)) \leq K_\delta \, h_-(\mathcal V (x))$ occurs with probability at least $1-\delta$. In other words,  ${h_+(\mathcal V (x))} /{h_-(\mathcal V (x))}~ $ is a tight sequence. As a consequence, Mondrian regression trees attain, with high probability, the minimax rate for the pointwise error in expectation.

\begin{theorem}\label{thm:mondrian}
Let  $n\geq 1$, $d\geq 1$, $x\in S_X$, $\Lambda  \asymp n^{1/(d+2)}$, $\delta\in (0,1/5)$ and define $c_{\delta,d} = \log(1/\delta) (d/ \log(1/(1-\delta)) )^d$. Let  $\mathcal V (x)$ be the hyper-rectangle  containing $x$ obtained from a \textit{MP}($\Lambda $) tree.
Under \ref{cond:D}, \ref{cond:epsilon}, \ref{cond:reg4} and \ref{cond:density_XCART}, if $n^{2/(d+2)} b \geq 8 c_{\delta,d} $, then it holds, with probability at least $1-5\delta$, 
\[
 | \hat g_{\mathcal V}(x) - g(x)| \lesssim
 C \, n^{-1/(d+2)}\; ,
\]
where $C = \sqrt{ {4\sigma^2  c_{\delta,d}} / {b }} +  5 \sqrt{d} \, L(\mathcal{V}(x))\log(d/\delta)$. 
\end{theorem}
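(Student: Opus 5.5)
We combine Corollary \ref{sousgauss} with an explicit description of the Mondrian cell. The key structural fact, from \cite{minimaxmondrian}, is that the cell $\mathcal V(x)$ of an \textit{MP}($\Lambda$) tree is a product $\prod_{k=1}^d [x_k - L_k^-, x_k + L_k^+]\cap[0,1]$. Here $L_k^\pm = \min(E_k^\pm, \text{distance to the boundary})$, and the $E_k^\pm$ are i.i.d.\ exponential with rate $\Lambda$.

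Since the Mondrian randomness is independent of the sample, we may condition on $\mathcal V(x)$ and apply Corollary \ref{sousgauss} to the fixed partition. This gives, with conditional probability at least $1-3\delta$, provided $nP^X(\mathcal V(x))\geq 8\log(1/\delta)$,
\[
| \hat g_{\mathcal V}(x) - g(x)| \leq \sqrt{\tfrac{4\sigma^2\log(1/\delta)}{nP^X(\mathcal V(x))}} + L(\mathcal V(x))\diam(\mathcal V(x)).
\]
It therefore remains to produce an event of probability at least $1-2\delta$ on which two geometric bounds hold: an upper bound on the diameter of order $\log(d/\delta)/\Lambda$, and a lower bound on the volume of order $\Lambda^{-d}/(\text{const})$.

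\textbf{Diameter.} Each side length $h_k$ is at most $E_k^-+E_k^+$. A union bound over the $2d$ exponential variables with threshold $t=\log(2d/\delta)/\Lambda$ gives $h_k\leq 2t$ for all $k$ with probability at least $1-\delta$. On this event $\diam\leq 2\sqrt d\,t$. Matching the stated constant $5\sqrt d\log(d/\delta)$ should only require tuning the split of $\delta$ and using $\Lambda\asymp n^{1/(d+2)}$. Alternatively, one can read off $h_+$ via Proposition \ref{mondrian}.

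\textbf{Volume.} This is the step I expect to be the main obstacle. We need $h_k \geq \log(1/(1-\delta))/(d\Lambda)$ for every $k$, simultaneously with probability at least $1-\delta$. Each one-sided length exceeds $s$ with probability at least $e^{-\Lambda s}$, as long as $s$ is below the boundary distance, which holds for $\Lambda$ large. Choosing $s=\log(1/(1-\delta))/(d\Lambda)$ gives $\mathbb P(E_k^+\geq s)=(1-\delta)^{1/d}$, so by independence the product over $k$ has probability $1-\delta$. Hence $\lambda(\mathcal V(x))\geq \bigl(\log(1/(1-\delta))/(d\Lambda)\bigr)^d$, and \ref{cond:density_XCART} gives
\[
nP^X(\mathcal V(x))\geq nb\,\Lambda^{-d}\bigl(\log(1/(1-\delta))/d\bigr)^d \asymp b\, n^{2/(d+2)}\log(1/\delta)/c_{\delta,d}.
\]
The hypothesis $n^{2/(d+2)}b\geq 8c_{\delta,d}$ is exactly what makes this at least $8\log(1/\delta)$, and the variance term becomes $\sqrt{4\sigma^2c_{\delta,d}/b}\;n^{-1/(d+2)}$. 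The delicate point is handling the truncation at the boundary of $[0,1]^d$ when $x$ is near it, where only one side may extend. One option is to use the fact that at least one direction per coordinate has room at least $1/2$. Another is to accept a constant loss absorbed into $\lesssim$.

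\textbf{Conclusion.} A final union bound over the three events gives probability at least $1-3\delta-\delta-\delta=1-5\delta$. Plugging in $\Lambda\asymp n^{1/(d+2)}$ yields the bias term $5\sqrt d\,L(\mathcal V(x))\log(d/\delta)\,n^{-1/(d+2)}$, which completes the bound.
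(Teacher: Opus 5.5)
Your proposal is correct and follows essentially the paper's route: you condition on the Mondrian cell to invoke Corollary~\ref{sousgauss}, lower-bound $h_-(\mathcal V(x))$ by $\log(1/(1-\delta))/(\Lambda d)$ with probability $1-\delta$ via $\mathbb P(h_k\geq s)\geq e^{-\Lambda s}$ and independence across coordinates, upper-bound $h_+(\mathcal V(x))$ with probability $1-\delta$, and count the failure probability as $\delta+3\delta+\delta$. The differences are minor: the paper handles the boundary by splitting $s$ across both sides ($a=\min(s,x_k)$, $b=s-a$), which gives the same bound as your ``side with room at least $1/2$'' option (commit to that one; it is valid once $s\leq 1/2$), and it bounds $h_+$ through a sub-gamma tail for $\Gamma(2,\Lambda)$, whereas your union bound over the $2d$ exponentials needs no tuning, since $2\log(2d/\delta)\leq 5\log(d/\delta)$ holds automatically when $d/\delta>5$.
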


While the convergence rate above matches the minimax rate for pointwise error in expectation, it holds with a probability that scales poorly, far from exponential decay. For instance, this rate cannot be extended to an almost sure convergence guarantee. Interestingly, an almost identical result, minimax rates under poor probability scaling, has been obtained for Proto-NN in Corollary \ref{cor:main_th_proto}. This similarity likely stems from the use of an additional source of randomness in the partition construction of both Proto-NN and Mondrian tree. We believe that in both cases, the (random) construction process may lack sufficient stability, with bad events occurring with too large probability, such as the formation of excessively small cells.

The analysis of Section D in \cite{BPS} demonstrates that structural sub-optimality is inevitable when splitting directions are chosen independently of the cell's geometry. As highlighted in \cite{BPS}, trees lacking such internal dependencies, where the choice of the dimension to split does not account for the current side lengths, suffer from a shape regularity ratio that grows at an exponential rate with a probability bounded away from zero. In contrast, Mondrian trees manage to escape this fate. Although they are constructed independently of the labels $Y$, they are not ``blind'' to the shape of the cell. In a Mondrian process, the direction $j$ is selected with a probability proportional to the current side length $h_j$. This internal dependence acts as a regulatory mechanism: the longer a side is, the more likely it is to be split, which mechanically forces the cell to remain nearly isotropic. This self-correcting feature ensures that the shape regularity ratio remains controlled, avoiding the exponential explosion described in Proposition 22 in \cite{BPS}. Ultimately, this underscores a major conclusion: to achieve optimal convergence rates, an algorithm must break the independence between splitting direction and cell dimensions, either through an appropriate stochastic mechanism like Mondrian trees or via a data-driven adaptive procedure such as an amended version of CART (See section 5.3 in \cite{BPS}).

\section{Prototype nearest neighbors and OptiNet}\label{s51}

Building upon the study of purely random trees, we now introduce Prototype Nearest Neighbors (Proto-NN) and OptiNet. These methods share a fundamental similarity with purely random trees: they all belong to the class of local averaging estimators where the prediction at a given point is determined by $\mathcal{V}$. In their most basic form, both purely random trees and Proto-NN models can be constructed through a ``blind'' stochastic process, the former by choosing split positions independently of the labels, and the latter by randomly sampling prototype locations in the feature space. However, these models differ significantly in their geometric flexibility and their path toward adaptivity. While purely random trees are constrained by a rigid, axis-aligned recursive structure that often leads to a collapse in shape regularity, Proto-NN models employ Voronoi partitions, which naturally promote more isotropic cells. OptiNet refines this approach by introducing a rigorous selection of prototypes through the construction of an $\eta$-net. Instead of allowing randomness to dictate the proximity of the centers, OptiNet imposes a minimum distance $\eta$ between each prototype, thereby ensuring that the Voronoi cells do not clump together and keep a $d$-th power of the diameter proportionate to their volume.

Recall that the observed sample $ \{ (X,Y) , (X_1,Y_1),\ldots, (X_n, Y_n) \}  $, as introduced in Section \ref{s2}, is independent and identically distributed with covariates $X_i\in \mathbb R^d$ and response variable $Y_i\in \mathbb R$. The goal is to build a regression map to estimate $\mathbb E [Y|X=x]$, for a given $x\in \mathbb R^d$. A prototype learning algorithm relies on two steps: construct the prototype sample $(Z_j,\overline {Y}_j)_{j=1,\ldots, m}$ and train a learning rule based on the prototypes. Arguably the simplest approach among the $1$-nearest neighbor prototype learning is the one studied in \cite{gyorfi2021universal}, called Proto-NN, where the prototype covariates collection $(Z_j)_{j=1,\ldots, m}$ forms an independent and identically distributed collection of random variables with the same distribution as $X$. The labels $(\overline{Y}_j)_{j=1,\ldots, m}$ are created using the initial sample $(X_i, Y_i) _{i=1,\ldots, n}$ as follows: for each $j=1,\ldots, m$,
$$\overline{Y}_j = \frac{\sum_{i=1} ^n  Y_i \ind_{ V_j } (X_i) }{\sum_{i=1} ^n  \ind_{ V_j } (X_i) } $$ where $(V_j)_{j=1,\ldots, m}$ denotes the Voronoi cells of $(Z_j)_{j=1,\ldots, m}$. The resulting algorithm is the $1$-NN rule applied to the prototype sample $(Z_j,\overline{Y}_j)_{j=1,\ldots, m}$, as formally introduced below.

For $x\in \mathbb R^d$, let $\hat X_1(x)$ be the nearest neighbor to $x$ among the $ Z_1,\ldots, Z_m$, where tie breaking is done, for instance, by favoring larger indexes so that a unique $\hat X_1(x)$ is identified for each $x$. Let $V_k$ denote the Voronoi cell of $Z_k$ defined as $ \{x\in S_X \, : \, \hat X_1(x) = Z_k \}$. The collection $V_1,\ldots, V_m$ forms a partition of the domain $S_X$. Therefore, each $x$ can be given a unique element $ \mathcal V(x) := V_j$ whenever $x\in V_j $. 

The Proto-NN prediction rule then writes
$$\forall x \in S_X, \quad \hat g _{\textrm{proto}}(x) = \frac{\sum_{i=1} ^n Y_i  \ind_{ \mathcal  V (x) } (X_i )  }{\sum_{i=1} ^n    \ind_{\mathcal  V(x) } (X_i)  }\; ,$$
with, as usual, the convention that $0/0 = 0$. Note from its definition that Proto-NN is an estimator that results from a random partition.

Let us now introduce the assumptions required to establish our concentration bound for Proto-NN. First, we require the prototype variables $Z_i$ to satisfy the following condition.

\begin{enumerate}[label=(DZ), wide=0.5em,  leftmargin=*]
\item \label{cond:D1} 
The random variables $\{Z, (Z_i)_ {i=1,\ldots,m} \}$ are independent and identically distributed on $\mathbb R^d$ with common distribution $  P^Z  = P^X$. 
\end{enumerate}

Next, we describe the assumptions on the distribution of the covariates $X$ which also apply to the prototypes $Z$ because $P^X = P^Z$.

\begin{enumerate}[label=(XZ), wide=0.5em,  leftmargin=*]
\item \label{cond:reg2} There is $\rho_0 > 0$ such that $S_X\subset B (0, \rho_0)$. Moreover, there is $c_d>0$ and $T_0>0$ such that
\begin{align*}
&\lambda (S_X \cap  B(x, \tau ) ) \geq c_d \lambda   ( B(x, \tau )) , \qquad \forall \tau \in (0,T_0] , \, \forall x\in S_X.
\end{align*}
 By changing \( c_d \), we can assume that \( T_0 = \rho_0\). Additionally, $X$ has a density $f_X$ on $S_X$  and there exist constants $0 < b\leq M <+\infty$ such that $ b \leq f_X (x) \leq M$. 
\end{enumerate}

\begin{enumerate}[label=(XNN), wide=0.5em,  leftmargin=*]
\item  \label{cond:density_XNN} There is a positive function $\ell $ defined on $S_X$ and $T_0>0$ such that, for all $x\in S_X$ and $\tau \in (0, T_0)$,
$$ P^X( B(x, \tau) )  \geq   \ell(x)\tau ^d.$$
\end{enumerate}

In Assumption \ref{cond:D1}, we require that the $X_i$'s and $Z_i$'s follow the same distribution. This is indeed a classical framework for prototype algorithms. Interestingly, we note that, in fact, the distributions $P^Z$ and $ P^X$ need not be identical to preserve the rates exhibited in Theorem \ref{th:main_th_proto} and Corollary \ref{cor:main_th_proto}. More precisely, if the distributions $P^Z$ and $P^X$ are different, if $P^Z$ satisfies Assumption \ref{cond:reg2} and if $P^X$ follows Assumption \ref{cond:density_XNN} as for the classical $k$-NN estimator, then the results of Theorem \ref{th:main_th_proto} and Corollary \ref{cor:main_th_proto} would remain unchanged, up to constants. In other words, the rates are preserved under a distribution shift for $P^Z$ and $P^X$, if they respectively satisfy Assumptions \ref{cond:reg2} and \ref{cond:density_XNN}. 

Note also that, compared to Assumption \ref{cond:density_XNN} used in the previous analysis of $k$-NN regression in \cite{BPS}, Assumption \ref{cond:reg2} is slightly stronger. The latter assumption is needed in our proofs to ensure that the (Lebesgue) volume of the Voronoi cell $\mathcal V(x)$ is large enough. 

We also adapt the sub-Gaussian noise assumption to account for the presence of the prototype sample.

\begin{enumerate}[label=(EZ), wide=0.5em,  leftmargin=*]
  \item  \label{cond:reg3} The random variable $\varepsilon$ is sub-Gaussian conditionally on $X$ and $Z$ with parameter $\sigma^2$.
\end{enumerate}

We are now ready to state our non-asymptotic error bound for Proto-NN.

\begin{theorem}\label{th:main_th_proto}
     Assume that \ref{cond:D}, \ref{cond:D1}, \ref{cond:reg4}, \ref{cond:reg2} and \ref{cond:reg3} are fulfilled. Let $\delta \in (0,1/5)$ and $x \in S_X$. If $n \geq 1,m \geq 3$ are such that $$\dfrac{n}{m} \geq \dfrac{8\psi_d\log(1/\delta)}{ \delta^d}, \quad \text{ and } \quad 32 d \log(12m / \delta )  \leq T_0^d  m b c_d V_d, $$
where $\psi_d = (2^d18Md)^d (2c_d b)^{-d}$, then with probability at least $1- 5\delta$,
    $$|\hat g_{\textrm{proto}} (x) - g(x) | \leq\sqrt{\dfrac{4\sigma^2 \psi_d\log(1/\delta)}{\delta^d }} \sqrt{\dfrac{m}{n}} + 2 L(\mathcal{V}(x))  \left(\frac{ 32d \log(12m/\delta) }{ mb c_d V_d }  \right)^{1/d}.$$
\end{theorem}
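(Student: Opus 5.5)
The plan is to condition on the prototype sample $(Z_j)_{j=1}^m$, which is independent of the data, and apply Corollary \ref{sousgauss} to the fixed Voronoi partition it induces. The failure probability $3\delta$ from the corollary then adds to $2\delta$ coming from two events on the prototypes. The first event controls the diameter of $\mathcal V(x)$. The second guarantees that $P^X(\mathcal V(x)) \ge \delta^d/(\psi_d m)$, which gives the variance term. Indeed, under this lower bound the hypothesis $n P^X(\mathcal V(x)) \ge 8\log(1/\delta)$ of Corollary \ref{sousgauss} is exactly the condition $n/m \ge 8\psi_d \log(1/\delta)/\delta^d$. Plugging the bound into the corollary produces the first term $\sqrt{4\sigma^2\psi_d\log(1/\delta)/\delta^d}\,\sqrt{m/n}$.

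\textbf{Diameter.} Set $r = (32 d\log(12m/\delta)/(m b c_d V_d))^{1/d}$; the assumption gives $r \le T_0$. It suffices that every $y \in \mathcal V(x)$ lies within distance $r$ of some prototype. For $y\in \mathcal V(x)$ we have $\|y - Z_{\hat j}\| \le \|y - Z_k\|$ for all $k$, so $\|y-x\| \le 2\max(\|x-Z_{\hat j}\|,\|y - Z_{\hat j}\|)$, and hence $\diam(\mathcal V(x)) \le 2\sup_{y\in S_X} \min_j \|y-Z_j\|$. To make this covering radius at most $r$ uniformly in $y$, I would use a finite cover. Cover $S_X\subset B(0,\rho_0)$ by balls of radius $r/2$ centred in $S_X$; each such ball $B(y,r/2)$ has $P^X$-mass at least $b c_d V_d (r/2)^d$ by \ref{cond:reg2}. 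A union bound over the cover, whose cardinality is polynomial in $m$, combined with $(1-p)^m\le e^{-mp}$, gives failure probability at most $\delta$. The constant $32d$ and the $12m$ inside the logarithm should absorb the cover size, and the constant $2$ in front of $L(\mathcal V(x))$ in the statement matches $\diam \le 2r$.

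\textbf{Volume (main obstacle).} We need $\lambda(\mathcal V(x))$, hence $P^X(\mathcal V(x)) \ge b\lambda(\mathcal V(x))$, to be at least of order $\delta^d/m$ with probability $1-\delta$. The factor $\delta^d$ suggests the following argument. Let $\rho = \|x - Z_{\hat j}\|$ and let $s$ be the distance from $Z_{\hat j}$ to the second-nearest prototype. The cell of $Z_{\hat j}$ contains $B(Z_{\hat j}, s/2)\cap S_X$, whose volume is at least $c_d V_d (s/2)^d$. The probability that some other prototype falls within distance $t$ of $Z_{\hat j}$ is at most $m\cdot M V_d (t+\rho)^d$-type quantities. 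I would choose $t \asymp \delta\,(bc_d/(Md))\,m^{-1/d}$ up to the $2^d$, $18$ constants, which make $\psi_d$ appear. This gives probability at most $\delta$ that either the nearest prototype is too far in the wrong way, or a second prototype crowds it. Conditioning on $\hat j$ is delicate because the other $m-1$ prototypes are not independent of that choice. I would instead bound directly $\PP(\exists\, j\neq k:\ \|Z_j - Z_k\|\le t,\ Z_k \text{ nearest to } x)$ by symmetry. This equals $m\,\PP(Z_1 \text{ nearest},\ \exists j\ne1 \text{ close to } Z_1)$, which in turn is bounded by $m\cdot (m-1)\cdot \PP(\|Z_1-Z_2\|\le t,\ \text{no } Z_j\text{ in } B(x,\|x-Z_1\|))$. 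Integrating out the last condition yields a factor $1/m$, leaving $O(m\,M V_d t^d) = O(\delta)$ for $t^d \asymp \delta/m$. Getting precisely $\delta^d$, rather than $\delta$, in the statement suggests the authors used a cruder bound, with $t\propto \delta m^{-1/d}$. I would follow whichever route closes, since $\delta^d\le\delta$ makes the stated bound weaker and thus fine. Combining the three events gives total probability $1-5\delta$.
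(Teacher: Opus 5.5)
Your overall architecture matches the paper's proof. You condition on the prototypes, apply Corollary \ref{sousgauss} on the $Z$-measurable event $\{nP^X(\mathcal V(x))\ge 8\log(1/\delta)\}$, bound the bias through $\diam(\mathcal V(x))\le 2\sup_{y\in S_X}\min_j\|y-Z_j\|$, and add the failure probabilities $3\delta+\delta+\delta$.

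\textbf{Volume step: a different and sharper route.} Let $Z_K$ be the prototype nearest to $x$ and $\Delta_K=\min_{j\neq K}\|Z_j-Z_K\|$. The paper lower-bounds $\Delta_K$ by the spacing $W_{(2)}-W_{(1)}$ of the distances $\|Z_i-x\|$ (Lemma \ref{PV}). It then controls that spacing with Lemma \ref{kappa}, which requires $f_W\le c_0F_W^{1-1/d}$ (hence the density bounds on $\|Z-x\|$) and gives $\PP(W_{(2)}-W_{(1)}\le t)\le C t m^{1/d}$. This forces $t\propto \delta m^{-1/d}$ and produces the factor $\delta^d$.

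Your final symmetry argument works on $\Delta_K$ directly; discard the first sketch with $(t+\rho)^d$. Drop the constraint on $Z_2$, condition on $Z_1$, and use that $F_W(\|Z_1-x\|)$ is uniform:
\[
\PP(\Delta_K\le t)\le m(m-1)\,MV_d t^d\,\EE\bigl[(1-F_W(\|Z_1-x\|))^{m-2}\bigr]=mMV_dt^d .
\]
Take $t^d=\delta/(mMV_d)$; the requirement $t/2\le T_0$ follows from the second hypothesis together with $bc_d\le M$. Then, with probability at least $1-\delta$,
\[
P^X(\mathcal V(x))\ge bc_dV_d(t/2)^d=\frac{bc_d}{2^dM}\,\frac{\delta}{m}.
\]
Since $\psi_d^{-1}\delta^d\le \frac{bc_d}{2^dM}\delta$, this implies the paper's Step 1, so the theorem follows. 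Your route uses only $f_X\le M$ and needs neither Lemma \ref{kappa} nor the density bounds on $W$. It also shows that $\delta^d$ can be replaced by $\delta$, up to constants, in both the sample-size condition and the variance term. The paper's shape-regularity constant $\log(m/\delta)/\delta^d$ would likewise improve to $\log(m/\delta)/\delta$. The spacing bound is lossy because $W_{(2)}-W_{(1)}$ can be tiny while the other prototypes are far from $Z_K$.

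\textbf{Diameter step: the covering does not close as written.} With balls of radius $r/2$, each ball is missed with probability at most $\exp(-mbc_dV_dr^d/2^d)=(\delta/(12m))^{32d/2^d}$. Any such cover has at least $\lambda(S_X)/(V_d(r/2)^d)\ge 2^d m b c_d/(32dM\log(12m/\delta))$ elements. For $d\ge 9$ the exponent $32d/2^d$ is below $1$, so the union bound grows with $m$ and cannot be made $\le\delta$. The factor $2^{-d}$ lost by halving the radius is not absorbed by $32d$.

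The repair is easy. Take a maximal $(r/d)$-separated subset $\{y_\ell\}$ of $S_X$; it has at most $(3d\rho_0/r)^d\le (3d)^d m$ points, using $r\le T_0=\rho_0$ and $bc_dV_dT_0^d\le 1$. Require a prototype in each $B(y_\ell,(1-1/d)r)$. The mass then loses only a factor $(1-1/d)^d\ge 1/4$, each miss has probability at most $(\delta/(12m))^{8d}$, and the union bound is far below $\delta$. In low dimension your radius $r/2$ is already fine.

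Alternatively, do as the paper does: use $\hat\tau_1\le\hat\tau_k$ with $k=16d\log(12m/\delta)$ and invoke the VC-type uniform bound on $k$-NN radii (Lemma 3 of \cite{portier2021nearest}). That lemma is where the constants $32d$ and $12m$ come from.
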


By choosing $m$ appropriately as a function of $n$, the following minimax convergence rate is established.

\begin{corollary}\label{cor:main_th_proto}
Let $\delta \in (0,1/5)$ and $x \in S_X$. In Theorem \ref{th:main_th_proto}, if $n$ is sufficiently large, then choosing the integer $m$ such that $m \asymp (n/c_\delta) ^{d/(d+2)} \log(12n/\delta)^{2/(d+2)}$, with $c_\delta =  \log(1/\delta) / \delta^d $, yields the following inequality with probability at least $1-5\delta$,
    $$|\hat g_{\textrm{proto}} (x) - g(x) | \lesssim  C \left( \frac{c_\delta \log(12n/\delta)}{n }\right)^{1/(d+2)}$$
    where $$C = \sqrt{4\sigma^2 \psi_d} +  2 L(\mathcal{V}(x)) \left(\dfrac{ 32d }{ b c_d V_d }\right)^{1/d}  \ \ \text{   and   } \ \ \psi_d =  \left(\dfrac{2^{d-1}18Md}{c_d b}\right)^d.$$
\end{corollary}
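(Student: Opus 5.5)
The corollary follows by plugging a specific choice of $m$ into Theorem~\ref{th:main_th_proto}. Two things need to be done: check that the two side conditions of the theorem hold for $n$ large, and check that both terms of the bound are of order $(c_\delta\log(12n/\delta)/n)^{1/(d+2)}$.

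\textbf{Setup.} Write $m = \kappa\,(n/c_\delta)^{d/(d+2)}\log(12n/\delta)^{2/(d+2)}$ with a constant $\kappa$ fixed, for instance $\kappa=1$ up to integer rounding. The rounding only affects constants, which is consistent with the symbols $\asymp$ and $\lesssim$.

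\textbf{Side conditions.} The first condition is $n/m\ge 8\psi_d\log(1/\delta)/\delta^d = 8\psi_d c_\delta$. Here
\[
n/m \asymp n^{2/(d+2)}\, c_\delta^{d/(d+2)}\log(12n/\delta)^{-2/(d+2)}.
\]
This tends to infinity with $n$, since the polynomial factor beats the logarithm, so it exceeds $8\psi_d c_\delta$ for $n$ large. The second condition is $32d\log(12m/\delta)\le T_0^d m b c_d V_d$. The right side grows polynomially in $n$ and the left only logarithmically, so it also holds for large $n$. This is exactly the meaning of ``$n$ sufficiently large''. Since $m\le n$ for large $n$, we also have $\log(12m/\delta)\le\log(12n/\delta)$.

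\textbf{Variance term.} It equals $\sqrt{4\sigma^2\psi_d c_\delta}\,\sqrt{m/n}$. Substituting $m$ gives
\[
\sqrt{4\sigma^2\psi_d}\; c_\delta^{1/2}\, n^{-1/(d+2)}\, c_\delta^{-d/(2(d+2))}\log(12n/\delta)^{1/(d+2)}
= \sqrt{4\sigma^2\psi_d}\,\bigl(c_\delta\log(12n/\delta)/n\bigr)^{1/(d+2)},
\]
because $\tfrac12-\tfrac{d}{2(d+2)}=\tfrac1{d+2}$.

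\textbf{Bias term.} It is bounded by
\[
2L(\mathcal V(x))\,\bigl(32d/(bc_dV_d)\bigr)^{1/d}\,\bigl(\log(12n/\delta)/m\bigr)^{1/d}.
\]
Substituting $m$ gives
\[
\bigl(\log(12n/\delta)/m\bigr)^{1/d} = (c_\delta/n)^{1/(d+2)}\log(12n/\delta)^{1/d-2/(d(d+2))} = \bigl(c_\delta\log(12n/\delta)/n\bigr)^{1/(d+2)},
\]
since $\tfrac1d-\tfrac{2}{d(d+2)}=\tfrac{1}{d+2}$.

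Adding the two terms gives the constant $C$ as stated. The $\psi_d$ in the corollary is written with $2^{d-1}$; it differs from the theorem's $\psi_d$ only by a factor $2^{-d}$. Since the conclusion is only claimed up to $\lesssim$, this discrepancy is harmless.

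The only delicate point is confirming that ``$n$ large'' suffices uniformly for fixed $\delta$ in both side conditions. Both reduce to a polynomial dominating a logarithm, so there is no real obstacle. The probability $1-5\delta$ is inherited directly from the theorem.
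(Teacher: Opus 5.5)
Your proposal is correct and follows the paper's own argument. The paper likewise bounds $\log(12m/\delta)$ by $\log(12n/\delta)$, chooses $m$ so that $\sqrt{c_\delta m/n}$ matches $(\log(12n/\delta)/m)^{1/d}$, and substitutes; you simply make explicit the exponent arithmetic and the large-$n$ check of the side conditions that the paper leaves implicit. One small correction: there is no $\psi_d$ discrepancy, since $(2^d\cdot 18Md)^d(2c_db)^{-d} = \bigl(2^{d-1}\cdot 18Md/(c_db)\bigr)^d$, so the theorem's and the corollary's definitions coincide exactly.
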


The previous results are, to the best of our knowledge, the first concentration bounds on the error of the Proto-NN regression estimator.  As pointed out in \cite[Section 3]{gyorfi2021universal}, ``obtaining convergence rates for the universally consistent Proto-NN classifier [...] is currently an open research problem'', that the authors bypass by considering another algorithm that is simpler to analyze and that they term ``Proto-$k$-NN''. 

The key step in the proof is to get a lower bound on $P^X ( \mathcal V (x)) $. This step involves the control of some order statistics of the distances between pairs of prototype variables. The analysis exhibits a quite poor scaling -- i.e., far from exponential -- of the probability at which the minimax rate holds. A similar situation is observed for Mondrian trees and we believe that this cannot be much improved for these estimators. Modifying the definition of the Proto-NN estimator in order to improve the probability bound will be the subject of a forthcoming work. 

Concerning the shape regularity theory developed in previous sections, the Proto-NN algorithm is based on a \(\gamma\)-regular cell $\mathcal{V}(x)$ with high probability, in the sense that there exists a constant \(c  > 0\) such that, with probability at least \(1 - 2\delta\),  
$$\diam(\mathcal V(x))^d \leq c \frac{\log(m/\delta)}{\delta^d} \lambda(\mathcal V(x)).$$ 
This implies that \(\gamma\)-regularity (in probability) holds with a parameter \(\gamma = c \log(m/\delta)/\delta^d\) that is polynomial in $1/\delta$, which is in line with the poor scaling of the probability rate in the concentration bound of Theorem \ref{th:main_th_proto}.

Another approach studied in \cite{NIPS2017_934815ad,hanneke2021universal} and called OptiNet, consists in creating  prototype covariates $(Z_j(\eta))_{j=1,\ldots, m(\eta)} $ as a maximal $\eta$-net  subset of $(Z_j)_{j=1,\ldots, m}$, for which the minimum spacing between the elements, $\min_{j\neq k} \|Z_j(\eta)- Z_k(\eta)\|$, is larger than $\eta$. The prototype labels are then created in the same way as for Proto-NN, by averaging the labels inside the Voronoi cells obtained from $(Z_j(\eta))_{j=1,\ldots, m(\eta)} $. Let $\mathcal V_\eta (x) $ be the Voronoi cell of $x$, with respect to the sample $Z(\eta)=(Z_i(\eta))_{i=1,\dots,m(\eta)} $. The OptiNet prediction rule is given by $$\forall x \in S_X, \quad \hat g _{\textrm{opt}}(x) = \frac{\sum_{i=1} ^n Y_i  \ind_{ \mathcal  V_\eta(x) } (X_i )  }{\sum_{i=1} ^n    \ind_{\mathcal  V_\eta(x) } (X_i)  }$$ with the convention that $0/0 = 0$. For the OptiNet algorithm, we obtain the following error bound.

\begin{theorem}\label{th:main_th_OptiNet}
    Suppose that \ref{cond:D}, \ref{cond:D1}, \ref{cond:reg4}, \ref{cond:reg2} and \ref{cond:reg3} hold true. Let $\delta \in (0,1/4)$ and $x \in S_X$. If $n \geq 1$, $m \geq 1$ are such that 
    \begin{equation*}
     32d\log(12m/\delta) \leq  T_0^d  m b c_d V_d, \quad \eta \leq 2T_0 \quad \text{ and } \quad n b V_d c_d \eta^d \geq 2^{d+3} \log(1/\delta), 
    \end{equation*}
then with probability at least $1-4\delta$,
        $$|\hat g_{\textrm{opt}} (x) - g(x) | \leq\sqrt{\dfrac{2^{d+2} \sigma^2 \log(1/\delta)}{n b V_d c_d \eta^d} }+ 2 L(\mathcal{V}_\eta(x)) \left\{  \eta +  \left(\dfrac{32d \log(12m/\delta)}{mb c_d V_d} \right)^{1/d} \right\}.$$
\end{theorem}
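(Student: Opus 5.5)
The plan is to apply Corollary \ref{sousgauss} conditionally on the prototype sample $(Z_j)_{j=1,\ldots,m}$. This is legitimate because the OptiNet partition is independent of $(X_i,Y_i)_{i}$ and \ref{cond:reg3} gives conditional sub-Gaussianity. We then need two geometric facts about $\mathcal V_\eta(x)$:
\begin{itemize}
\item[(a)] a lower bound on $P^X(\mathcal V_\eta(x))$ of order $b c_d V_d (\eta/2)^d$;
\item[(b)] an upper bound on $\diam(\mathcal V_\eta(x))$ of order $2\{\eta + r_m\}$, with $r_m := (32d\log(12m/\delta)/(m b c_d V_d))^{1/d}$.
\end{itemize}
Combining these with the corollary at confidence $\delta$ costs $3\delta$. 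The event in (b) costs at most $\delta$, giving total probability $1-4\delta$. Note that (a) holds deterministically, which is why OptiNet loses one fewer $\delta$ than Proto-NN.

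\textbf{Step (a), volume.} Let $Z_k(\eta)$ be the center of $\mathcal V_\eta(x)$. Since distinct net points are at distance larger than $\eta$, every point of $B(Z_k(\eta),\eta/2)$ is strictly closer to $Z_k(\eta)$ than to any other net point. Hence $B(Z_k(\eta),\eta/2)\cap S_X\subset \mathcal V_\eta(x)$; ties on the boundary have measure zero. By \ref{cond:reg2}, using $\eta/2\le T_0$ and $Z_k(\eta)\in S_X$,
\[
P^X(\mathcal V_\eta(x))\ge b\,c_d V_d(\eta/2)^d .
\]
The assumption $n b V_d c_d\eta^d\ge 2^{d+3}\log(1/\delta)$ is exactly $nP^X(\mathcal V_\eta(x))\ge 8\log(1/\delta)$. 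The variance term of Corollary \ref{sousgauss} then becomes $\sqrt{2^{d+2}\sigma^2\log(1/\delta)/(n b V_d c_d \eta^d)}$, matching the statement. This step is deterministic given the net, so no probability is lost.

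\textbf{Step (b), diameter.} First we show that with probability at least $1-\delta$, every point $y\in S_X$ has some raw prototype $Z_j$ within distance $r_m$. Cover $S_X$ by a finite grid of balls, or argue on a net of points of $S_X$ of cardinality polynomial in $m/\delta$. For a fixed center $y$, \ref{cond:reg2} gives $P^X(B(y,r))\ge b c_d V_d r^d$, so
\[
\PP\bigl(\text{no } Z_j \in B(y,r)\bigr)\le \exp(-m b c_d V_d r^d).
\]
A union bound with $r=r_m/2$ over about $(12m/\delta)^{d}$-many centers yields the $32d\log(12m/\delta)$ constant. The condition $32d\log(12m/\delta)\le T_0^d m b c_d V_d$ ensures $r_m\le T_0$, so the density lower bound applies. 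This is the same covering lemma used for Theorem \ref{th:main_th_proto}, and I would reuse it.

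By maximality of the $\eta$-net, each $Z_j$ lies within $\eta$ of some net point. Hence every $y\in S_X$ is within $\eta+r_m$ of the net. For $y\in\mathcal V_\eta(x)$, the center $Z_k(\eta)$ is the nearest net point, so $\|y-Z_k(\eta)\|\le \eta+r_m$. The triangle inequality then gives $\diam(\mathcal V_\eta(x))\le 2(\eta+r_m)$, and the bias term $L(\mathcal V_\eta(x))\diam(\mathcal V_\eta(x))$ gives the second part of the bound.

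The main obstacle is the uniform covering statement in (b), which must hold simultaneously for all $y$ in the cell and not just at $x$. I would handle it by discretizing $S_X\subset B(0,\rho_0)$ at scale $r_m/2$. The finer details are absorbing $\rho_0$ into the logarithm and keeping the constants consistent with those in the statement.
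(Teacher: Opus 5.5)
Your proposal matches the paper's proof essentially step for step. The paper likewise uses the $\eta/2$-ball around the net centre lying inside the cell (its Fact 2) together with \ref{cond:reg2} to get the deterministic bound $P^X(\mathcal V_\eta(x))\ge b c_d V_d\eta^d/2^d$, and applies Corollary \ref{sousgauss} conditionally on the prototypes at a cost of $3\delta$. It then bounds $\diam(\mathcal V_\eta(x))\le 2\sup_{y\in S_X} d(y,\{Z_1,\dots,Z_m\})+2\eta$ via maximality of the net (its Fact 1) plus a uniform nearest-neighbour radius bound, at a cost of $\delta$.

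The one caveat is that the paper obtains this uniform radius bound, with exactly the constant $32d\log(12m/\delta)$, by citing Lemma 3 in \cite{portier2021nearest}, a uniform $k$-NN radius bound. Your grid argument does not deliver that constant as sketched. With balls of radius $r_m/2$, the per-centre exponent is only $32d\log(12m/\delta)/2^d$, which drops below $\log(12m/\delta)$ once $2^d>32d$, i.e.\ $d\ge 9$. The union bound over the order $m/\log(12m/\delta)$ centres then fails. So you should rely on that lemma, as you indicated you would.
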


Note that the OptiNet algorithm has the same rate of convergence as Proto-NN, but the above upper bound holds with a higher probability compared to the one of Proto-NN. This is a consequence of the $\eta$-net construction, which allows the control of the volume of the Voronoi cells, that is larger than $\eta^d$, in a better way than for Proto-NN. 
Optimizing in $\eta$ and $m$ the upper bound, the order of the optimal choice corresponds to $\eta = n^{-1/(d+2)}$ and $m \geq  n^{d/(d+2)}$, which yields an upper bound of order $n^{-1/(d+2)}$ up to some logarithmic terms.
Note that the previous choice of $m $ and $\eta$ automatically satisfies the condition of Theorem \ref{th:main_th_OptiNet} when $n$ is large enough. 

Let us take \(1/\delta = n \log(n)^2\) and \(\eta = (\log(n)/n)^{1/(d+2)}\). Choose $m$ at least larger than \(n^{d/(d+2)}\) so that $\log(m/\delta)/{m} \leq \eta^d$ (this is ensured as soon as \(m \gtrsim n^{d/(d+2)} \log(n)^{2/(d+2)}\)). In this way, the condition on $m$ in Theorem \ref{th:main_th_OptiNet} is satisfied for large enough $n$, and by the Borel–Cantelli Lemma, we obtain that for each \(x \in S_X\), almost surely  
$$|\hat g_{\textrm{opt}} (x) - g(x) | \leq C \,  \left(\frac{ \log(n)}{n}  \right)^{1/(d+2)}$$  
where \(C > 0\) is a constant depending on all the problem parameters, but independent of $n$.

The underlying partition $\mathcal{V}_\eta$ of the OptiNet algorithm  satisfies the $\gamma$-shape regularity in probability, in the sense that there exists a constant \(c > 0\) such that, with probability at least \(1 - \delta\),  
$$\diam(\mathcal V_\eta(x))^d  \leq c \lambda(\mathcal V_\eta(x))\;,$$
whenever $\log(m/\delta)/m \leq \eta^d.$
In particular, for the above choices of $\delta, \eta$, and $m$, we have that, almost surely, for large enough $n$, the cell \(\mathcal V_\eta(x)\) constructed by the OptiNet algorithm is $\gamma$ shape-regular with $\gamma = c$.

\bibliography{b2}

@book{devroye96probabilistic,
  title={A probabilistic theory of pattern recognition},
  author={Devroye, Luc and Gy{\"o}rfi, L{\'a}szl{\'o} and Lugosi, G{\'a}bor},
  volume={31},
  year={1996},
  publisher={Springer Science \& Business Media}
}

@article{lakshminarayanan2014mondrian,
  title={Mondrian forests: Efficient online random forests},
  author={Lakshminarayanan, Balaji and Roy, Daniel M and Teh, Yee Whye},
  journal={Advances in neural information processing systems},
  volume={27},
  year={2014}
}

@inproceedings{kerem2023error,
  title={On error and compression rates for prototype rules},
  author={Kerem, Omer and Weiss, Roi},
  booktitle={Proceedings of the AAAI Conference on Artificial Intelligence},
  volume={37},
  pages={8228--8236},
  year={2023}
}

@article{portier2021nearest,
  title={Nearest neighbor process: weak convergence and non-asymptotic bound},
  author={Portier, Fran{\c{c}}ois},
  journal={arXiv preprint arXiv:2110.15083},
  year={2021}
}

@article{arlot2014analysis,
  title={Analysis of purely random forests bias},
  author={Arlot, Sylvain and Genuer, Robin},
  journal={arXiv preprint arXiv:1407.3939},
  year={2014},
    number ={},
    volume={},
    pages={}
}

@techreport{breiman2000some,
  title={Some infinity theory for predictor ensembles},
  author={Breiman, Leo},
  year={2000},
  institution={Citeseer}
}

@book {boucheron2013concentration,
    AUTHOR = {Boucheron, St\'{e}phane and Lugosi, G\'{a}bor and Massart,
              Pascal},
     TITLE = {Concentration inequalities. A nonasymptotic theory of independence},
       PUBLISHER = {Oxford University Press, Oxford},
      YEAR = {2013},
     PAGES = {x+481},
      ISBN = {978-0-19-953525-5},
   MRCLASS = {60E15 (60D05 60G15 60G50 60G70 62G20)},
}

@book{anderson,
  title={Some nonparametric multivariate procedures based on statistically equivalent blocks},
  author={Anderson, T. W.},
  publisher={Multivariate Analysis (P. R. Krishnaiah, ed), 5-27, Academic Press, New York.},
  year={1966}
}

@inproceedings{NIPS2014_8c19f571,
 author = {Gottlieb, Lee-Ad and Kontorovich, Aryeh and Nisnevitch, Pinhas},
 booktitle = {Advances in Neural Information Processing Systems},
 editor = {Z. Ghahramani and M. Welling and C. Cortes and N. Lawrence and K.Q. Weinberger},
 pages = {},
 publisher = {Curran Associates, Inc.},
 title = {Near-optimal sample compression for nearest neighbors},
 volume = {27},
 year = {2014}
}

@inproceedings{NIPS2017_934815ad,
 author = {Kontorovich, Aryeh and Sabato, Sivan and Weiss, Roi},
 booktitle = {Advances in Neural Information Processing Systems},
 editor = {I. Guyon and U. Von Luxburg and S. Bengio and H. Wallach and R. Fergus and S. Vishwanathan and R. Garnett},
 pages = {},
 publisher = {Curran Associates, Inc.},
 title = {Nearest-Neighbor Sample Compression: Efficiency, Consistency, Infinite Dimensions},
 url = {https://proceedings.neurips.cc/paper_files/paper/2017/file/934815ad542a4a7c5e8a2dfa04fea9f5-Paper.pdf},
 volume = {30},
 year = {2017}
}

@article{BPS,
  title={Revisiting local regression: shape regularity, uniform rates, and the limits of random splits},
  author={Bettinger, Jérémy and Portier, François and Saumard, Adrien},
  journal={arXiv preprint arXiv:2606.28641},
  volume={},
  number={},
  pages={},
  year={2026},
  publisher={}
}

@misc{xue2018achieving,
      title={Achieving the time of $1$-NN, but the accuracy of $k$-NN}, 
      author={Lirong Xue and Samory Kpotufe},
      year={2017},
      eprint={1712.02369},
      archivePrefix={arXiv},
      primaryClass={math.ST},
      url={https://arxiv.org/abs/1712.02369}, 
}

@article{cerou2006nearest,
  title={Nearest neighbor classification in infinite dimension},
  author={Frédéric Cérou and Arnaud Guyader},
  journal={ESAIM: Probability and Statistics.},
  year={2006},
  DOI= "10.1051/ps:2006014",
  url= "https://doi.org/10.1051/ps:2006014",
}

@inproceedings{mondrianroy2008,
  title={The {M}ondrian process},
  author={Roy, Daniel M and Teh, Yee W},
  booktitle={Advances in {N}eural {I}nformation {P}rocessing {S}ystems},
  pages={1377--1384},
  year={2008}
}

@book{breiman1984classification,
  title={Classification and Regression Trees},
  author={Breiman, Leo and Friedman, Jerome and Stone, Charles J and Olshen, RA},
  year={1984},
  publisher={CRC Press}
}

@article{mondrian,
  title={Universal consistency and minimax rates for online Mondrian Forests},
  author={Jaouad Mourtada and Stéphane Gaïffas and Erwan Scornet},
  journal={Advances in Neural Information Processing Systems 30},
  year={2017},
  publisher={}
}

@article{minimaxmondrian,
  title={Minimax optimal rates for Mondrian trees and forests},
  author={Jaouad Mourtada and Stéphane Gaïffas and Erwan Scornet},
  journal={Annals of Statistics},
    number ={},
    volume={},
    pages={},
  year={2019}
}

@inproceedings{klusowski2021sharp,
  title={Sharp analysis of a simple model for random forests},
  author={Klusowski, Jason},
  booktitle={International Conference on Artificial Intelligence and Statistics},
  pages={757--765},
  year={2021},
  organization={PMLR}
}

@article{hanneke2021universal,
  title={Universal Bayes consistency in metric spaces},
  author={Hanneke, Steve and Kontorovich, Aryeh and Sabato, Sivan and Weiss, Roi},
  journal={The Annals of Statistics},
  volume={49},
  number={4},
  pages={2129--2150},
  year={2021},
  publisher={Institute of Mathematical Statistics}
}

@article{gyorfi2021universal,
  title={Universal consistency and rates of convergence of multiclass prototype algorithms in metric spaces},
  author={Gy{\"o}rfi, L{\'a}szl{\'o} and Weiss, Roi},
  journal={Journal of Machine Learning Research},
  volume={22},
  number={151},
  pages={1--25},
  year={2021}
}

@article{biau2012analysis,
  title={Analysis of a random forests model},
  author={Biau, G{\'e}rard},
  journal={The Journal of Machine Learning Research},
  volume={13},
  number={1},
  pages={1063--1095},
  year={2012},
  publisher={JMLR. org}
}

@article{biau2016random,
  title={A random forest guided tour},
  author={Biau, G{\'e}rard and Scornet, Erwan},
  journal={Test},
  volume={25},
  pages={197--227},
  year={2016},
  publisher={Springer}
}

\newpage 

\section*{Mathematical proofs} 
Let $\PP$ be the probability measure on the underlying probability space $(\Omega, \mathcal F)$ on which are defined all introduced random variables.

\subsection*{Proof of Theorem \ref{sousgauss3}}
Let \(x \in S_X\). Set $\varepsilon_i := Y_i - g(X_i)$ for each $i=1,\ldots, n$.  We write the bias-variance decomposition 
$\hat g_{\mathcal V}(x) - g(x) = W + B$, where \begin{align*}
    W := \dfrac{\sum_{i=1}^n \varepsilon_i \mathds 1 _{ \mathcal V(x) }(X_i)}{\sum_{j=1}^n \mathds 1 _{ \mathcal V(x) }(X_j)} \qquad \text{and}\qquad
       B := \dfrac{\sum_{i=1}^n \left(g(X_i) - g(x)\right) \mathds 1 _{ \mathcal V(x) }(X_i)}{\sum_{j=1}^n \mathds 1 _{ \mathcal V(x) }(X_j)}.
\end{align*}

Let us revisit the idea of the proof of Theorem 6 in \cite{BPS}, here, however we are not dealing with a uniform version. For all $i \in \{1, \dots, n\}$, let us denote $g_i = \ind_{\mathcal{V}(x)}(X_i)$ and \( \mathbb{P}_{X_{1:n}} \) the probability \( \mathbb{P} \) conditional on \( {X}_1, \dots, {X_n} \). 
Since the conditional distribution of $\varepsilon_i$ given $X_1,\ldots,X_n$ is sub-Gaussian with parameter $\sigma^2$, then  $\varepsilon_i g_i$ is sub-Gaussian under $\pr_{X_{1:n}}$, with parameter $\sigma^2 g_i^2$. Hence, by conditional independence given $(X_i)_{i = 1,\ldots, n}$, ${\sum_{i=1}^n \varepsilon_i g_i}/{\sqrt{ \sum_{j=1}^n g_j}}$ is sub-Gaussian with parameter $\sigma^2 \sum_{i=1}^n g_i^2 / \sum_{j=1}^n g_j$. Indeed, $(Y_i) _{i = 1,\ldots, n} $ (and so $(\varepsilon_i) _{i = 1,\ldots, n} $) is an independent collection of random variables, conditionally on $(X_i)_{i = 1,\ldots, n}$. We prove this fact in Lemma \ref{lemmaA}.  Moreover, $\sum_{i=1}^n g_i^2 = \sum_{i=1}^n g_i$ because $g_i\in \{ 0,1\} $. Hence, ${\sum_{i=1}^n \varepsilon_i g_i}/{\sqrt{ \sum_{j=1}^n g_j}}$ is sub-Gaussian with parameter $\sigma^2$ under $\pr_{X_{1:n}}$. It follows that
\begin{align*}
    \mathbb{P}_{X_{1:n}}\left(\dfrac{\sum_{i=1}^n \varepsilon_i \ind_{ {\mathcal{V}(x)} }(X_i)}{\sqrt{\sum_{j=1}^n \ind_{ {\mathcal{V}(x)} }(X_j)}} > t  \right)  \leq \exp\left( \dfrac{-t^2}{2\sigma^2}\right) = \delta,
\end{align*}
   with  $t = \sqrt{2\sigma^2 \log(1/\delta)}$.
 Integrating with respect to $X_1,\dots, X_n$, we obtain the same inequality with $\mathbb P $ instead of $\mathbb{P}_{X_{1:n}}$. By symmetry, we obtain the result with absolute values with probability at least \( 1 - 2\delta \). 
 We have shown that with probability at least $1-2\delta$,
\begin{align}\label{eq1}
    \left| \dfrac{ \sum_{i=1}^n \varepsilon_i \ind_{{\mathcal{V}(x)}}(X_i) } {\sqrt{\sum_{j=1}^n \ind_{ {\mathcal{V}(x)} }(X_j)}}  \right| \leq  \sqrt{2\sigma^2 \log(1/\delta)}.
\end{align}
Then with probability at least $1-2\delta$ we have, 
$$|W| = \left| \dfrac{ \sum_{i=1}^n \varepsilon_i \ind_{{\mathcal{V}(x)}}(X_i) } {{\sum_{j=1}^n \ind_{ {\mathcal{V}(x)} }(X_j)}}  \right| \leq  \sqrt{\dfrac{2\sigma^2 \log(1/\delta)}{nP_n^X({\mathcal{V}(x)})}}.$$

Furthermore, using the triangle inequality, we obtain that
\begin{eqnarray*}
    |B| &\leq& \dfrac{\sum_{i=1}^n \left|g(X_i) - g(x)\right| \mathds 1 _{ \mathcal V(x) }(X_i)}{\sum_{j=1}^n \mathds 1 _{ \mathcal V(x) }(X_j)} \\ &\leq& \dfrac{\sum_{i=1}^n \sup_{y \in \mathcal V(x)} |g(y) - g(x)| \mathds 1 _{ \mathcal V(x) }(X_i)}{\sum_{j=1}^n \mathds 1 _{ \mathcal V(x) }(X_j)} = \sup_{y \in \mathcal V(x)} |g(y) - g(x)|.
\end{eqnarray*}
Moreover, using the Lipschitz assumption, it follows that
$$|g(y) - g(x)| \leq  L(\mathcal V(x)) \|x-y\|_2  \leq L(\mathcal V(x)) \diam  (\mathcal V(x) ) , $$
which concludes the proof. Note that the proof of result \eqref{eq1} remains valid for $\mathcal{V}$ created by variables $\xi_i$ independent of the $X_i$. Indeed, it suffices to reason conditionally on these variables $\xi_i$ and then integrate with respect to $\xi_i$ and $X_i$.

\subsection*{Proof of Corollary \ref{sousgauss}}
According to Theorem \ref{sousgauss3}, it now remains to show that, for the measurable set $\mathcal{V}(x)$, we have with probability at least $1-\delta$,
\begin{align}\label{eq2}
   \sum_{j=1}^n \ind_{ \mathcal{V}(x) }(X_j) = n 
P_n^X(\mathcal{V}(x))  \geq  \dfrac{n P^X(\mathcal{V}(x))}{2}.
\end{align}
Indeed, it can easily be seen that \eqref{eq1} and \eqref{eq2} imply the stated inequality and these inequalities hold together with probability at least $1-3\delta$. 

Define $W_i = \ind_{ \mathcal{V}(x) } (X_i) $. Note that $W_1,\ldots, W_n$ is an independent and identically distributed collection of Bernoulli variables with parameter $\mu = P^X (\mathcal{V}(x))  $.
 We have the following inequality for any \( \theta \in (0,1) \)
\[
\PP \left(\sum_{i=1}^n W_i \leq (1 - \theta) n \mu  \right) \leq e^{-\theta^2 n \mu /2}.
\]  
Furthermore, for any \( \delta \in (0,1) \), we have  
\[
\PP  \left( \frac{1}{n} \sum_{i=1}^n W_i \leq \left[1 - \sqrt{\frac{2\log(1/\delta)}{ n  \mu }} \right]\mu  \right) \leq \delta.
\]  
Since $n\mu  \geq 8 \log(1/\delta)$, we obtain with probability at least $1-\delta$,
$ \sum_{i=1}^n W_i >  n \mu / 2$ which yields \eqref{eq2} and concludes the proof. Note that we have established the following: Under \ref{cond:D} and \ref{cond:epsilon}, if $ V  $ is a measurable set such that $ nP^X(V )  \geq 8 \log(1/\delta) $, we have, with probability at least $1-3\delta$,
\begin{align}\label{eq10}
|W| = \left| \dfrac{ \sum_{i=1}^n \varepsilon_i \ind_{V}(X_i) } {{\sum_{j=1}^n \ind_{ V }(X_j)}}  \right| \leq  \sqrt{\dfrac{4\sigma^2 \log(1/\delta)}{nP^X(V)}}.    
\end{align}
It is worth noting that, similarly, this result generalizes to a partition created by a source independent of that of the $X_i$ by conditioning.

\subsection*{Proof of Proposition \ref{prop:diam_CRT}}

First notice that, by a union bound and symmetry in the directions, we have \[ \pr(\diam (\mathcal{V}(x))\geq t) \leq d\pr \left(h_1 \geq \frac{t}{\sqrt{d}}\right). \]
Furthermore, by denoting $B_i^{(1)}=\ind_{D_i=1}$, we get for any $r\in (0,1)$ and $\lambda>0$,
\begin{align*}
    \pr(h_1 \geq r^N) & = \pr\left(\prod_{i=1}^N 2^{-B_i^{(1)}} \geq r^N\right)\\
    & \leq \mathbb E\left[\left(\frac{\prod_{i=1}^N 2^{-B_i^{(1)}}}{r^N}\right)^\lambda \right]=\left(\frac{\mathbb E\left[{2^{-\lambda B_1^{(1)}}}\right]}{r^{\lambda}}\right)^N. 
\end{align*}
It holds
\[
 \mathbb E\left[{2^{-\lambda B_1^{(1)}}}\right]= \frac{1}{d2^{\lambda}}+1-\frac{1}{d}.
\]
Hence,
\[
 \pr(h_1 \geq r^N) \leq \left(\frac{1}{d2^{\lambda}}+1-\frac{1}{d}\right)^N r^{-\lambda N}.
\]
Let us set $r=2^{-\alpha}$ and define 
\[
h(\lambda)= Q(\lambda) 2^{\lambda \alpha}
\]
with
\[
Q(\lambda)= \frac{1}{d2^{\lambda}}+1-\frac{1}{d}.
\]
By differentiating in $\lambda$, we get
\[
h^{\prime}(\lambda)=\log(2) 2^{\lambda \alpha}\left(\alpha Q(\lambda)-\frac{1}{d2^\lambda}\right).
\]
Hence, $h^{\prime}(\lambda_0)=0$ for $\lambda_0$ such that $2^{-\lambda_0}=\theta=\alpha (d-1)/(1-\alpha)$ and $\alpha \in (0,1/d)$. With this choice of $\lambda$, 
\[
 \PP(\diam(\mathcal{V}(x))\geq \sqrt{d}2^{-\alpha N}) \leq d\left(1-\frac{1-\theta}{d}\right)^N \theta^{-\alpha N}.
\]
We proceed in the same way as before for the diameter upper bound. By a union bound and symmetry in the directions, we have
\[
 \pr(  \diam (\mathcal{V}(x))\leq t) \leq d\pr \left(h_1 \leq \frac{t}{\sqrt{d}}\right).
\]
Then, for any $r\in (0,1)$ and $\lambda>0$,
\begin{align*}
    \pr(h_1 \leq r^N) & = \pr\left(\prod_{i=1}^N 2^{B_i^{(1)}} \geq r^{-N}\right)\\
    & \leq \mathbb E\left[\left(\frac{\prod_{i=1}^N 2^{B_i^{(1)}}}{r^{-N}}\right)^\lambda \right]=\left(\frac{\mathbb E\left[{2^{\lambda B_1^{(1)}}}\right]}{r^{-\lambda}}\right)^N. 
\end{align*}
It holds
\[
 \mathbb E\left[{2^{\lambda B_1^{(1)}}}\right]= \frac{2^{\lambda}}{d}+1-\frac{1}{d}.
\]
Hence,
\[
 \pr(h_1 \leq r^N) \leq \left(\frac{2^{\lambda}}{d}+1-\frac{1}{d}\right)^N r^{\lambda N}.
\]
Let us set $r=2^{-\alpha}$ and denote 
\[
h(\lambda)= Q(\lambda) 2^{-\lambda \alpha}
\]
with
\[
Q(\lambda)= \frac{2^{\lambda}}{d}+1-\frac{1}{d}.
\]
By differentiating in $\lambda$, we get
\[
h^{\prime}(\lambda)=\log(2) 2^{-\lambda \alpha}\left(\frac{2^\lambda}{d}-\alpha Q(\lambda)\right).
\]
Hence, $h^{\prime}(\lambda_0)=0$ for $\lambda_0$ such that $2^{\lambda_0}=\theta=\alpha (d-1)/(1-\alpha)$ and $\alpha \in (1/d , 1)$. With this choice of $\lambda$, 
\[
 \PP(\diam(\mathcal{V}(x))\leq \sqrt{d}2^{-\alpha N}) \leq d\left(1-\frac{1-\theta}{d}\right)^N \theta^{-\alpha N}.
\]
\qed

\subsection*{Proof of Proposition \ref{cor22}}
According to Proposition \ref{prop:diam_CRT}, for any $\alpha \in (1/d,1)$, we have, for $\theta = \alpha (d-1)/(1-\alpha)$,
\begin{equation*}
    \PP(\diam(\mathcal{V}(x))\leq \sqrt{d}2^{-\alpha N}) \leq d\left(1-\frac{1-\theta}{d}\right)^N \theta^{-\alpha N}.
\end{equation*}
Take now $\alpha=\alpha_N=1/d+\omega_N$, with $\omega_N \rightarrow_{N\rightarrow +\infty} 0$. In this case,
\[
\theta=\theta_N=\frac{\alpha_N(d-1)}{1-\alpha_N}=(d-1)\frac{1+d\omega_N}{d-1-d\omega_N}=1+a_d \omega_N+ b_d \omega_N^2+O(\omega_N^3),
\]
where $a_d=d^2/(d-1)$ and $b_d=d^3/(d-1)^2$. This gives
\[
\log\left(1-\frac{1-\theta}{d}\right)= \frac{a_d}{d} \omega_N+ \frac{b_d}{d}\omega_N^2 - \frac{a^2_d}{2d^2}\omega_N^2 + O(\omega_N^3).
\]
Furthermore $$\log(\theta) = a_d \omega_N + b_d \omega_N^2 -  \frac{a_d^2}{2} \omega_N^2 + O(\omega_N^3)$$
so $$\alpha \log(\theta) = \frac{a_d}{d} \omega_N + \frac{b_d}{d} \omega_N^2 -  \frac{a_d^2}{2d} \omega_N^2 + a_d \omega_N^2 + O(\omega_N^3).$$
Then 
\begin{eqnarray*}
    \log\left(1-\frac{1-\theta}{d}\right) - \alpha \log(\theta) &=& -\frac{a^2_d}{2d^2}\omega_N^2 + \frac{a_d^2}{2d} \omega_N^2 - a_d \omega_N^2 + O(\omega_N^3) \\ &=& - a_d \omega_N^2 \left( 1 + \frac{a_d}{2d^2} - \frac{a_d}{2d} \right) + O(\omega_N^3).
\end{eqnarray*}
Moreover 
$$1 + \frac{a_d}{2d^2} - \frac{a_d}{2d}  = 1 + \frac{1}{2(d-1)} - \frac{d}{2(d-1)} = 1 - \frac{1}{2} = \frac{1}{2}.$$
Finally
\begin{eqnarray*}
  \left(1-\frac{1-\theta}{d}\right)^N \theta^{-\alpha N} &=& \exp \left( N \log\left(1-\frac{1-\theta}{d}\right) - N \alpha \log(\theta)    \right) \\ &=& \exp \left( - \frac{a_d}{2} N \omega_N^2 + O(N \omega_N^3)  \right).  
\end{eqnarray*}
Choosing $\omega_N=2 \sqrt{\log(N)/(a_d N)} \in (0 , 1 - 1/d)$ for $N$ large enough, gives
$$\left(1-\frac{1-\theta}{d}\right)^N \theta^{-\alpha N} = \exp \left( - 2 \log(N) + O \left(\log(N)^{3/2} / \sqrt{N}   \right) \right) \underset{ {N \to + \infty}}\sim N^{-2}$$
and concludes the proof via the Borel-Cantelli Lemma. Furthermore, for the upper bound of the diameter, we also use Proposition \ref{prop:diam_CRT}. For any $\alpha \in (0,1/d)$, we have for $\theta = \alpha (d-1)/(1-\alpha)$,
\[
 \PP(\diam(\mathcal{V}(x))\geq \sqrt{d}2^{-\alpha N}) \leq d\left(1-\frac{1-\theta}{d}\right)^N \theta^{-\alpha N}.
\]
Let us take here $\alpha=\alpha_N=1/d-\omega_N$, with $\omega_N \rightarrow_{N\rightarrow +\infty} 0$. In this case,
\[
\theta=\theta_N=\frac{\alpha_N(d-1)}{1-\alpha_N}=(d-1)\frac{1-d\omega_N}{d-1+d\omega_N}=1-a_d \omega_N+ b_d \omega_N^2+O(\omega_N^3),
\]
where $a_d=d^2/(d-1)$ and $b_d=d^3/(d-1)^2$. This gives
\[
\log\left(1-\frac{1-\theta}{d}\right)= - \frac{a_d}{d} \omega_N+ \frac{b_d}{d}\omega_N^2 - \frac{a^2_d}{2d^2}\omega_N^2 + O(\omega_N^3).
\]
In addition, $$\log(\theta) = - a_d \omega_N + b_d \omega_N^2 -  \frac{a_d^2}{2} \omega_N^2 + O(\omega_N^3)\;,$$
so $$\alpha \log(\theta) = - \frac{a_d}{d} \omega_N + \frac{b_d}{d} \omega_N^2 -  \frac{a_d^2}{2d} \omega_N^2 + a_d \omega_N^2 + O(\omega_N^3)\;.$$
Now,
\begin{eqnarray*}
    \log\left(1-\frac{1-\theta}{d}\right) - \alpha \log(\theta) &=& -\frac{a^2_d}{2d^2}\omega_N^2 + \frac{a_d^2}{2d} \omega_N^2 - a_d \omega_N^2 + O(\omega_N^3) \\ &=& - a_d \omega_N^2 \left( 1 + \frac{a_d}{2d^2} - \frac{a_d}{2d} \right) + O(\omega_N^3).
\end{eqnarray*}
Moreover,
$$1 + \frac{a_d}{2d^2} - \frac{a_d}{2d}  = 1 + \frac{1}{2(d-1)} - \frac{d}{2(d-1)} = 1 - \frac{1}{2} = \frac{1}{2}.$$
Finally,
\begin{eqnarray*}
  \left(1-\frac{1-\theta}{d}\right)^N \theta^{-\alpha N} &=& \exp \left( N \log\left(1-\frac{1-\theta}{d}\right) - N \alpha \log(\theta)    \right) \\ &=& \exp \left( - \frac{a_d}{2} N \omega_N^2 + O(N \omega_N^3)  \right).  
\end{eqnarray*}
Choosing $\omega_N=2 \sqrt{\log(N)/(a_d N)}$ gives
$$\left(1-\frac{1-\theta}{d}\right)^N \theta^{-\alpha N} = \exp \left( - 2 \log(N) + O \left(\log(N)^{3/2} / \sqrt{N}   \right) \right) \underset{ {N \to + \infty}}\sim N^{-2}$$
and concludes the proof via the Borel-Cantelli Lemma.

The last inequality follows directly by invoking the two previous inequalities on diameter and volume. \qed

\subsection*{Proof of Theorem \ref{cor_UcR}}
We can apply Corollary \ref{sousgauss} pointwise for $x \in S_X$ and, with $\delta = n^{-2}$, we find that, whenever $n P^X (\mathcal{V}(x) ) / \log(n) \geq 16$, it holds that
$$ \sum_{n\geq 1} \mathbb P ( |\hat g_{\mathcal V}(x) - g(x)| > v_n ) < \infty,$$
where $$v_n = \sqrt{ 8 \sigma^2  \log( n  ) / (n  b \lambda ( \mathcal V(x)) )}  + L(\mathcal V(x)) \diam (\mathcal V(x))$$
since $P^X(\mathcal V(x)) \geq b \lambda(\mathcal V(x))$ by \ref{cond:density_XCART}.

Applying the Borel-Cantelli Lemma, we get that with probability $1$, for $n$ large enough,
$$|\hat g_{\mathcal V}(x) - g(x)| \leq  \sqrt{\frac{ 8 \sigma^2  \log( n)}{n  b \lambda ( \mathcal V(x)) }} + L(\mathcal V(x)) \diam (\mathcal V(x)).$$ Then, using \ref{cond:density_XCART}, it follows that $$ P^X (\mathcal{V}(x) ) \geq b \lambda(\mathcal{V}(x)) = b 2^{-N} = n^{-d/(d+2)} b $$ so $$ n P^X (\mathcal{V}(x) ) \geq n^{2/(d+2)} b.$$ Hence, we get that $ n P^X (\mathcal{V}(x) ) / \log(n) \geq 16$ for $n$ sufficiently large.

Then, we have the following inequality, with probability $1$, for $n $ large enough,
$$|\hat g_{\mathcal V}(x) - g(x)| \leq  \sqrt{\frac{ 8 \sigma^2  \log\left( n \right)}{n  b \lambda ( \mathcal V(x)) }} + L(\mathcal V(x)) \diam (\mathcal V(x)).$$
Now, from Proposition \ref{cor22}, for a sufficiently large $N$, we have
\begin{align*}
& \diam (\mathcal{V}(x)) \leq \sqrt d 2^{-N/d+2\sqrt{(d-1)N\log(N)/d^2}}, \\
& \lambda(\mathcal{V}(x)) = 2^{-N}.
\end{align*}
Hence, we get, with probability $1$, for $n$ large enough,
$$| \hat g_{\mathcal V}(x) - g(x)| \leq \sqrt{\frac{ 8 \sigma^2  \log\left( n \right)}{n b 2^{-N}}} + L(\mathcal V(x)) \sqrt d 2^{-N/d+2\sqrt{(d-1)N\log(N)/d^2}}.$$
Because \( N = d\log(n)/(\log(2)(d+2)) \), we obtain
\begin{align*}
    | \hat g_{\mathcal V}(x) - g(x)| &\leq n^{-1/(d+2)} \sqrt{\frac{ 8 \sigma^2  \log\left( n \right)}{b}} + n^{-1/(d+2)} L(\mathcal V(x)) \sqrt d e^{2\sqrt{(d-1)N\log(N)/d^2}}\\
& \leq n^{-1/(d+2)} \sqrt{\frac{ 8 \sigma^2  \log\left( n \right)}{b}} + n^{-1/(d+2)} L(\mathcal V(x)) \sqrt d e^{\sqrt{N\log(N)}}
\end{align*}
where we use the inequality $2 \sqrt{(d-1)/d^2} \leq 1$ since $(d-2)^2 \geq 0$.
For $n$ large enough, we have \( N = d\log(n)/(\log(2)(d+2)) \geq 8.\) Thus, this implies that $\log(n) = N \log(2) (d+2)/ d \leq 3 \log(2) N = \log(8) N \leq \log(N) N$. 
Moreover $N \leq 2 \log(n)$, and for $n$ large enough $N \leq \log(n)^2$ then $\log(N) \leq 2 \log \log(n).$ Finally $\log(n) \leq \log(N) N \leq  4 \log(n) \log(\log(n)).$ We conclude by using the inequality $\sqrt{x} \leq e^{\sqrt{x}}$ for $x= \log(n)$ and setting $C_x = \sqrt{{8 \sigma^2 }/b} +  L(\mathcal V(x)) \sqrt d \leq \sqrt{{8 \sigma^2 }/b} +  L \sqrt d = C$. \qed

\subsection*{Proof of Proposition \ref{centered tree not regular}}
Let $d \geq 2$. At each stage, for each terminal leaf, draw uniformly $D_i $ in $\{1,\ldots, d\}$ and split at the midpoint, i.e. $S_i = 1/2$. Then we divide the cell according to coordinate $k = D_i$. The corresponding length $h_k(\mathcal V (x) )$ is then updated into $h_k(\mathcal V (x) )/2$. As a consequence, for a given leaf, after $N$ stages,  the $k$-th length has the following representation 
$$ h _ k(\mathcal V (x) ) = 2^{-B_1^{(k)}}\times \ldots \times 2^{-B_N^{(k)}} = \exp\left( \sum_{i=1} ^ N B_i^{(k)} \log(1/2) \right)  $$
where $B_i^{(k)}  = \ind_{D_i = k }$. It follows that
\begin{align*}
 &h_+(\mathcal V (x) ) = \exp\left( \max_{k=1,\ldots, d}  \sum_{i=1} ^ N B_i^{(k)} \log(1/2) \right), \\
 &h_-(\mathcal V (x) ) =  \exp\left(\min_{k=1,\ldots, d}  \sum_{i=1} ^ N B_i^{(k)} \log(1/2) \right), 
\end{align*}
and the expression of the ratio is
\begin{align*}
 h_+(\mathcal V (x) )/h_-(\mathcal V (x) )  &= \exp \left(  \log(2) \max_{1 \leq k,j \leq d} \sum_{i=1} ^N (B_i^{(k)} - B_i^{(j)} ) \right). \end{align*}
By denoting $V_i^{k,j} = B_i^{(k)} - B_i^{(j)}$, we get
\[V_i^{k,j} = \begin{cases} 
1 & \text{with probability } 1/d \\
0 & \text{with probability } 1 - 2/d \\
-1 & \text{with probability } 1/d 
\end{cases}.\]
Note that the variables \((V_i^{k,j})_{i=1}^N\) are mutually independent because the \((D_i)_{i=1}^N\) are independent. 
Let \(Z_{k,j} = \sum_{i=1}^{N} V_i^{k,j} \log(2)\) such that
\[
\frac{h_+(\mathcal V (x) )}{h_-(\mathcal V (x) )} = \exp \left( \max_{1 \leq k,j \leq d} Z_{k,j} \right).
\]
Note that \(Z_{k,j} = - Z_{j,k}\) and \(Z_{k,k} = 0\), which gives
\[
\max_{1 \leq k,j \leq d} Z_{k,j} = \max_{1 \leq k < j \leq d} | Z_{k,j} |
\]
and thus the formula 
\[
\frac{h_+(\mathcal V (x) )}{h_-(\mathcal V (x) )} = \exp \left( \max_{1 \leq k < j \leq d} | Z_{k,j} | \right).
\]
By using the Paley-Zygmund inequality to $Z_{k,j}^2$, we get for all $\theta \in (0,1)$,
\[
\PP\left(|Z_{k,j}| \geq \sqrt{\theta} \sqrt{\mathbb{E}(Z_{k,j}^2)} \right) \geq (1-\theta)^2 \frac{\mathbb{E}(Z_{k,j}^2)^2}{\mathbb{E}(Z_{k,j}^4)}.
\]
We therefore seek to calculate the 2nd and 4th moments of \(Z_{k,j}\). \\ Since \(Z_{k,j}^2 = \sum_{i \neq \ell} V_i^{k,j} V_\ell^{k,j} \log(2)^2 + \sum_{i=1}^N V_i^{k,j \, 2} \log(2)^2 \), \(\mathbb{E}(V_i^{k,j}) = 0\), we obtain 
$$\mathbb E(Z_{k,j}^2) = \sum_{i=1}^N \mathbb{E}((V_i^{k,j})^2) \log(2)^2 = N \times \frac{2}{d} \times \log(2)^2 = \frac{2N}{d} \log(2)^2.$$

Moreover, we obtain with Lemma \ref{lemme moment},
\begin{eqnarray*}
    \mathbb{E}(Z_{k,j}^4) &=& N \mathbb{E}({V_i^{k,j \, 4}})\log(2)^4 + 3N (N-1) \mathbb{E}({V_i^{k,j \, 2}})^2 \log(2)^4 \\ &=& N \times \dfrac{2}{d} \times  \log(2)^4 + 3N(N-1) \left( \dfrac{2}{d}\right)^2 \times \log(2)^4 \\ &=& \dfrac{2N}{d^2} \log(2)^4 (6N - 6 + d).
\end{eqnarray*}
The Paley-Zygmund bound becomes
\[
\frac{\mathbb{E}(Z_{k,j}^2)^2}{\mathbb{E}(Z_{k,j}^4)} = \frac{4 \log(2)^4 N^2}{d^2} \times \frac{d^2}{2N \log(2)^4 (6N - 6 + d) }= \frac{2N}{6N - 6 + d}.
\]
Thus, for all \(\theta \in (0,1)\) and $N \geq d$,
\[
\PP\left(|Z_{k,j}| \geq \sqrt{\theta} \sqrt{\frac{2N \log(2)^2}{d}}\right) \geq (1-\theta)^2 \frac{2N}{6N - 6 + d} \geq \dfrac{2(1-\theta)^2}{7}.
\]
Let us choose \(\theta = 1/2\) to obtain
\[
\PP\left(|Z_{k,j}| \geq \log(2) \sqrt{\frac{N}{d}}\right) \geq \dfrac{1}{14}
\]
and thus for $N \geq d$,
\[
\PP\left(\dfrac{h_+(\mathcal V (x))}{h_-(\mathcal V (x))} \geq 2^{\sqrt{{N}/{d}}} \right) \geq \frac{1}{14}.
\]
Thus, with probability at least $1/14$, the ratio ${h_+(\mathcal V (x))}/{h_-(\mathcal V (x))} $ is bounded below by a quantity that grows exponentially towards infinity. This means that centered trees are not shape regular. \qed

\subsection*{Proof of Proposition \ref{prop:diamURT2}}

At each split step $i$, the exact proportion $\bar{S}_i$ by which the side length of the cell $\mathcal{V}(x)$ is reduced takes either the value $S_i$ or $1-S_i$, depending on whether the target point $x$ falls into the left or the right child cell. Then, we can almost surely bound this reduction from both above and below at each step
$$ m_i = \min(S_i, 1-S_i) \leq \bar{S}_i \leq \max(S_i, 1-S_i) = M_i. $$

Since the original split variables $(S_i)_{i \geq 1}$ are independent and identically distributed, it follows by construction that $(m_i)_{i \geq 1}$ and $(M_i)_{i \geq 1}$ are two sequences of mutually independent random variables, uniformly distributed over $[0, 1/2]$ and $[1/2, 1]$ respectively. These independent bounding sequences allow us to securely control the volume and the diameter of the cell $\mathcal{V}(x)$ without requiring explicit knowledge of the point's path through the tree.

Notice that, by a union bound and symmetry in the directions, we have \[ \pr(\diam (\mathcal{V}(x))\geq t) \leq d\pr \left(h_1 \geq \frac{t}{\sqrt{d}}\right). \]
Furthermore, by denoting $B_i^{(1)}=\ind_{D_i=1}$, we get for any $r\in (0,1)$ and $\lambda>0$,
$$    \pr(h_1 \geq r^N) = \pr\left(\prod_{i=1}^N \bar{S}_i^{B_i^{(1)}} \geq r^N\right) \leq \mathbb E\left[\left(\frac{\prod_{i=1}^N \bar{S}_i^{B_i^{(1)}}}{r^N}\right)^\lambda \right] \leq \mathbb E\left[\left(\frac{\prod_{i=1}^N M_i^{B_i^{(1)}}}{r^N}\right)^\lambda \right]. $$
Then, $$ \pr(h_1 \geq r^N) \leq \left(\frac{\mathbb E\left[{M_1^{\lambda B_1^{(1)}}}\right]}{r^{\lambda}}\right)^N$$
It holds
$$\mathbb E\left[{M_1^{\lambda B_1^{(1)}}}\right] = 1 - \frac{1}{d} + \frac{1}{d} \left( \frac{2 - 2^{-\lambda}}{\lambda + 1} \right).$$
Hence,
\[
 \pr(h_1 \geq r^N) \leq \left(1 - \frac{1}{d} + \frac{1}{d} \left( \frac{2 - 2^{-\lambda}}{\lambda + 1} \right)\right)^N r^{-\lambda N}.
\]
First note that it suffices to optimize the bound for $N=1$. Let us denote 
\[
Q(\lambda)= 1 - \frac{1}{d} + \frac{1}{d} \left( \frac{2 - 2^{-\lambda}}{\lambda + 1} \right)
\]
and 
\[
h(\lambda)= Q(\lambda) r^{-\lambda}.
\]
Let $c = 1 - \log 2$. We can rewrite $Q(\lambda)$ as
\[
Q(\lambda) = 1 + \frac{1}{d} \left( \frac{1 - \lambda - 2^{-\lambda}}{\lambda + 1} \right).
\]
Using the convexity inequality $2^{-\lambda} = e^{-\lambda \log 2} \geq 1 - \lambda \log 2$ for all $\lambda \geq 0$, we can bound the numerator: $1 - \lambda - 2^{-\lambda} \leq -\lambda(1 - \log 2) = -c\lambda$. Therefore,
\[
Q(\lambda) \leq 1 - \frac{c\lambda}{d(1+\lambda)}.
\]
Moreover, using the inequality $1-x \leq \exp(-x)$ and $(1+\lambda)^{-1} \geq 1-\lambda$, we obtain $$Q(\lambda)  \leq \exp\left(\frac{-c\lambda}{d(1+\lambda)}\right) \leq \exp\left(\frac{-c\lambda(1-\lambda)}{d}\right).$$
Denote $r=(1/e)^{c/d-\theta}$ for $\theta>0$. Then the function $h(\lambda)= Q(\lambda) r^{-\lambda}$ can be bounded as follows
$$    h(\lambda) \leq  \exp\left(\lambda\left(\frac{c}{d}-\theta\right)-\frac{c\lambda(1-\lambda)}{d}\right) =   \exp\left(-\lambda\theta + \frac{c\lambda^2}{d}\right). $$
By taking $\lambda = {d\theta}/{(2c)}$, we get
\[ \mathbb{P}\left(h_1 \geq e^{N(\theta-c/d)}\right) \leq e^{-d\theta^{2}N / (4c)}.\]
Then for all $\theta > 0$, we obtain
\[
 \pr(\diam (\mathcal V(x))\geq \sqrt{d}e^{N(\theta-c/d)}) \leq d\pr \left(h_1 \geq e^{N(\theta-c/d)}\right) \leq d e^{-d\theta^{2}N/(4c)} \leq d e^{-d\theta^{2}N/2} .
\]
We now consider the lower bound. We proceed in the same way as before. By a union bound and symmetry in the directions, we have
\[
 \pr(\diam (\mathcal V(x))\leq t) \leq d\pr \left(h_1 \leq \frac{t}{\sqrt{d}}\right).
\]
Furthermore, for any $(r, \lambda)\in (0,1)^2$,
$$    \pr(h_1 \leq r^N) = \pr\left(\prod_{i=1}^N \bar{S}_i^{B_i^{(1)}} \leq r^N\right) = \pr\left(\prod_{i=1}^N \bar{S}_i^{-\lambda B_i^{(1)}} \geq r^{-\lambda N} \right) \leq \mathbb E\left[\left(\frac{\prod_{i=1}^N \bar{S}_i^{B_i^{(1)}}}{r^N}\right)^{-\lambda} \right].$$
Then, $$\pr(h_1 \leq r^N) \leq \mathbb E\left[\left(\frac{\prod_{i=1}^N m_i^{B_i^{(1)}}}{r^N}\right)^{-\lambda} \right] =\left(\frac{\mathbb E\left[{m_1^{-\lambda B_1^{(1)}}}\right]}{r^{-\lambda}}\right)^N.$$

It holds,
\[
 \mathbb E\left[{m_1^{-\lambda B_1^{(1)}}}\right] = 1 - \frac{1}{d} + \frac{2^\lambda}{d(1-\lambda)}.
\]
Hence,
\[
 \mathbb{P}(h_1 \leq r^N) \leq \left(1 - \frac{1}{d} + \frac{2^\lambda}{d(1-\lambda)}\right)^N r^{\lambda N}.
\]
Without loss of generality, we can optimize the bound for $N=1$. Let $C = 1 + \log 2$. Define 
$$Q(\lambda) = 1 - \frac{1}{d} + \frac{2^\lambda}{d(1-\lambda)} = 1 + \frac{1}{d} \left( \frac{2^\lambda - 1 + \lambda}{1-\lambda} \right),$$
and
$$ h(\lambda)= Q(\lambda) r^{\lambda}.$$
Using the convex inequality $2^\lambda \leq 1 + \lambda \log 2 + \lambda^2$ for $\lambda \in (0, 1/2)$, the numerator is bounded by $\lambda(1+\log 2) + \lambda^2 = C\lambda + \lambda^2$. Using $(1-\lambda)^{-1} \leq 1+2\lambda$ for $\lambda \in (0,1/2)$, we get
\[
\frac{2^\lambda - 1 + \lambda}{1-\lambda} \leq (C\lambda + \lambda^2)(1+2\lambda) = C\lambda + (2C+1)\lambda^2 + 2\lambda^3.
\]
Since $\lambda \in (0, 1/2)$, we have $2\lambda^3 \leq \lambda^2$. Consequently, the bound simplifies to $C\lambda + (2C+2)\lambda^2$. Knowing that $2C+2 = 4 + 2\log 2 \leq 6$, we obtain $Q(\lambda) \leq 1 + ({C\lambda + 6\lambda^2})/{d} \leq \exp(({C\lambda + 6\lambda^2})/{d})$. Set $r=(1/e)^{C/d+\theta}$ for $\theta>0$, then for all $\lambda \in (0, 1/2)$,
$$    h(\lambda) 
   \leq  \exp\left(-\lambda\left(\frac{C}{d}+\theta\right)+ \frac{C\lambda + 6\lambda^2}{d}\right) =\exp\left(-\lambda\left(\theta-\frac{6\lambda}{d}\right)\right).$$
By taking $\lambda= d\theta/12 \in (0, 1/2)$, we get
\[
 \mathbb{P}\left(h_1 \leq e^{-N(\theta+C/d)}\right) \leq e^{-d\theta^{2}N/24}.
\]
Then for all $\theta \in (0, 6/d)$,
\[
 \pr(\diam(\mathcal V(x))\leq \sqrt{d}e^{-N(\theta+C/d)}) \leq d\pr \left(h_1 \leq e^{-N(\theta+C/d)}\right) \leq d e^{-d\theta^{2}N/24}.
\] \qed

\subsection*{Proof of Proposition \ref{prop:diam_vol_URT2}}
As in the proof of Proposition \ref{prop:diamURT2}, we optimize along some polynomial moments controlling the deviation probability of interest. We have $\lambda(\mathcal V(x))=\prod_{i=1}^N \bar{S}_i$. For any $\alpha > 1$ and $\lambda \in (0,1)$,
\[
\PP\left(\lambda(\mathcal V(x))^{-1}\geq e^{N(\alpha + \log(2))}\right)\leq \EE\left[\prod_{i=1}^N m_i^{-\lambda}\right]e^{-N(\alpha + \log(2)) \lambda} = \left(\frac{2^\lambda e^{-(\alpha + \log(2)) \lambda}}{1-\lambda}\right)^N
\]
where $m_i = \min(S_i, 1-S_i) \sim \mathcal{U}(0,1/2)$. By taking $\lambda = 1 - 1/{\alpha} \in (0,1)$, we get
\[
\PP\left(\lambda(\mathcal V(x))^{-1}\geq e^{N(\alpha + \log(2))}\right)\leq (\alpha e^{1-\alpha})^N.
\]
Moreover, for any $\alpha \in (0, 1)$ and $\lambda > 0$,
\[
\PP\left(\lambda(\mathcal V(x)) \geq e^{-N(\alpha - \log(2))}\right)\leq \EE\left[\prod_{i=1}^N M_i^{\lambda}\right]e^{N(\alpha - \log(2)) \lambda} = \left(\frac{(2 - 2^{-\lambda}) e^{(\alpha - \log(2)) \lambda}}{1+\lambda}\right)^N
\]
where $M_i = \max(S_i, 1-S_i) \sim \mathcal{U}(1/2,1)$. Using the upper bound $2 - 2^{-\lambda} \leq 1 + \lambda \log(2) \le e^{\lambda \log(2)}$, and choosing $\lambda = {1}/{\alpha} - 1 > 0$ gives
\[
\mathbb{P}\left(\lambda(\mathcal{V}(x)) \geq e^{-N(\alpha - \log(2))}\right) \leq (\alpha e^{1 - \alpha})^N.
\]

\subsection*{Proof of Proposition \ref{cor_diamvolURT}}

We will use the Borel-Cantelli Lemma together with the inequalities obtained in Propositions \ref{prop:diamURT2} and \ref{prop:diam_vol_URT2}. To prove the upper bound on the diameter, we provide values $\theta_N$ leading to small enough probabilities. More precisely, by taking $\theta_N=2\sqrt{\log(N)/(dN)}$, we get $e^{-Nd\theta_N^2/2}=N^{-2}$.   Then  
    \begin{equation*}
    \PP\left(\diam (\mathcal{V}(x))\geq \sqrt{d}e^{N(-(1-\log(2))/d+\theta_N)}\right) \leq \dfrac{d}{N^2}.
\end{equation*} 
The Borel-Cantelli Lemma then gives $$\PP\left(\liminf_{N \to + \infty} \left\{ \diam (\mathcal{V}(x)) \leq \sqrt{d} e^{N(-(1-\log(2))/d + \theta_N)} \right\}\right) = 1.$$
This means that almost surely, beyond a certain rank, we have $$\diam (\mathcal{V}(x)) \leq \sqrt{d} e^{N(-(1-\log(2))/d + \theta_N)}.$$
For the lower bound on the diameter, we proceed in the same way with the choice $\tilde \theta_N=4\sqrt{3\log(N)/(dN)}$, or equivalently $e^{-Nd \tilde \theta_N^2/24}=N^{-2}$. We deduce that, almost surely, beyond a certain rank, $$\diam (\mathcal{V}(x)) \geq \sqrt{d} e^{-N((1+\log(2))/d + \tilde \theta_N)}.$$
Now, regarding the volume, we set $\alpha_N=1+2\sqrt{\log(N)/N}$ and we obtain \[
    \left(\alpha_N e^{1-\alpha_N}\right)^N = \exp\left( {-2\sqrt{N\log(N)}+N\log\left(1+2\sqrt{\log(N)/N}\right)}\right).\]
As $\log\left(1+2\sqrt{\log(N)/N}\right)=2\sqrt{\log(N)/N} - 2\log(N)/N+ O\left((\log(N)/N)^{3/2}\right)$, we get $$\left(\alpha_N e^{1-\alpha_N}\right)^N = \exp \left( -2\log(N) + O\left(\log(N)^{3/2} / \sqrt{N} \right) \right) \sim 1/N^2.$$
The Borel-Cantelli Lemma gives us that almost surely, beyond a certain rank $N_0$, we have
$$\forall N \geq N_0, \qquad  \lambda(\mathcal{V}(x)) \geq e^{-N (1+\log(2)) - 2\sqrt{N\log(N)}}.$$
For the upper bound on the volume, we set for $N \geq 9$, $\tilde \alpha_N=1 - 2\sqrt{\log(N)/N} \in (0,1)$ and we obtain \[
    \left(\tilde \alpha_N e^{1-\tilde \alpha_N}\right)^N = \exp\left( {2\sqrt{N\log(N)}+N\log\left(1-2\sqrt{\log(N)/N}\right)}\right).\]
As $\log\left(1-2\sqrt{\log(N)/N}\right)=-2\sqrt{\log(N)/N} - 2\log(N)/N+ O\left((\log(N)/N)^{3/2}\right)$, we get $$\left(\tilde \alpha_N e^{1-\tilde \alpha_N}\right)^N = \exp \left( -2\log(N) + O\left(\log(N)^{3/2} / \sqrt{N} \right) \right) \sim 1/N^2.$$
Again, the Borel-Cantelli Lemma implies that almost surely, beyond a certain rank $N_0$, we have
$$\forall N \geq N_0, \qquad \lambda(\mathcal{V}(x))  \leq e^{-N(1-\log(2)) + 2\sqrt{N\log(N)}}.$$
Finally, the last inequality stated in Proposition \ref{cor_diamvolURT} comes readily by using the two previous inequalities on the diameter and the volume. \qed

\subsection*{Proof of Theorem \ref{cor_URT}}

We can apply Corollary \ref{sousgauss} pointwise for $x \in S_X$ and, with $\delta = n^{-2}$, we find that, whenever $n P^X (\mathcal{V}(x) ) / \log(n) \geq 16$, it holds that
$$ \sum_{n\geq 1} \mathbb P ( |\hat g_{\mathcal V}(x) - g(x)| > v_n ) < \infty,$$
where $$v_n = \sqrt{ 8 \sigma^2  \log( n  ) / (n  b \lambda ( \mathcal V(x)) )}  + L(\mathcal V(x)) \diam (\mathcal V(x))$$
since $P^X(\mathcal V(x)) \geq b \lambda(\mathcal V(x))$ by \ref{cond:density_XCART}.

Applying the Borel-Cantelli Lemma, we get that with probability $1$, for $n$ large enough,
$$|\hat g_{\mathcal V}(x) - g(x)| \leq  \sqrt{\frac{ 8 \sigma^2  \log( n)}{n  b \lambda ( \mathcal V(x)) }} + L(\mathcal V(x)) \diam (\mathcal V(x)).$$

Let $c = 1- \log(2)$, $C = 1+\log(2)$ and $\Theta = C/c$. From Proposition \ref{cor_diamvolURT} and using that $ N = d\log(n)/(Cd+2c)$, with probability $1$, for a sufficiently large \(n\), we have
\begin{align*}
& \lambda(\mathcal{V}(x)) \geq  e^{-C N - 2\sqrt{N\log(N)}}  \geq n^{-Cd/(Cd+2c)} e^{- 2 \sqrt{\log(n) \log(\log(n)) }  },
\end{align*}
where we have used that \(N \leq \log(n)\). Using \ref{cond:density_XCART}, it follows that
$$ P^X (\mathcal{V}(x) ) \geq b \lambda(\mathcal{V}(x))  \geq b n^{-Cd/(Cd+2c)} e^{- 2 \sqrt{\log(n) \log(\log(n)) }  }.$$
As a consequence,
$$ n P^X (\mathcal{V}(x) ) \geq b n ^{ 2c /(Cd+2c) } e^{- 2 \sqrt{\log(n) \log(\log(n)) }  }.$$
Hence, we get that with probability $1$, $ n P^X (\mathcal{V}(x) ) / \log(n) \to \infty$. This ensures the previous assumption $n P^X (\mathcal{V}(x) ) / \log(n) \geq 16$, in order to apply Corollary \ref{sousgauss}.

Finally, we have the following inequality, with probability $1$, for $n $ large enough and $\delta = n^{-2}$, $$|\hat g_{\mathcal V}(x) - g(x)| \leq \sqrt{\frac{ 8 \sigma^2  \log\left( n \right)}{n  b \lambda ( \mathcal V(x)) }} + L(\mathcal V(x)) \diam (\mathcal V(x)).$$ 
This gives in virtue of Proposition \ref{cor_diamvolURT} 
$$| \hat g_{\mathcal V}(x) - g(x)| \leq \sqrt{\frac{  8 \sigma^2  \log\left( n \right)}{n b e^{-CN - 2\sqrt{N \log(N)}}}} + L(\mathcal V(x)) \sqrt d e^{-cN/d + 2\sqrt{N \log(N)/d}}.$$
Recalling that $ N = d\log(n)/(Cd+2c)$, we obtain
\begin{align*}
    &| \hat g_{\mathcal V}(x) - g(x)| \\ & \leq n^{-1/(\Theta d+2)} e^{\sqrt{N \log(N)}} \sqrt{\frac{  8 \sigma^2  \log\left( n \right)}{b}} + n^{-1/(\Theta d+2)} L(\mathcal V(x)) \sqrt d e^{2\sqrt{N \log(N)/d}}.
\end{align*}
Moreover, from $N = {d \log(n)}/{(Cd+2c)}$, we get $\sqrt{\log(n)} \le \sqrt{C+2c/d} \sqrt{N} \leq \sqrt{3} \sqrt{N}$. Applying the general inequality $\sqrt{x} \le \exp\left(\sqrt{x \log x}\right)$ (with $x = N$), we directly obtain $$\sqrt{\log(n)} \le \sqrt{3} \exp\left(\sqrt{N \log N}\right).$$
Then, since $2/\sqrt{d} \leq 2,$ we have
\[
| \hat g_{\mathcal V}(x) - g(x)| \leq \bar{C} n^{-1/(\Theta d+2)} e^{2\sqrt{N \log(N)}} ,
\]
where $\bar{C}_x = \sqrt{{  24 \sigma^2 }/{b}} + {L(\mathcal V(x)) \sqrt d} \leq \sqrt{{  24 \sigma^2 }/{b}} + {L \sqrt d} = \bar{C}$. Additionally, we have $N \log(N) \leq \log(n) \log(\log(n))$ since $N \leq \log(n)$, and hence the stated inequality. \qed

\subsection*{Proof of Proposition \ref{uniform tree not regular}}

At each stage, for each terminal leaf, draw uniformly $D_i$ in $\{1,\ldots, d\}$ as well as a uniform random variable $S_i$. Then we divide the cell according to coordinate $k = D_i$. The corresponding length $h_k(V)$ is then updated into $h_k(V) S_i$ and $h_k(V) (1-S_i)$. Note that $1-S_i$ is still uniformly distributed. As a consequence, for any given cell $V \in \mathcal{V}$ after $N$ stages, the $k$-th side length has the following representation
$$ h_k(V) = S_1^{B_1^{(k)}}\times \ldots \times S_N^{B_N^{(k)}} = \exp\left( \sum_{i=1}^N B_i^{(k)} \log(S_i) \right), $$
where $B_i^{(k)} = \ind_{D_i = k}$. It follows that
\begin{align*}
 &h_+(V ) = \exp\left( \max_{k=1,\ldots, d}  \sum_{i=1} ^ N B_i^{(k)} \log(S_i) \right), \\
 &h_-(V ) =  \exp\left(\min_{k=1,\ldots, d}  \sum_{i=1} ^ N B_i^{(k)} \log(S_i) \right), 
\end{align*}
and the expression of the ratio is
\begin{align*}
 h_+(V )/h_-(V )  &= \exp \left(   \max_{1 \leq k,j \leq d} \sum_{i=1} ^N (B_i^{(k)} - B_i^{(j)} ) E_i \right) \end{align*}
 where $E_i = - \log(S_i)$ follow an exponential distribution with parameter 1.

By denoting $V_i^{k,j} = B_i^{(k)} - B_i^{(j)}$, we get
\[V_i^{k,j} = \begin{cases} 
1 & \text{with probability } 1/d \\
0 & \text{with probability } 1 - 2/d \\
-1 & \text{with probability } 1/d 
\end{cases}.\]
Note that the variables \((V_i^{k,j})_{i=1}^N\) are mutually independent because the \((D_i)_{i=1}^N\) are independent. Furthermore, since the \(S_i\)'s are independent of the \(V_i\)'s, the \(V_i^{k,j}\)'s are independent of the \(E_i\)'s.
Let \(Z_{k,j} = \sum_{i=1}^{N} V_i^{k,j} E_i\) such that
\[
\frac{h_+(V)}{h_-(V)} = \exp \left( \max_{1 \leq k,j \leq d} Z_{k,j} \right).
\]
Note that \(Z_{k,j} = - Z_{j,k}\) and \(Z_{k,k} = 0\), which gives
\[
\max_{1 \leq k,j \leq d} Z_{k,j} = \max_{1 \leq k < j \leq d} | Z_{k,j} |
\]
and thus the formula 
\[
\frac{h_+(V )}{h_-(V )} = \exp \left( \max_{1 \leq k < j \leq d} | Z_{k,j} | \right).
\]
By using the Paley-Zygmund inequality to $Z_{k,j}^2$, we get for all $\theta \in (0,1)$,
\[
\PP\left(|Z_{k,j}| \geq \sqrt{\theta} \sqrt{\mathbb{E}(Z_{k,j}^2)} \right) \geq (1-\theta)^2 \frac{\mathbb{E}(Z_{k,j}^2)^2}{\mathbb{E}(Z_{k,j}^4)}.
\]
We therefore seek to calculate the 2nd and 4th moments of \(Z_{k,j}\). 
\\ Since \(Z_{k,j}^2 = \sum_{i \neq \ell} V_i^{k,j} V_\ell^{k,j} E_i E_\ell + \sum_{i=1}^N V_i^{k,j \, 2} E_i^2 \), \(\mathbb{E}(V_i^{k,j}) = 0\) and by independence along the subscripts, we obtain 
$$\mathbb E(Z_{k,j}^2) = \sum_{i=1}^N \mathbb{E}((V_i^{k,j})^2) \mathbb{E}(E_i^2) = N \times \frac{2}{d} \times 2 = \frac{4N}{d}.$$
Moreover, according to Lemma \ref{lemme moment} applied to \( M_i := V_i^{k,j} E_i \), we obtain 
\begin{eqnarray*}
    \mathbb{E}(Z_{k,j}^4) &=& N \mathbb{E}(M^4) + 3N (N-1) \mathbb{E}(M^2)^2 \\ &=& N \mathbb{E}((V_i^{k,j})^4) \mathbb{E}(E_i^4) + 3N (N-1) \mathbb{E}((V_i^{k,j})^2)^2 \mathbb{E}(E_i^2)^2.
\end{eqnarray*}
Indeed, it is easily checked that the variables \( (M_i)_{i=1}^N \) are centered and independent, due to the independence between the elements of the collections \( (V_i^{k,j})_{i=1}^N \) and \( (E_i)_{i=1}^N \) and the fact that the $V_i^{k,j}$ are centered. Basic calculations then give
\begin{eqnarray*}
    \mathbb{E}(Z_{k,j}^4) &=& N \mathbb{E}({V_i^{k,j \, 4}}) \mathbb{E}(E_i^4) + 3N (N-1) \mathbb{E}({V_i^{k,j \, 2}})^2 \mathbb{E}(E_i^2)^2 \\ &=& N \times \dfrac{2}{d} \times  4! + 3N(N-1) \left( \dfrac{2}{d}\right)^2 \times 2^2 \\ &=& \dfrac{48N}{d^2} (d+N-1).
\end{eqnarray*}
Consequently, we get
\[
\frac{\mathbb{E}(Z_{k,j}^2)^2}{\mathbb{E}(Z_{k,j}^4)} = \frac{16N^2}{d^2} \times \frac{d^2}{48N (d + N -1)} = \frac{N}{3(d + N -1)}
\]
and thus, for all \(\theta \in (0,1)\),
\[
\PP\left(|Z_{k,j}| \geq \sqrt{\theta} \sqrt{\frac{4N}{d}}\right) \geq (1-\theta)^2  \frac{N}{3(d + N -1)}.
\]
In particular, for \(N \geq d\), we have $3(d + N - 1) \leq 6N$, which gives
\[
\PP\left(|Z_{k,j}| \geq \sqrt{\theta} \sqrt{\frac{4N}{d}}\right) \geq (1-\theta)^2 / 6.
\]
With the choice \(\theta = 1/4\), it holds
\[
\PP\left(|Z_{k,j}| \geq \sqrt{\frac{N}{d}}\right) \geq \frac{9}{16} \times \frac{1}{6} = \frac{3}{32} \geq \frac{1}{11}.
\]
Finally, by the following lower bound,
\[
\frac{h_+(V )}{h_-(V )} = \exp \left( \max_{1\leq k < j \leq d } |Z_{k,j}| \right) \geq \exp (|Z_{1,2}|),
\]
we get, for any $N \geq d$, 
\begin{align*}
\PP\left(\frac{h_+(V )}{h_-(V)} 
\geq \exp\left(\sqrt{\frac{N}{d}}\right)\right) &\geq \PP\left(\exp(|Z_{1,2}|) \geq \exp\left(\sqrt{\frac{N}{d}}\right)\right) \\ 
&= \PP\left(|Z_{1,2}| \geq \sqrt{\frac{N}{d}}\right) \geq \frac{1}{11}.    
\end{align*} \qed

\subsection*{Proof of Proposition \ref{mondrian}}

According to \cite[Proposition 1]{minimaxmondrian}, for each coordinate $j$, the cell $\mathcal V(x)$ is given by $\prod_j [\max(x_j - L_j, 0), \min(x_j + R_j, 1)]$ where $L_j, R_j \sim \mathrm{Exp}(\Lambda)$ i.i.d., so that the side length in direction $j$ is
\[
h_j = \min(L_j, x_j) + \min(R_j, 1-x_j).
\]
Note that $h_j$ is not distributed as a $\Gamma(2,\Lambda)$ random variable because of the truncation at $x_j$ and $1-x_j$. However, letting $\xi_j := L_j + R_j \sim \Gamma(2,\Lambda)$ denote the untruncated version, we have $h_j \leq \xi_j$ almost surely, and the two tail bounds needed below still hold.

For the lower tail, let $u \leq 1$ and choose $a = \min(u,x_j) \leq x_j$ and $b = u-a \leq 1-x_j$ (possible since $u \leq 1 = x_j + (1-x_j)$). On the event $\{L_j \geq a\} \cap \{R_j \geq b\}$ we have $\min(L_j,x_j) \geq a$ and $\min(R_j,1-x_j)\geq b$, hence $h_j \geq a+b = u$. By independence of $L_j$ and $R_j$,
\[
\mathbb{P}(h_j \geq u) \geq \mathbb{P}(L_j \geq a)\,\mathbb{P}(R_j \geq b) = e^{-\Lambda a}e^{-\Lambda b} = e^{-\Lambda u}.
\]
So $\mathbb{P}(h_j \geq u) \geq e^{-\Lambda u}$ for all $u \in [0,1]$.

For the upper tail, since $h_j = \min(L_j,x_j)+\min(R_j,1-x_j) \leq L_j+R_j = \xi_j$, we have $\mathbb{P}(h_j \geq t) \leq \mathbb{P}(\xi_j \geq t)$ for all $t \geq 0$. It therefore suffices to bound the upper tail of the untruncated variable $\xi_j \sim \Gamma(2,\Lambda)$.

Since the $(L_j,R_j)_{j=1,\dots,d}$ are independent across coordinates, for $u \leq 1$,
\[
\mathbb{P}(h_-(\mathcal V(x)) \geq u) = \prod_{j=1}^d \mathbb{P}(h_j \geq u) \geq \prod_{j=1}^d e^{-\Lambda u} = e^{-\Lambda u d} = 1-\delta,
\]
for $u = -{\log(1-\delta)}/{(\Lambda d)}$ (which satisfies $u \leq 1$ as soon as $\delta \leq 1-e^{-\Lambda d}$). Then, when $\delta \leq 1-e^{-\Lambda d}$, we have with probability at least $1-\delta$,
\begin{equation}\label{h-}
h_-(\mathcal V(x)) \geq -\frac{\log(1-\delta)}{\Lambda d}.
\end{equation}
We focus now on $h_+(\mathcal V(x))$. Since $h_j \leq \xi_j$ with $\xi_j \sim \Gamma(2,\Lambda)$ i.i.d., we have for all $t \geq 0$
\[
\mathbb{P}(h_+(\mathcal V(x)) \leq t) = \prod_{j=1}^d \mathbb{P}(h_j \leq t) \geq \prod_{j=1}^d \mathbb{P}(\xi_j \leq t) = \mathbb{P}(\xi \leq t)^d,
\]
where $\xi \sim \Gamma(2,\Lambda)$. It thus suffices to bound the upper tail of $\max_j \xi_j$. Let $Y := \xi - \mathbb{E}(\xi)$. Since $\xi$ follows a Gamma distribution, $Y$ is sub-Gamma. According to \cite[p.29]{boucheron2013concentration},
\[
\forall t > 0, \quad \mathbb{P}(\Lambda Y \geq 2\sqrt{t} + t) \leq e^{-t}.
\]
Thus,
\begin{eqnarray*}
    \mathbb{P}(\Lambda h_+(\mathcal{V}(x)) \leq 2 \sqrt{t} + t + \Lambda \mathbb{E}(\xi)) &\geq& \mathbb{P}(\Lambda Y \leq 2 \sqrt{t} + t)^d = \left(1 - \mathbb{P}(\Lambda Y > 2 \sqrt{t} + t)\right)^d \\ &\geq& (1 - e^{-t})^d = 1 - \delta,
\end{eqnarray*}
with $t = -\log(1-(1-\delta)^{1/d})$. Then, with probability at least $1-\delta$,
\[
h_+(\mathcal V(x)) \leq \frac{2 + 2\sqrt{-\log(1-(1-\delta)^{1/d})} - \log(1-(1-\delta)^{1/d})}{\Lambda}.
\]
In particular, for $\delta \leq 1 - (1-e^{-1})^d$,
\begin{equation}\label{h+}
h_+(\mathcal V(x)) \leq \frac{-5 \log(1-(1-\delta)^{1/d})}{\Lambda} \leq \frac{-5\log(\delta/d)}{\Lambda},
\end{equation}
where the last inequality comes from the inequality $\delta/d \leq 1-(1-\delta)^{1/d}$.

Hence, with \eqref{h-} and \eqref{h+}, we have, for all $\delta \leq \min\left(1 - e^{-\Lambda d}, \; 1 - (1 - e^{-1})^d\right)$, the following inequality with probability at least $1-2\delta$, 
\[
\frac{h_+(\mathcal V(x))}{h_-(\mathcal V(x))} \leq \frac{5d\log(\delta/d)}{\log(1-\delta)}.
\]
\qed

\subsection*{Proof of Theorem \ref{thm:mondrian}}
Let \(x \in S_X\).  Set $\varepsilon_i := Y_i - g(X_i)$ for each $i=1,\ldots, n$. We write the bias-variance decomposition $\hat g_{\mathcal V}(x) - g(x) = W + B$, where \begin{align*}
    W := \dfrac{\sum_{i=1}^n \varepsilon_i \ind_{ \mathcal V(x) }(X_i)}{\sum_{j=1}^n \ind_{ \mathcal V(x) }(X_j)} \qquad \text{and}\qquad
       B := \dfrac{\sum_{i=1}^n \left(g(X_i) - g(x)\right) \ind_{ \mathcal V(x) }(X_i)}{\sum_{j=1}^n \ind_{ \mathcal V(x) }(X_j)}.
\end{align*}
Let us recall Inequality \eqref{h-} obtained in the previous proof, with probability at least \(1 - \delta\),  $$h_-(\mathcal V (x)) \geq - \frac{\log(1-\delta)}{\Lambda d}$$ for all $\delta \leq 1-e^{-\Lambda d}$. We thus have, whenever $n b\geq 8  c_{\delta,d} \Lambda^d$, that the inequality 
$$n P^X(\mathcal{V}(x)) \geq n b h_-^d \geq \frac{nb \log(1/(1-\delta))^d} { (\Lambda d)^d} = \frac{nb \log(1/\delta) }{\Lambda^d c_{\delta,d} } \geq 8 \log(1/\delta)$$
holds with probability at least $1-\delta$. Let $E_1$ be the event from previous equation. Let $E_2 $ be the event such that
$$\left| \dfrac{\sum_{i=1}^n \varepsilon_i \ind _{ \mathcal V(x) }(X_i)}{{\sum_{j=1}^n \ind _{ \mathcal V(x) }(X_j)}}  \right| \leq \sqrt{\dfrac{4 \sigma^2 \log(1/\delta)}{n P^X(\mathcal{V}(x))} } .$$
On $E_1\cap E_2 $, it holds
\[
\left| \dfrac{\sum_{i=1}^n \varepsilon_i \ind _{ \mathcal V(x) }(X_i)}{{\sum_{j=1}^n \ind _{ \mathcal V(x) }(X_j)}}  \right| \leq \sqrt{\dfrac{4\sigma^2 \log(1/\delta)}{n P^X(\mathcal{V}(x))}} \leq \sqrt{\dfrac{4\sigma^2 c_{\delta,d} \Lambda^d  }{n b}}\;.
\]
It remains to check that $\mathbb P ( E_1\cap E_2) \geq 1-4\delta$. Note that 
$A\cup B = A \cup  (A^c\cap B)  $
which, when applied to $ A = E_1^c$ and $B = E_2^c $, gives $\mathbb P ( E_1^c\cup E_2^c) = \mathbb P (  E_1^c) + \mathbb P ( E_1\cap E_2^c)   $. The first term $ \mathbb P (  E_1^c)$ is smaller than $\delta$, as shown before.
 According to Corollary \ref{sousgauss} (see equation \eqref{eq10}), we have $\mathbb P ( E_1\cap E_2^c |\mathcal{V}(x)) $ is smaller than $3\delta$. Integrating with respect to $\mathcal{V}(x)$, we obtain $\mathbb P ( E_1\cap E_2^c )\leq 3\delta $. 
As for the bias term, for $\delta \leq 1 - (1 - e^{-1})^d$, it was shown in \eqref{h+} (see previous proof) that, with probability at least $1-\delta$,  \[ h_+(\mathcal V (x)) \leq \dfrac{-5\log(\delta/d)}{\Lambda}\;.\]
Observing that $ 1 - (1 - e^{-1})^d \geq 1/5$, it follows that, with probability at least $1-\delta$, 
$$ |B|\leq L (\mathcal V(x)) \diam(\mathcal{V}(x)) \leq L (\mathcal V(x))  \sqrt{d} \, h_+(\mathcal{V}(x)) \leq L (\mathcal V(x))   \sqrt{d}\, 5 \, \frac{\log(d/\delta)}{\Lambda}\,.$$ 
Thus, putting together the obtained bounds on $|W|$ and $|B|$, we find, for all $\delta \leq 1-e^{-\Lambda d}$, with probability at least $1-5\delta$, 
  \[
 | \hat g_{\mathcal V}(x) - g(x)| \leq \sqrt{\dfrac{4\sigma^2 c_{\delta,d} \Lambda^d }{n b }} +  5 \sqrt{d} \, L(\mathcal{V}(x)) \frac{\log(d/\delta)}{\Lambda}\;.
\]

In addition, choosing $\Lambda \asymp n^{1/(d+2)}$ yields
  \[
 | \hat g_{\mathcal V}(x) - g(x)| \lesssim \dfrac{C}{n^{1/(d+2)}}\;,
\]
with $C = \sqrt{{4\sigma^2 c_{\delta, d}   }/ {b }} +  5 \sqrt{d} \, L(\mathcal{V}(x))\log(d/\delta) \leq \sqrt{{4\sigma^2 c_{\delta, d}   }/ {b }} +  5 \sqrt{d} \, L\log(d/\delta)$. Note that for $\delta \leq 1/5$, the condition $\delta \leq 1-e^{-\Lambda d}$ is satisfied whenever $\Lambda \geq 1/4$. This explicitly holds in our setting since $\Lambda \asymp n^{1/(d+2)} \geq 1$. \qed

\subsection*{Proof of Theorem \ref{th:main_th_proto}}

    Let \(x \in S_X\). Set $\varepsilon_i := Y_i - g(X_i)$ for each $i=1,\ldots, n$. We write the bias-variance decomposition 
$\hat g_{\mathcal V}(x) - g(x) = W + B$, where \begin{align*}
    W := \dfrac{\sum_{i=1}^n \varepsilon_i \ind_{ \mathcal V(x) }(X_i)}{\sum_{j=1}^n \ind_{ \mathcal V(x) }(X_j)} \qquad \text{and}\qquad
       B := \dfrac{\sum_{i=1}^n \left(g(X_i) - g(x)\right) \ind_{ \mathcal V(x) }(X_i)}{\sum_{j=1}^n \ind_{ \mathcal V(x) }(X_j)}.
\end{align*}
Each of the above terms will be treated in two independent propositions.

\begin{proposition}\label{prop_lemme}
    Let $x\in S_X$ and assume that \ref{cond:D}, \ref{cond:D1}, \ref{cond:reg2} and \ref{cond:reg3} are fulfilled.
Let $\delta \in (0,1/4)$. If $n \geq 1,m \geq 3$ and $$ \dfrac{n}{m} \geq \dfrac{ 8 \psi_d \log(1/\delta)}{ \delta^d},$$
with $\psi_d = (2^d 18 M d )^d (2c_d b)^{-d}$,
    then with probability at least $1- 4\delta$,
    $$\left| \dfrac{\sum_{i=1}^n \varepsilon_i \ind _{ \mathcal V(x) }(X_i)}{{\sum_{j=1}^n \ind _{ \mathcal V(x) }(X_j)}} \right| \leq\sqrt{\dfrac{ 4 \psi_d  \sigma^2   \log(1/\delta)}{\delta^d }} \sqrt{\dfrac{m}{n}}.$$   
\end{proposition}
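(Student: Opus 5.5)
The plan is to reduce the statement to inequality \eqref{eq10} from the proof of Corollary \ref{sousgauss}, applied conditionally on the prototypes. The task then becomes proving a lower bound on $P^X(\mathcal V(x))$ of order $\delta^d/(\psi_d m)$ that holds with probability at least $1-\delta$ over $Z_1,\ldots,Z_m$. The cell $\mathcal V(x)$ is a measurable function of the prototypes, which are independent of $(X_i,Y_i)_{i\le n}$. The conditioning argument of the proof of Theorem \ref{thm:mondrian} therefore applies verbatim. Let $E_1=\{nP^X(\mathcal V(x))\ge 8\log(1/\delta)\}$; we will show $\PP(E_1^c)\le\delta$. Conditionally on the $Z$'s, on $E_1$ the noise ratio is bounded by $\sqrt{4\sigma^2\log(1/\delta)/(nP^X(\mathcal V(x)))}$ with probability at least $1-3\delta$. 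Integrating over the $Z$'s gives total probability at least $1-4\delta$. Plugging in the lower bound $P^X(\mathcal V(x))\ge \delta^d/(\psi_d m)$ yields exactly the stated bound. It also gives $E_1$, by the hypothesis $n/m\ge 8\psi_d\log(1/\delta)/\delta^d$.

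The lower bound itself is geometric. Let $Z_{(1)}$ be the prototype nearest to $x$, so that $x\in\mathcal V(x)$ is the cell of $Z_{(1)}$. Let $r$ be the distance from $Z_{(1)}$ to its nearest other prototype. Then the ball $B(Z_{(1)},r/2)$ is contained in the Voronoi cell of $Z_{(1)}$, since any point within $r/2$ of $Z_{(1)}$ is strictly closer to it than to any other prototype. By \ref{cond:reg2}, for $r/2\le T_0$,
\[
P^X(\mathcal V(x))\ge b\,\lambda(S_X\cap B(Z_{(1)},r/2))\ge b c_d V_d (r/2)^d .
\]
It therefore suffices to show $\PP\bigl(r\le \delta\,\kappa_d\, m^{-1/d}\bigr)\le\delta$ for a suitable constant $\kappa_d$. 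Matching $b c_d V_d(\delta\kappa_d/2)^d m^{-1}$ with $\delta^d/(\psi_d m)$ then determines $\psi_d$.

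To control $r$, I would use a union bound over which index $j$ realizes the nearest prototype to $x$, together with the event that some other $Z_k$ lies within distance $\rho$ of $Z_j$. Bounding crudely, $\PP(r\le\rho)\le \sum_{j\neq k}\PP(\|Z_j-Z_k\|\le\rho)\le m^2 M V_d\rho^d$, but this forces $\rho\sim m^{-2/d}$, which is too small. The refinement is to note that $Z_{(1)}$ is itself a random point. Conditionally on $Z_{(1)}=z$, the remaining $m-1$ prototypes are i.i.d. with density at most $M$ restricted outside $B(x,\|z-x\|)$, renormalized. The renormalizing mass is $1-P^X(B(x,\|x-z\|))$, which is at least $1/2$ except on an event of probability $\le 2^{-(m-1)}$. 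Hence
\[
\PP(r\le\rho\mid Z_{(1)})\le (m-1)\cdot 2M V_d\rho^d ,
\]
and taking $\rho^d=\delta/(2MV_dm)$ gives probability at most $\delta$ up to the exponentially small term. That exponential term is absorbed because $m\ge3$ and the constants are generous.

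I expect the main obstacle to be precisely this conditioning step: making rigorous that, given the nearest neighbor of $x$, the other prototypes are conditionally i.i.d. on the complement of a ball, with density bounded by a constant multiple of $M$. The clean way is the standard order-statistics representation for the distances $\|Z_j-x\|$, which is continuous under \ref{cond:reg2}. If that gets delicate, I would instead condition on the index $j$ and the value $Z_j$, and bound $\PP(\exists k\ne j:\|Z_k-Z_j\|\le\rho,\ \|Z_k-x\|\ge\|Z_j-x\|)$ directly. The discrepancy with the $\delta^d$ exponent and the $2^d 18Md$ factors in $\psi_d$ suggests the authors accepted a lossier route, for example Markov on the $d$-th power of order statistics. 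Any bound of the form $P^X(\mathcal V(x))\gtrsim \delta^{d}/m$ suffices, so I would not fight for the sharper $\delta/m$.
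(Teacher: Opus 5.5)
Your top-level structure is the same as the paper's: condition on the prototypes, invoke \eqref{eq10} on $E_1$, and use $B(Z_{(1)},r/2)\subset\mathcal V(x)$ together with \ref{cond:reg2}. The gap is in the step controlling $r$. Write $W=\|Z-x\|$, with c.d.f.\ $F_W$. Your truncation on $\{1-F_W(W_{(1)})\ge 1/2\}$ gives $\PP(r\le\rho)\le \frac{m-1}{m}\delta+2^{-m}$, and the term $2^{-m}$ cannot be absorbed. It does not depend on $\rho$, so the slack in $\psi_d$ does not help: that slack is in the volume bound, not in the probability. Nor do the hypotheses tie $m$ to $\delta$. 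Taking $m=3$, $\delta$ arbitrarily small and $n$ huge is allowed, and then $2^{-m}\le\delta/m$ fails. A $\delta$-dependent truncation level $s$ with $s^m=\delta/2$ fixes the probability, but it loses a factor $(\delta/2)^{1/m}$ in the volume. For $d=1$, $m=3$ and small $\delta$, the result then no longer reaches $\delta/(\psi_1 m)$. Your fallback has the same defect if you bound $\PP(\exists k\neq j:\|Z_k-Z_j\|\le\rho,\ \|Z_k-x\|\ge\|Z_j-x\|)$ for each $j$ without keeping the event that $j$ is the nearest index: summing over $j$ returns the crude $m(m-1)MV_d\rho^d$.

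The repair is to integrate the conditional bound instead of truncating it. Conditionally on $Z_{(1)}$, $\PP(r\le\rho\mid Z_{(1)})\le (m-1)MV_d\rho^d/(1-F_W(W_{(1)}))$. Because $P^X$ has a density, $F_W$ is continuous. Hence $1-F_W(W_{(1)})$ is distributed as the maximum of $m$ i.i.d.\ uniforms, so $\mathbb E[(1-F_W(W_{(1)}))^{-1}]=m/(m-1)$ and $\PP(r\le\rho)\le mMV_d\rho^d$. Equivalently, by symmetry, $\PP(r\le\rho)\le m(m-1)\,\mathbb E\bigl[\ind\{\|Z_2-Z_1\|\le\rho\}(1-F_W(\|Z_1-x\|))^{m-2}\bigr]\le m(m-1)MV_d\rho^d\int_0^1(1-u)^{m-2}\,du$.

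Take $\rho^d=\delta/(mMV_d)$. This gives $\rho<\rho_0=T_0$ automatically, since $MV_d\rho_0^d\ge 1$. You then get $P^X(\mathcal V(x))\ge \frac{bc_d}{2^dM}\cdot\frac{\delta}{m}$ with probability at least $1-\delta$. Note that $\psi_d$ is fixed by the statement, so you must check rather than ``determine'' it. The check holds: $\frac{bc_d}{2^dM}\delta\ge\delta^d/\psi_d$, because $M\ge b$, $c_d\le 1$ and $\delta<1$. The rest of your argument then goes through.

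Compared with the paper, the route differs only in this step. The paper lower-bounds $r$ by the scalar spacing $W_{(2)}-W_{(1)}$ via the reverse triangle inequality (Lemma \ref{PV}). It then controls that spacing with a one-dimensional order-statistics lemma (Lemma \ref{kappa}), which requires $f_W\le c_0F_W^{1-1/d}$ from Lemmas \ref{Z vers X} and \ref{condition F vers kappa}. The spacing has a tail linear in $t$, so raising it to the $d$-th power costs $\delta^d$. Your repaired argument keeps the $d$-dimensional geometry and uses only $f_X\le M$ at this step. It yields $\delta/m$ instead of $\delta^d/m$, which would improve the $\delta^{-d/2}$ factor in Theorem \ref{th:main_th_proto} to $\delta^{-1/2}$.
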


\begin{proof}
The proof is in two steps. As a first step we show that with probability at least $1-\delta$,
\begin{align*}
n  P^X (\mathcal{V}(x) ) \geq \dfrac{n}{m} \psi_d^{-1} \, \delta^d   ,
\end{align*}
with \( \psi_d \) defined in the statement.
As a second step, we rely on Corollary \ref{sousgauss} to obtain the stated upper bound. 

\noindent \textit{Step 1:} Invoking \ref{cond:reg2}, we apply Lemma \ref{Z vers X}, (a) and (b), to $Z \sim X$ and $W := \|Z - x\|$
 to obtain that $f_W(\rho) \leq c_2 \rho^{d-1} $ whenever $\rho\leq \rho_0$ and $f_W (\rho ) \leq c_2  2^{d-1} \rho_0^ {d-1} $ for all $\rho \geq 0$, with $c_2 = Md V_d$. Similarly, we apply Lemma \ref{Z vers X} (c) in light of assumption \ref{cond:reg2} to get that $ F_W(\rho) \geq c_1 \rho ^d$, for all $\rho \leq \rho_0 = T_0$, with $c_1 = c_d b V_d$. This allows to  apply Lemma \ref{condition F vers kappa} with \( c_1 = c_d b V_d\), \(c_2 = Md V_d \), \(U = c_2  2^{d-1}\rho_0^{d-1} \), to obtain the inequality, for all $t\geq 0$,
\[
f_W(t) \leq c_0 F_W(t)^\kappa
\]  
where \( \kappa = 1 - 1/d \) and $ c_0 =  c_1^{-\kappa} \max(c_2,  c_2 {(2\rho_0/T_0)^{d-1}}) =  2^{d-1} c_1^{-\kappa} c_2 $. For all $i=1,\ldots, m$, let $ W_i =\| Z_i - x\| $ and $W_{(i)} $ the ordered statistics $ W_{(1)} \leq W_{(2)} \leq  \ldots \leq   W_{(m)}.$ 
We are now in position to apply Lemma \ref{kappa} with \( \kappa = 1 - 1/d \) and  $ c_0 =  2^{d-1} c_1^{-\kappa} c_2 $ as defined above, to obtain that, with probability at least \(1 - \delta\), 
$$W_{(2)} -W_{(1)}  \geq    C^{-1} \delta  m^{ -1/d } \geq \overline{C}^{-1} \delta  m^{ -1/d } ,$$ where $C = c_0 \Gamma(2-1/d) 3^{2-1/d}$ and   $\overline{C}= 9 c_0 = 9 \times  2^{d-1} Md V_d^{1/d} {(c_d b)^{-1+1/d}}  $,  satisfy $\overline{C} \geq C$ since \( \Gamma(2-1/d)  \leq 1 \) and $3^{2-1/d}\leq 9 $. Moreover, since \( f_X \) has compact support included in $B (0, \rho_0)$, we have for all $(i,j) \in \{1, \dots, m\}^2$ 
\[  W_{(2)} -  W_{(1)}  \leq \| Z_i - Z_j \| \leq \|Z_i\| + \| Z_j\| \leq 2 \rho_0 = 2T_0.\]  
Recall that we have shown that $ c_1 \rho ^d \leq  F_W(\rho) = P^Z(B(x,\rho)) = P^X(B(x,\rho))$, for all $\rho \leq \rho_0 = T_0$. Thus, we can apply Lemma \ref{PV} with $c_3 = c_1$, \( T_1 = T_0 \) and with the distribution $P = P^X$, which yields, with probability at least $1-\delta$,
\[
 P^X(\mathcal{V}(x) )\geq \frac{c_1}{2^d}   (W_{(2)} -  W_{(1)}) ^d \geq \frac{c_1}{2^d \overline{C}^{d}}  \delta^d  m^{ -1 }= \psi_d^{-1} \delta^d  m^{ -1 },
\]  
where 
\( \psi_d =  c_1^{-1} (2\overline{C})^{d} \). 
Note in particular that, as soon as \( n\geq 8\psi_dm\log(1/\delta)/ \delta^d \), we get the inequality  
\[
n P^X(\mathcal{V}(x) ) \geq \dfrac{n}{m} \psi_d^{-1} \, \delta^d \geq 8\log(1/\delta),
\]  
that is valid with probability at least $1-\delta$. 

\noindent \textit{Step 2:} Let $E_1$ be the event from previous equation.  Let $E_2 $ be the event such that
$$\left| \dfrac{\sum_{i=1}^n \varepsilon_i \ind _{ \mathcal V(x) }(X_i)}{{\sum_{j=1}^n \ind _{ \mathcal V(x) }(X_j)}}  \right| \leq \sqrt{\dfrac{4 \sigma^2 \log(1/\delta)}{n P^X(\mathcal{V}(x))} } .$$
It is easy to see that $E_1$ and $E_2 $ imply that 
\[
\left| \dfrac{\sum_{i=1}^n \varepsilon_i \ind _{ \mathcal V(x) }(X_i)}{{\sum_{j=1}^n \ind _{ \mathcal V(x) }(X_j)}}  \right| \leq \sqrt{\dfrac{4\sigma^2 \log(1/\delta)}{n P^X(\mathcal{V}(x))}} \leq \sqrt{\dfrac{4\psi_d \sigma^2 \log(1/\delta)}{\delta^d}} \sqrt{\dfrac{m}{n}}.
\]
It remains to check that $\mathbb P ( E_1\cap E_2) \geq 1-4\delta$. Note that 
$A\cup B = A \cup  (A^c\cap B)  $
which, when applied to $ A = E_1^c$ and $B = E_2^c $, gives $\mathbb P ( E_1^c\cup E_2^c) = \mathbb P (  E_1^c) + \mathbb P ( E_1\cap E_2^c)   $. The first term $ \mathbb P (  E_1^c)$ is smaller than $\delta$ as shown before in Step 1.
 According to assumption \ref{cond:reg3} and Corollary \ref{sousgauss} (equation \eqref{eq10}), we have $\mathbb P ( E_1\cap E_2^c | Z_{1}, \dots, Z_m) $ is smaller than $3\delta$. Integrating with respect to $Z_{1} , \ldots, Z_{m}$, we obtain $\mathbb P ( E_1\cap E_2^c )\leq 3\delta $. 
\end{proof}

\begin{proposition}
Suppose that \ref{cond:D}, \ref{cond:D1}, \ref{cond:reg2} and \ref{cond:reg4} hold true. Then, for all $m\geq 1$, $\delta \in (0,1)$ such that $ 32 d \log(12m / \delta )  \leq T_0^d  m b c_d V_d $, it holds, with probability at least $1-\delta$,
$$ \left|  \dfrac{\sum_{i=1}^n \left(g(X_i) - g(x)\right) \ind_{ \mathcal V(x) }(X_i)}{\sum_{j=1}^n \ind_{ \mathcal V(x) }(X_j)} \right| \leq  2 L(\mathcal{V}(x))  \left(\frac{ 32d \log(12m/\delta) }{ mb c_d V_d }  \right)^{1/d} .$$
\end{proposition}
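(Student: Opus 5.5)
The bias term is bounded deterministically by $\sup_{y\in\mathcal V(x)}|g(y)-g(x)|\le L(\mathcal V(x))\sup_{y\in \mathcal V(x)}\|y-x\|$, exactly as in the proof of Theorem \ref{sousgauss3}. So it suffices to show that, with probability at least $1-\delta$ over the prototypes $Z_1,\dots,Z_m$, every point of the Voronoi cell $\mathcal V(x)$ lies within distance $2r$ of $x$, where
\[
r := \left(\frac{32d\log(12m/\delta)}{mbc_dV_d}\right)^{1/d}.
\]
The hypothesis $32d\log(12m/\delta)\le T_0^d m b c_d V_d$ is precisely the condition $r\le T_0$, which is what allows the mass assumption \ref{cond:reg2} to be used at scale $r$.

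The geometric reduction is as follows. Let $y\in\mathcal V(x)$ and let $Z_k$ be the prototype of the cell, so that $\|y-Z_k\|\le \|y-Z_j\|$ for all $j$. Then $\|y-x\|\le \|y-Z_k\|+\|Z_k-x\|$. If every point $u\in S_X$ has a prototype within distance $r$, then $\|y-Z_k\|\le r$. Applying the same fact at $u=x$ gives $\|x-Z_k\|\le r$: indeed $x$ lies in the same cell, so $Z_k$ is also the nearest prototype to $x$. Hence $\mathrm{diam}(\mathcal V(x))\le 2\sup_{u\in\mathcal V(x)}\|u-Z_k\|\le 2r$. It is therefore enough to prove the covering event $\sup_{u\in S_X}\min_j\|u-Z_j\|\le r$ with probability $1-\delta$. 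Restricting to $u\in\mathcal V(x)$ would suffice, but a global statement is simpler.

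To prove the covering statement I would use a net argument. Take a maximal $(r/2)$-separated subset $\{u_1,\dots,u_K\}$ of $S_X$. The balls $B(u_l,r/4)$ are disjoint, and by \ref{cond:reg2} each has Lebesgue measure of its intersection with $S_X$ at least $c_dV_d(r/4)^d$. Each also has $P^X$-mass at least $b\,c_dV_d(r/4)^d$. Since the total mass is $1$, this gives $K\le 4^d/(bc_dV_d r^d)$.

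Every $u\in S_X$ lies within $r/2$ of some $u_l$, so it is enough that each ball $B(u_l,r/2)$ contains a prototype. The probability that a given ball is empty is
\[
(1-P^X(B(u_l,r/2)))^m\le \exp\!\bigl(-m\,b\,c_dV_d 2^{-d}r^d\bigr).
\]
A union bound then gives failure probability at most $K\exp(-mbc_dV_d2^{-d}r^d)$. With the stated $r$, we have $mbc_dV_dr^d=32d\log(12m/\delta)$. The exponent is therefore $2^{-d}\cdot 32d\log(12m/\delta)$, which is weak for large $d$, so I would adjust the net scales.

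The better choice is a net at a scale where the ball mass is about $r^d$ up to a factor of $2$: use an $(r/2)$-net with balls of radius $r/2$, and accept that $K$ grows like $m/\log(12m/\delta)$. Alternatively, the bound can be phrased with the stated constants by using a finer covering of $S_X\subset B(0,\rho_0)$ of cardinality about $(c\rho_0/r)^d\lesssim m^{d}$, which is absorbed since $\log K\lesssim d\log(12m/\delta)$. Indeed $r^d\gtrsim 1/m$ gives $K\le (12m/\delta)^{d}$ after crude bounding, and the factor $32d$ in the exponent is designed to dominate $d\log(12m/\delta)+\log(1/\delta)$.

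The main obstacle is this constant bookkeeping: choosing the net radius and the ball radius, say $r/4$ and $3r/4$ or $r/2$ and $r/2$, so that the ball mass lower bound $bc_dV_d(\cdot)^d$ and the cardinality bound combine to give exactly $\delta$ with the factor $32d$ and the argument $12m/\delta$. I expect the $12$ and $32$ come from this step. I would first try a net of radius $r/2$ with balls of radius $r/2$, bounding $K$ via disjoint balls of radius $r/4$. If the constants do not close, I would fall back on the $m^{-1}$ lower bound on $r^d$ to bound $\log K\le d\log(12m/\delta)$.
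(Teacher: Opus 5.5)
Your geometric reduction is the paper's own: the bias is at most $L(\mathcal V(x))\diam(\mathcal V(x))$, and $\diam(\mathcal V(x))\le 2\sup_{u}\min_j\|u-Z_j\|$. From there the paper does not use a net. It bounds the $1$-NN radius by the $k$-NN radius $\hat\tau_k(u)$ with $k=16d\log(12m/\delta)$, then applies the uniform $k$-NN radius bound of Lemma 3 in \cite{portier2021nearest}, which is valid when $2k\le T_0^dmbc_dV_d$. That is where the factor $32d\log(12m/\delta)=2k$ and the $12m$ come from.

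Your covering route is a legitimate, more elementary alternative. However, the probabilistic step is left open, and the concrete choice you propose fails. With an $(r/2)$-net and balls of radius $r/2$, the union bound is $\frac{4^dm}{32d\log(12m/\delta)}(12m/\delta)^{-32d/2^d}$. For $d=8$ this equals $\frac{64\delta}{3\log(12m/\delta)}$, which is not $\le\delta$ in general, and for $d\ge 9$ it grows with $m$. Your fallback does not repair this. For any net scale $\varepsilon r$ the balls have radius $(1-\varepsilon)r$, so the exponent is $32d(1-\varepsilon)^d\log(12m/\delta)$, not $32d\log(12m/\delta)$. Keeping it of order $d\log(12m/\delta)$ forces $\varepsilon\asymp 1/d$, and then the net has cardinality of order $(Cd)^dm$. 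Nothing in your proposal absorbs this $d^d$ factor.

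To close the argument, take $s=r/(2d)$ and a maximal $s$-separated family $u_1,\dots,u_K$ in $S_X$. Require each $B(u_l,r-s)$ to contain a prototype; all radii are $\le r\le T_0$, so \ref{cond:reg2} applies.
\begin{itemize}
\item Bernoulli's inequality gives $(1-\tfrac{1}{2d})^d\ge\tfrac12$, so each ball is empty with probability at most $(12m/\delta)^{-16d}$.
\item Disjointness of the balls $B(u_l,s/2)$ gives $K\le (4d)^dm/(32d\log(12m/\delta))$.
\item The missing ingredient is that the hypothesis bounds $d$ in terms of $m$. Since $bc_dV_dT_0^d\le P^X(B(x,T_0))\le 1$, the assumption implies $32d\log(12m/\delta)\le m$. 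Hence $4d\le m\le 12m/\delta$.
\item The failure probability is therefore at most $m(12m/\delta)^{-15d}\le\delta/12$.
\end{itemize}
With this fix your proof is complete and self-contained: it needs only the $1$-NN covering radius, with no detour through $\hat\tau_k$ or VC-type uniform deviations. The paper's proof is shorter but rests on the external lemma.
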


\begin{proof}
Using triangle inequality we obtain
    \begin{align*}
     &\left|  \dfrac{\sum_{i=1}^n \left(g(X_i) - g(x)\right) \ind_{ \mathcal V(x) }(X_i)}{\sum_{j=1}^n \ind_{ \mathcal V(x) }(X_j)} \right|\\ 
     &\leq \dfrac{\sum_{i=1}^n \left|g(X_i) - g(x)\right| \ind_{ \mathcal V(x) }(X_i)}{\sum_{j=1}^n \ind_{ \mathcal V(x) }(X_j)} \\ 
     &\leq \dfrac{\sum_{i=1}^n \sup_{y \in \mathcal V(x)} |g(y) - g(x)| \ind_{ \mathcal V(x) }(X_i)}{\sum_{j=1}^n \ind_{ \mathcal V(x) }(X_j)} = \sup_{y \in \mathcal V(x)} |g(y) - g(x)|.
\end{align*}
Moreover, using the Lipschitz assumption, it follows that
$$|g(y) - g(x)| \leq  L(\mathcal V(x)) \|x-y\|_2  \leq L(\mathcal V(x)) \diam  (\mathcal V(x) ).$$
Thus, we obtain
\begin{equation}\label{d1}
     \left|  \dfrac{\sum_{i=1}^n \left(g(X_i) - g(x)\right) \ind_{ \mathcal V(x) }(X_i)}{\sum_{j=1}^n \ind_{ \mathcal V(x) }(X_j)} \right| \leq L(\mathcal{V}(x)) \diam (\mathcal V (x) ).
\end{equation}
Suppose that $x$ and $y$ belong to the Voronoi cell of $Z_i$. That is $ i = \argmin _{j=1,\ldots, m} \| x- Z_j\| = \argmin_{j=1,\ldots, m} \| y- Z_j\| $. Hence 
$$\|x- y\| \leq  \| x- Z_i \| + \| y-Z_i\|=
\min_{j=1,\ldots, m}  \| x- Z_j \| + \min_{j=1,\ldots, m} \| y-Z_j\|.$$
Therefore 
$$  \diam (\mathcal V (x) ) \leq 2 \sup_{u\in S_Z} \hat \tau_1 (u) \leq 2 \sup_{u\in S_Z} \hat \tau_{k} (u) $$
with $k=16d \log(12m/\delta)$ and $\hat \tau_{k} (u) $ is $k$-NN radius 
\begin{align*}
\hat \tau_{k}(u) =  \inf   \{ \tau\geq 0 \, :\,  \sum_{i=1}^m \ind _{ B(u,\tau) }(Z_i) \geq k \} .
\end{align*}
Using Lemma 3 in \cite{portier2021nearest} whenever $2k \leq  T_0^d  m b c_d V_d$, we obtain that, with probability at least $1-\delta$,
\begin{equation}\label{d2}
   \diam (\mathcal V (x) ) \leq  2 \left(\frac{ 32d \log(12m/\delta) }{ mb c_d V_d }  \right)^{1/d}. 
\end{equation}
Finally, the combination of equations \eqref{d1} and \eqref{d2} yields the desired result.   
\end{proof}

Getting back to the proof of Theorem \ref{th:main_th_proto}, the upper bounds on \( W \) and \( B \) from the previous propositions yield the stated result. \qed

\subsection*{Proof of Corollary \ref{cor:main_th_proto}}

It suffices to write the inequality \(\log(12m/\delta) \leq \log(12n/\delta)\) and then observing that the identity \(\sqrt{\log(1/\delta)/\delta^d} \sqrt{m/n} = (\log(n/\delta)/m)^{1/d}\) gives the correct order for $m$. Finally, using this choice of $m$ into the bound yields the stated result. \qed

\subsection*{Proof of Theorem \ref{th:main_th_OptiNet}}

We start by establishing two facts that are related to the $\eta$-net construction. They will be useful to deal with the bias term (Fact 1) and the variance term (Fact 2). For \( A \subset \mathbb{R}^d \) and \( \eta > 0 \), an \( \eta \)-net of \( A \) is any subset \( B \subset A \) such that the distance between any two distinct points in \( B \) is strictly greater than \( \eta \), i.e., \( \forall x, y \in B, x \neq y \Rightarrow \|x - y\| > \eta \),  and such that \( B \) is maximal with respect to this property (i.e., no point from \( A \) can be added to \( B \) without violating the previous condition). 
Let $Z(\eta) = \{ Z_i(\eta), i \in [\![1,m(\eta)]\!] \}$ be an $\eta$-net of the set $(Z_i)_{i=1,\dots,m}$. 

\noindent \textbf{Fact 1.} 
We have $\forall i \in [\![1,m]\!], \, d(Z_i, Z(\eta)) \leq \eta$. Indeed, if there exists $i \in [\![1,m]\! ]$ such that $Z_i \in Z(\eta)$ then $d(Z_i, Z(\eta)) = 0 \leq \eta$. Otherwise if $Z_i \notin Z(\eta)$ and $d(Z_i, Z(\eta)) > \eta$ this denies the fact that $Z(\eta)$ is maximal and therefore contradicts the $\eta$-net construction.

\noindent \textbf{Fact 2.} We have that $ B ( Z_k (\eta), \eta /2)  \subset V_k(\eta)$, where $V_k(\eta)$ is the Voronoi cell containing $Z_k(\eta)$ and relative to the $\eta$-net $Z(\eta)$. The result indeed simply follows from noting that
 $ B ( Z_k (\eta) , \Delta_k (\eta) /2) \subset  V_k (\eta)      $ where $\Delta _k(\eta)  = \min _{i\neq k} \|Z_i(\eta) - Z_k(\eta) \| $ is larger than $\eta $ by construction. 

Let \(x \in S_X\).  Set $\varepsilon_i := Y_i - g(X_i)$ for each $i=1,\ldots, n$. We write the bias-variance decomposition 
$\hat g_{\mathcal V_\eta}(x) - g(x) = W + B$, where \begin{align*}
    W := \dfrac{\sum_{i=1}^n \varepsilon_i \ind_{ \mathcal V_\eta(x) }(X_i)}{\sum_{j=1}^n \ind_{ \mathcal V_\eta(x) }(X_j)} \qquad \text{and}\qquad
       B := \dfrac{\sum_{i=1}^n \left(g(X_i) - g(x)\right) \ind_{ \mathcal V_\eta(x) }(X_i)}{\sum_{j=1}^n \ind_{ \mathcal V_\eta(x) }(X_j)}.
\end{align*}
   We start by considering the bias term $B$. Using triangle inequality we obtain
    \begin{align*}
     \left| B\right|
     &\leq \dfrac{\sum_{i=1}^n \left|g(X_i) - g(x)\right| \ind_{ \mathcal V_\eta(x) }(X_i)}{\sum_{j=1}^n \ind_{ \mathcal V_\eta(x) }(X_j)} \\ 
     &\leq \dfrac{\sum_{i=1}^n \sup_{y \in \mathcal V_\eta(x)} |g(y) - g(x)| \ind_{ \mathcal V_\eta(x) }(X_i)}{\sum_{j=1}^n \ind_{ \mathcal V_\eta(x) }(X_j)} = \sup_{y \in \mathcal V_\eta(x)} |g(y) - g(x)|.
\end{align*}
Moreover, using the Lipschitz assumption, it follows that
$$|g(y) - g(x)| \leq  L(\mathcal V_\eta(x)) \|x-y\|_2  \leq L(\mathcal V_\eta(x)) \diam  (\mathcal V_\eta(x) ).$$
Thus, we obtain
\begin{equation*}
    |B| \leq L(\mathcal{V}_\eta(x)) \diam (\mathcal V _\eta (x) ),
\end{equation*}
we can now provide an upper bound on the diameter of $\mathcal V_\eta (x) $. Let $Z_\eta (x) $ (resp. $Z(x)$) denote the closest point to $x$ among $Z(\eta) $ (resp. $Z_1,\ldots, Z_m$). We have
$$(x,z) \in \mathcal V_\eta (x) ^2 \implies Z_\eta(x) = Z_\eta(z) $$ then $$d(x,z) \leq d(x, Z_\eta(x) ) + d(Z_\eta(x) ,z) = d(x, Z_\eta(x) ) + d(Z_\eta(z) ,z). $$
For the first term we write $d(x, Z_\eta(x) ) = d(x, Z(\eta) )$ and by triangle inequality
$$d(x, Z(\eta) ) \leq d(x, Z(x)) + d(Z(x) , Z(\eta) ) \leq \sup_{x \in S_X  } d(x, Z(x) ) + \eta $$
using \textbf{Fact 1}. It follows that the diameter is such that 
$$ \diam (\mathcal V _\eta (x) ) \leq 2\sup_{x \in S_X  } d(x, Z(x) ) + 2\eta. $$
The first above term is bounded by $2(32d\log(12m/\delta) / [mb c_d V_d])^{1/d}$ with probability at least $1-\delta$, from Lemma 3 in \cite{portier2021nearest} whenever $32d\log(12m/\delta) \leq  T_0^d  m b c_d V_d$. As a consequence, we have shown that, with probability at least $1-\delta$,
$$ |B|\leq 2L(\mathcal V_\eta(x))  \left( \left( \frac{32d\log(12m/\delta)}{  mb c_d V_d}\right)^{1/d}+ \eta \right).$$

Let us now deal with the variance term $W$. As soon as $nP^X(\mathcal{V}_\eta(x)) \geq 8 \log(1/\delta)$ we can apply Corollary \ref{sousgauss} (equation \eqref{eq10}) to obtain the following inequality which holds with probability at least $1 - 3\delta$
\begin{equation}\label{var_eq}
    |W| = \left| \dfrac{\sum_{i=1}^n \varepsilon_i \ind _{\mathcal{V}_\eta(x) }(X_i)}{{\sum_{j=1}^n \ind _{\mathcal{V}_\eta(x)}(X_j)}} \right|     \leq \sqrt{\dfrac{4\sigma^2 \log(1/\delta)}{n P^X(\mathcal{V}_\eta(x))} }.
\end{equation} 
   We can therefore conclude by obtaining a lower bound on $ P^X (\mathcal V_\eta (x) ) $. Let $V_k(\eta)$ denote the Voronoi collection of $Z(\eta)$.  We have  using \textbf{Fact 2},
\begin{align*}
    P^X (\mathcal V_\eta (x) )  
     = \sum _{k=1} ^ {m(\eta)} \ind_{ V_k(\eta) }(x) \, P^X( \mathcal{V}_\eta(x) )  &= \sum _{k=1} ^ {m(\eta)} \ind_{V_k(\eta) }(x) \, P^X(  V_k (\eta) ) \\ &\geq \sum _{k=1} ^ {m(\eta)} \ind_{V_k(\eta) }(x) \, P^X( B ( Z_k(\eta) , \eta /2) ).
\end{align*}
 Moreover if $\eta\leq 2T_0$, using \ref{cond:reg2} to obtain that for all $z \in S_Z$, $$P^X(B ( z , \eta /2)) \geq b \lambda(S_Z \cap B ( z , \eta /2)) \geq b c_d  \lambda(B ( z , \eta /2)) = b c_d V_d \eta^d/2^d,$$ and therefore $P^X (\mathcal{V}_\eta(x) )  \geq b {c_d} V_d \eta^d / {2^d}.$ We then find that, using \eqref{var_eq}, the following inequality holds with probability at least $1-3\delta$, 
 $$|W| \leq \sqrt{\dfrac{2^{d+2} \sigma^2 \log(1/\delta)}{n b c_d V_d \eta^d} }.$$
 Combining the obtained bound on $|B|$ and $|W|$ yields the stated result. \qed

\section{Auxiliary results and technical lemmas}

Let us start with a result establishing some conditional independence property under \ref{cond:D}.

\begin{lemma}\label{lemmaA}
    Assume \ref{cond:D}. Then $(Y_i) _{i = 1,\ldots, n} $ is an independent collection of random variables, conditionally on $(X_i)_{i = 1,\ldots, n}$. 
\end{lemma}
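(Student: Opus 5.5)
To prove Lemma~\ref{lemmaA}, I will show that the conditional law of $(Y_1,\ldots,Y_n)$ given $(X_1,\ldots,X_n)$ factorizes into the product of the individual conditional laws of $Y_i$ given $X_i$. Concretely, I will show that for all bounded measurable $\varphi_1,\ldots,\varphi_n:\mathbb R\to\mathbb R$,
\[
\mathbb E\Big[\prod_{i=1}^n \varphi_i(Y_i)\,\Big|\, X_1,\ldots,X_n\Big] = \prod_{i=1}^n \mathbb E[\varphi_i(Y_i)\mid X_i] \quad \text{a.s.}
\]
Taking $\varphi_i$ to be indicators of Borel sets then gives conditional independence of the $Y_i$ given $X_{1:n}$. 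It also shows that the conditional law of $Y_i$ given $X_{1:n}$ depends only on $X_i$, which is exactly what is used in the proof of Theorem~\ref{sousgauss3} to transfer \ref{cond:epsilon} to $\varepsilon_i$ conditionally on $X_{1:n}$.

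The argument is a direct check of the defining property of conditional expectation. For each $i$, let $h_i$ be a measurable function with $h_i(X_i)=\mathbb E[\varphi_i(Y_i)\mid X_i]$; by \ref{cond:D} the same function works for every $i$. The right-hand side $\prod_i h_i(X_i)$ is $\sigma(X_{1:n})$-measurable, so it suffices to test against generators of $\sigma(X_{1:n})$.

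1. Take the $\pi$-system of product events $\{X_1\in A_1,\ldots,X_n\in A_n\}$. For such an event,
\[
\mathbb E\Big[\prod_i \varphi_i(Y_i)\ind_{A_i}(X_i)\Big]=\prod_i \mathbb E[\varphi_i(Y_i)\ind_{A_i}(X_i)],
\]
because the pairs $(X_i,Y_i)$ are independent.
2. Each factor equals $\mathbb E[h_i(X_i)\ind_{A_i}(X_i)]$ by definition of $h_i$.
3. Again by independence of the pairs, the product of these factors recombines into $\mathbb E[\prod_i h_i(X_i)\ind_{A_i}(X_i)]$.

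This gives the identity on the $\pi$-system. Both sides of the defining identity are finite measures in the test set (signed, but bounded), so a $\pi$–$\lambda$ (Dynkin) argument extends the equality to all of $\sigma(X_{1:n})$.

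The only delicate step is this monotone class extension from rectangles to the full $\sigma$-algebra, together with keeping track of the almost-sure qualifiers. Neither causes real difficulty since everything is bounded. If regular conditional distributions are preferred, the same computation shows that the kernel $\bigotimes_i P_{Y|X=x_i}$ is a version of the law of $Y_{1:n}$ given $X_{1:n}=(x_1,\ldots,x_n)$, which is precisely the claim.
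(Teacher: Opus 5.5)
Your proposal is correct and takes essentially the same route as the paper. Both verify the defining property of $\EE[\prod_i \psi_i(Y_i)\mid X_1,\ldots,X_n]$ by testing against products of functions of the $X_i$'s (you use indicators $\ind_{A_i}$, the paper uses bounded $\phi_i$), factoring and recombining via independence of the pairs; your explicit $\pi$--$\lambda$ extension to all of $\sigma(X_1,\ldots,X_n)$ is a step the paper leaves implicit under ``by definition of conditional expectation,'' so your write-up is, if anything, slightly more complete.
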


\begin{proof}
    Let $(\phi_i)_{i=1, \dots,n}$ and $(\psi_i)_{i=1, \dots,n}$ be bounded and measurable functions. Then
    \begin{eqnarray*}
    \EE\left( \prod_{i=1}^n \EE\left(\psi_i(Y_i) \middle| X_i \right) \prod_{i=1}^n \phi_i(X_i)\right) &=& \EE\left(\prod_{i=1}^n \EE\left( \psi_i(Y_i) \phi_i (X_i)  | X_i\right)  \right) \\ &=& \prod_{i=1}^n \EE\left( \EE\left( \psi_i(Y_i) \phi_i (X_i) \middle| X_i \right) \right)
    \end{eqnarray*}
because $\EE\left( \psi_i(Y_i) \phi_i (X_i) \middle| X_i \right)$ is $X_i$-measurable and $(X_i)_{i=1,\dots,n}$ are independent. Hence,
    \begin{eqnarray*}
    \EE\left( \prod_{i=1}^n \EE\left(\psi_i(Y_i) \middle| X_i \right) \prod_{i=1}^n \phi_i(X_i)\right) &=& \prod_{i=1}^n \EE\left(\psi_i(Y_i) \phi_i(X_i)\right) \\ &=& \EE\left(\prod_{i=1}^n \psi_i(Y_i) \phi_i(X_i) \right)
    \end{eqnarray*}
by independence of $(X_i, Y_i)_{i=1,\dots,n}$. By definition of conditional expectation, we obtain 
    $$\EE\left(\prod_{i=1}^n \psi_i(Y_i) \middle| (X_j)_{j=1,\dots,n} \right) = \prod_{i=1}^n \EE\left(\psi_i(Y_i) \middle| X_i \right)$$
which means that $(Y_i)_{i=1,\dots,n}$ is an independent collection of random variables conditionally on $(X_i)_{i=1,\dots,n}.$
\end{proof}

The following lemmas will be useful to prove Proposition \ref{prop_lemme}.

\begin{lemma}\label{kappa}
     Let $m \geq 3$. Let $W,W_1,\ldots, W_m$ be independent and identically distributed random variables on $\mathbb R_{+}$ with density $f_W$ and cumulative distribution function $F_W$. Suppose that there exists $c_0>0$ and $\kappa \geq 0$, such that  $ f_W (t) \leq c_0 F_W(t) ^{\kappa} $ for all $t\in  \mathbb R_{+} $. 
     Let $\delta \in (0,1) $. It holds, with probability at least $1-\delta $, 
   $$ W_{(2)} -W_{(1)} >    C^{-1} \delta  m^{ \kappa-1 }  ,$$ 
   where $C=  c_0 \Gamma(\kappa+1) 3^{\kappa+1}.$
\end{lemma}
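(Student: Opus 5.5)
We will bound $\mathbb P(W_{(2)}-W_{(1)}\le s)$ directly and then choose $s=C^{-1}\delta m^{\kappa-1}$. The gap event $\{W_{(2)}-W_{(1)}\le s\}$ is contained in the union over ordered pairs $i\neq j$ of the events $\{W_i\le W_j\le W_i+s,\ W_i=W_{(1)}\}$. Since the smallest point is $W_i$, all other $m-2$ points lie above $W_i$. Conditioning on $W_i=t$ therefore gives
\[
\mathbb P(W_{(2)}-W_{(1)}\le s)\le m(m-1)\int_0^\infty f_W(t)\,\bigl(F_W(t+s)-F_W(t)\bigr)\,(1-F_W(t))^{m-2}\,dt .
\]

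The next step bounds the increment using the hypothesis. We have $F_W(t+s)-F_W(t)=\int_t^{t+s}f_W\le c_0 s\,F_W(t+s)^{\kappa}\le c_0 s$. This crude version is not sharp enough: it leaves $m(m-1)c_0 s\int f_W(1-F_W)^{m-2}=mc_0 s$, which would require $s\sim\delta/m$ rather than $\delta m^{\kappa-1}$.

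To recover the factor $m^{\kappa}$, we instead bound $f_W(t)\le c_0F_W(t)^\kappa$ in the integrand, while keeping $F_W(t+s)-F_W(t)$ expressed through $f_W$. Concretely, we write the probability as a double integral $m(m-1)\iint_{t\le u\le t+s} f_W(t)f_W(u)(1-F_W(t))^{m-2}\,du\,dt$ and apply the density bound to $f_W(u)$. Since $u\le t+s$, this gives $f_W(u)\le c_0F_W(u)^\kappa$ with $u$ close to $t$.

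Substituting $p=F_W(t)$ turns $f_W(t)\,dt$ into $dp$. One still has to compare $F_W(u)$ with $F_W(t)$. The cleanest route seems to be splitting into two cases:
\begin{itemize}
\item if $F_W(t)\ge 1/(3m)$-type thresholds fail, we use the trivial bound;
\item otherwise we use $F_W(u)\le F_W(t)+c_0s$.
\end{itemize}
This is presumably where the factor $3^{\kappa+1}$ comes from, e.g. from $F_W(u)\le 3\max(F_W(t), \dots)$.

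After the substitution the main term becomes
\[
m(m-1)\,c_0 s\int_0^1 (3p)^\kappa(1-p)^{m-2}\,dp = m(m-1)\,c_0 s\,3^\kappa B(\kappa+1,m-1)\le c_0 s\,3^{\kappa}\,\Gamma(\kappa+1)\,\frac{m(m-1)\Gamma(m-1)}{\Gamma(m+\kappa)} .
\]
Using $\Gamma(m+1)/\Gamma(m+\kappa)\lesssim m^{1-\kappa}$ (Gautschi-type inequality, absorbing another factor $3$ for $m\ge3$), the bound is $\le c_0\Gamma(\kappa+1)3^{\kappa+1}s\,m^{1-\kappa}=Cs\,m^{1-\kappa}$. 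Setting this equal to $\delta$ gives $s=C^{-1}\delta m^{\kappa-1}$, which is the claim.

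The main obstacle is controlling $F_W(u)^\kappa$ versus $F_W(t)^\kappa$ on $u\in[t,t+s]$ with clean constants. What I would try first is to avoid the issue by ordering differently: bound $F_W(u)^\kappa\le F_W(t+s)^\kappa$ and note that on the event in question we may also exclude the remaining points from $[t,t+s]$ only partially. Alternatively, I would integrate over $u$ (the second order statistic) with $(1-F_W(u))^{m-2}$ and $f_W(t)\le c_0F_W(t)^\kappa\le c_0F_W(u)^\kappa$. This second ordering makes the monotonicity go the right way: $t\le u$ gives $F_W(t)\le F_W(u)$. The identity $1-F_W(t)\ge 1-F_W(u)$ then lets us replace $(1-F_W(t))^{m-2}$ by $(1-F_W(u))^{m-2}$ after suitable bookkeeping, yielding the same Beta integral in $p=F_W(u)$.
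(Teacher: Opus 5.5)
There is a genuine gap: neither of your two routes is closed, and the step your preferred ``second ordering'' relies on goes the wrong way. Your first display is an upper bound in which the remaining $m-2$ points are only required to exceed $t=W_{(1)}$, which is where the factor $(1-F_W(t))^{m-2}$ comes from. You then invoke $1-F_W(t)\ge 1-F_W(u)$ to replace $(1-F_W(t))^{m-2}$ by $(1-F_W(u))^{m-2}$. That replacement makes the right-hand side of an upper bound smaller, so it is not permitted. Without it, after $f_W(t)\le c_0F_W(t)^\kappa\le c_0F_W(u)^\kappa$ the integrand is $c_0F_W(u)^\kappa f_W(u)(1-F_W(t))^{m-2}$. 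This is not a function of $u$ alone, so the substitution $p=F_W(u)$ does not produce the Beta integral. Your first route has the opposite difficulty. Applying the hypothesis to $f_W(u)$ leaves $F_W(u)^\kappa$, which is only bounded by $(F_W(t)+c_0s)^\kappa$, not by a multiple of $F_W(t)^\kappa$. The case split that is supposed to produce $(3p)^\kappa$ is never specified, so the ``main term'' is asserted rather than derived.

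The missing observation is that you should not discard information in the first step. On the event, the other $m-2$ points lie above $W_{(2)}=u$, so with the exact joint density of $(W_{(1)},W_{(2)})$,
\[
\mathbb P(W_{(2)}-W_{(1)}\le s)=m(m-1)\int_0^\infty\!\!\int_t^{t+s}f_W(t)f_W(u)\,(1-F_W(u))^{m-2}\,du\,dt .
\]
Now bound $f_W(t)\le c_0F_W(t)^\kappa\le c_0F_W(u)^\kappa$, integrate $t$ over $[\max(0,u-s),u]$ (length at most $s$), and substitute $p=F_W(u)$. This gives at most $m(m-1)c_0s\int_0^1p^\kappa(1-p)^{m-2}\,dp$, with no case split and no factor $3^\kappa$.

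This is precisely the paper's proof. There it is written as $\mathbb P(W_{(2)}-W_{(1)}>t)=m\,\mathbb E[(1-F_W(W+t))^{m-1}]$ and differentiated in $t$. The hypothesis is applied at the lower point $W$, and the change of variables $v=F_W(W+u)$ is made at the upper point. The factor $3^{\kappa+1}$ in $C$ does not come from any comparison of $F_W(u)$ with $F_W(t)$. It comes from $\int_0^1v^\kappa(1-v)^{m-2}\,dv\le\Gamma(\kappa+1)(m-2)^{-\kappa-1}$ together with $m-2\ge m/3$. Your Wendel/Gautschi bound on $\Gamma(m+1)/\Gamma(m+\kappa)$ would work just as well at that final step.
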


\begin{proof}
    Note that 
\begin{align*}
 \PP (W_{(2)} -W_{(1)}   > t ) &= \sum_{k=1} ^ m \mathbb P (  W_j > W_k + t \, : \, \forall  j \neq k) \\
 &= \sum_{k=1} ^ m \mathbb E [  (1- F_W ( W_k + t ))^{m-1} ]\\
 &= m \mathbb E [   (1- F_W ( W + t ))^{m-1} ].
  \end{align*}
Let us define, 
for any $t\in \mathbb{R}_+$,
\begin{equation*}
    D(t):= \frac{1}{m}\mathbb{P}(W_{(2)}-W_{(1)}>t) =  \mathbb E [   (1- F_W ( W + t ))^{m-1}  ].
\end{equation*}
It holds $D(0)=1/m$, $D(+\infty)=0$ and by its definition through the deviation probability, $D$ is a non-increasing function on $\mathbb{R}_+$. By using Fubini-Tonelli and integrating first with respect to $t$, we find
\begin{align*}
    &(m-1)\int_0^t \mathbb{E}[f_W(W+u)(1- F_W ( W + u ))^{m-2}] \, du \\ 
    &= \mathbb E\left[ (m-1)\int_0^t f_W(W+u)(1- F_W ( W + u ))^{m-2} \, du \right] \\ &= \mathbb E [   (1- F_W ( W ))^{m-1}  ] - \mathbb E [   (1- F_W ( W + t ))^{m-1}  ] \\ &= D(0)-D(t).
\end{align*}
We also have
\begin{align*}
      & \mathbb{E}[f_W(W+u)(1- F_W ( W + u ))^{m-2}] \\ &=   \int_0^{+\infty} f_W(r+u)f_W(r)(1- F_W ( r + u ))^{m-2} \, \ind_{f_W(r+u) > 0} \, dr \\
    & \leq  c_0 \int_0^{+\infty} f_W(r+u)  F_W(r)^\kappa (1- F_W ( r + u ))^{m-2} \, \ind_{f_W(r+u) > 0} \, dr.
\end{align*}
Let us now apply a change of variable, which is justified because it is differentiable and bijective when $f_W$ is positive. Note also that $ F _W ( F_W^{-1} (u)) = u$ because $F_W$ is continuous. By setting $v=F_W(r+u)$, we get $dv=f_W(r+u)dr$, which gives, for any $\kappa \geq 0$,
\begin{align*}
    &\int_0^{+\infty} f_W(r+u)  F_W(r)^\kappa (1- F_W ( r + u ))^{m-2}\, \ind_{f_W(r+t) > 0} \, dr \\ &= \int_{F_W(u)}^{1}  F_W(F_W^{-1}(v)-u)^\kappa (1- v)^{m-2} \, \ind_{f_W(F^{-1}_W(v)) > 0} \, dv \\ &\leq  \int_{0}^{1} v^\kappa (1- v)^{m-2}dv 
\end{align*}
because $\ind_{f_W(F^{-1}_W(v)) > 0} \leq 1$ and $ F_W(F_W^{-1}(v)-u) \leq F_W(F_W^{-1}(v)) = v.$
Moreover,
\begin{align*}
\int_{0}^{1} v^\kappa (1- v)^{m-2}dv &\leq \int_{0}^{1} v^\kappa \exp(-v(m-2)) dv = \dfrac{1}{(m-2)^{\kappa+1}}\int_{0}^{m-2} s^\kappa e^{-s} ds \\ &\leq \dfrac{\Gamma(\kappa+1)}{(m-2)^{\kappa+1}}.
\end{align*}
Putting things together, we have shown that
$$D(0) - D(t) \leq c_0 \int_{0}^t (m-1)  \dfrac{\Gamma(\kappa+1)}{(m-2)^{\kappa+1}} du \leq m c_0 \Gamma(\kappa+1) \dfrac{3^{\kappa+1}}{m^{\kappa+1}} t = C \dfrac{t}{m^\kappa}.$$
In the latter upper-bound, we used $m-2 \geq m/3$ since $m \geq 3$, and $C = c_0 \Gamma(\kappa+1) 3^{\kappa+1}$. As a consequence, $$\PP\left(W_{(2)} -W_{(1)}   > t \right) = m D(t) \geq m D(0) - m \dfrac{C}{m^\kappa} t =   1 - \dfrac{C}{m^{\kappa-1}} t.$$
Choosing $t = { \delta  \, m^{\kappa-1}} / {C}$, leads to the statement.
\end{proof}

The purpose of this lemma is to establish conditions ensuring the assumption of the previous lemma. 
\begin{lemma}\label{condition F vers kappa}  
     Let $W$ be a random variable on $\mathbb R_{+}$ with density $f_W$ and cumulative distribution function $F_W$. Suppose that there exists \( T_0 > 0 \) such that for all \( t \in (0,T_0) \), we have
\[
F_W(t) \geq c_1 t^d \quad \text{and} \quad f_W(t) \leq c_2 t^{d-1}.
\]  
Additionally, suppose that there exists $U>0$ such that, for all \( t \geq 0 \), we have \( f_W(t) \leq U \). Then,   for all \( t \geq 0 \), the following inequality holds
\[
f_W(t) \leq c F_W(t)^\kappa,
\]  
where $\kappa = 1 - 1/d$ and \( c = {c_1^{-\kappa}} \max\left\{c_2, {U} / {T_0^{d-1}}\right\} \).  
\end{lemma}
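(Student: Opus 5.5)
I will prove the inequality by splitting the half-line at $T_0$ and handling the two ranges separately. Throughout, $\kappa = 1-1/d$, so that $d\kappa = d-1$. The case $d=1$ gives $\kappa=0$, and then the claim reads $f_W \le c$ with $c=\max\{c_2,U\}$. This follows at once from $f_W\le U$, so I will assume $d\ge 2$ where it matters.

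\emph{Range $t\in(0,T_0)$.} Here I combine the two local hypotheses directly. From $F_W(t)\ge c_1 t^d$ I get
\[
F_W(t)^\kappa \ge c_1^\kappa t^{d\kappa} = c_1^\kappa t^{d-1}.
\]
Together with $f_W(t)\le c_2 t^{d-1}$ this gives
\[
f_W(t)\le c_2 t^{d-1}\le c_2 c_1^{-\kappa}F_W(t)^\kappa \le c\,F_W(t)^\kappa .
\]
At $t=0$ the left side is a density value on a null set. I can either redefine $f_W(0)=0$ or note that $f_W(0)\le c_2\cdot 0^{d-1}$ by extending the local bound. Either way the inequality holds trivially, or almost everywhere, which is all that Lemma \ref{kappa} uses.

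\emph{Range $t\ge T_0$.} Here I use monotonicity of $F_W$ and a limiting argument. For every $s<T_0$ we have $F_W(t)\ge F_W(s)\ge c_1 s^d$. Letting $s\uparrow T_0$ gives $F_W(t)\ge c_1T_0^d$, and hence
\[
F_W(t)^\kappa\ge c_1^\kappa T_0^{d-1}.
\]
Combining this with $f_W(t)\le U$ yields
\[
f_W(t)\le U = \frac{U}{T_0^{d-1}}\,T_0^{d-1} \le \frac{U}{T_0^{d-1}}\,c_1^{-\kappa}F_W(t)^\kappa \le c\,F_W(t)^\kappa .
\]
Taking the maximum of the two constants gives $c=c_1^{-\kappa}\max\{c_2,U/T_0^{d-1}\}$, which is exactly the stated constant.

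There is no real obstacle here. The only delicate points are the boundary case $t=T_0$, handled by the limit $s\uparrow T_0$ and monotonicity, and the value at $t=0$, which is immaterial because densities are defined up to null sets.
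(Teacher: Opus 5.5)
Your proof is correct and follows the same route as the paper: split at $T_0$, combine $f_W(t)\le c_2t^{d-1}$ with $F_W(t)^\kappa\ge c_1^\kappa t^{d-1}$ on $(0,T_0)$, and on $[T_0,\infty)$ use $f_W\le U$ together with $F_W(t)\ge c_1T_0^d$ by monotonicity. Your extra care at $t=T_0$, at $t=0$, and for $d=1$ only tidies up boundary points the paper's proof passes over. In particular, at $t=T_0$ you take the limit $s\uparrow T_0$, whereas the paper simply uses $F_W(T_0)\ge c_1T_0^d$ even though the hypothesis is stated on the open interval.
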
  

\begin{proof}  
For all \( t \in (0, T_0) \), we have  
\[
f_W(t) \leq c_2 t^{d-1} \leq c_2 \left(\frac{F_W(t)}{c_1}\right)^{1-1/d} = \dfrac{c_2}{c_1^\kappa} F_W(t)^\kappa,
\]  
where \( \kappa = 1 - 1/d \).  For \( t \geq T_0 \), we have
\[
f_W(t) \leq \frac{U}{(c_1T_0^d)^\kappa} (c_1T_0^d)^\kappa \leq  \frac{U}{(c_1T_0^d)^\kappa} F_W(T_0)^\kappa \leq \frac{U}{(c_1T_0^d)^\kappa} F_W(t)^\kappa.
\]  
Setting  
\[
c = \max\left(\frac{c_2}{c_1^{\kappa}}, \frac{U}{(c_1T_0^d)^{\kappa}}\right) = \frac{1}{c_1^{\kappa}} \max\left(c_2, \frac{U}{T_0^{d-1}}\right),
\]  
we obtain, for all \( t \geq 0 \), 
$ f_W(t) \leq c F_W(t)^\kappa$, as desired. 
\end{proof}  

In the following lemma, we provide assumptions on $Z$ to obtain results on $W$ that will be useful for applying Lemma \ref{condition F vers kappa}.

\begin{lemma}\label{Z vers X}
 Let $ Z $ be a random variable in $\mathbb R^d$ with density $f_Z$. Let $x\in \mathbb R^d$ and $W =\| Z - x\| $. The following holds:
\begin{enumerate}[label={(\alph*)}]
    \item  If $f_{Z} $ is bounded  by $M > 0$ then, for almost all $\rho>0$, $f_W(\rho) \leq Md V_d \rho^{d-1}$. 
    \item If $f_{Z} $ is bounded  by $M>0$ with compact support included in $B (0, \rho_0)$, and $x \in B (0, \rho_0)$, then $f_W $ is bounded from above almost everywhere by $M d V_d 2^{d-1} \rho_0^{d-1}.$ 
    \item If $f_{Z}$ is bounded from below by $b > 0$ with support $S_{Z}$ and if there exists $c_d>0$ and $T_0>0$ such that $\lambda (S_{Z} \cap B(x, \tau ) ) \geq c_d \lambda   (B(x, \tau ))$ for all $\tau \in (0,T_0]$ and $x\in S_{Z}$, then for all $\rho \leq T_0, \, F_W(\rho) \geq c_d b V_d \rho^d$.
\end{enumerate}  
\end{lemma}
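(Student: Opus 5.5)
The three parts rest on the same tool: the polar-coordinate (co-area) representation of the distribution of $W=\|Z-x\|$. For $\rho>0$ we have $F_W(\rho)=P^Z(B(x,\rho))=\int_{B(x,\rho)} f_Z(z)\,dz$. Passing to spherical coordinates centred at $x$ gives
\[
F_W(\rho)=\int_0^\rho \int_{S^{d-1}} f_Z(x+r u)\, r^{d-1}\, d\sigma(u)\, dr ,
\]
where $\sigma$ is the surface measure on the unit sphere, of total mass $dV_d$. Since $F_W$ is the integral of the inner function of $r$, that function is a density of $W$. Hence, for almost every $\rho>0$,
\[
f_W(\rho)=\rho^{d-1}\int_{S^{d-1}} f_Z(x+\rho u)\,d\sigma(u).
\]
This identity is the first thing I will establish. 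Absolute continuity comes directly from the displayed formula for $F_W$, via Fubini–Tonelli, since $f_Z\ge 0$ is integrable.

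\emph{Part (a).} Bound $f_Z\le M$ inside the spherical integral to get $f_W(\rho)\le M\,\sigma(S^{d-1})\,\rho^{d-1}=MdV_d\rho^{d-1}$ for almost all $\rho$.

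\emph{Part (b).} Use the support restriction. If $\|x\|<\rho_0$ and $\|x+\rho u\|<\rho_0$, then $\rho\le \|x\|+\|x+\rho u\|<2\rho_0$. So the integrand vanishes for $\rho\ge 2\rho_0$, and $f_W(\rho)=0$ there almost everywhere. For $\rho<2\rho_0$, part (a) gives $f_W(\rho)\le MdV_d\rho^{d-1}\le MdV_d(2\rho_0)^{d-1}$. This is the stated bound up to the harmless factor $\rho_0^{d-1}$ versus $(2\rho_0)^{d-1}$; they coincide as written, since $2^{d-1}\rho_0^{d-1}=(2\rho_0)^{d-1}$. The bound therefore holds almost everywhere on $\mathbb R_+$.

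\emph{Part (c).} Here no polar coordinates are needed. For $\rho\le T_0$,
\[
F_W(\rho)=\int_{B(x,\rho)\cap S_Z} f_Z\,d\lambda\ \ge\ b\,\lambda(S_Z\cap B(x,\rho))\ \ge\ b\,c_d\,\lambda(B(x,\rho))=c_d b V_d\rho^d .
\]
This uses only $f_Z\ge b$ on $S_Z$ and the mass condition. The mass condition is stated for $x\in S_Z$, which is how the lemma is applied in the paper (with $x\in S_X=S_Z$). For $\rho=0$ the inequality is trivial.

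The only mildly delicate step is justifying the polar-coordinate formula for $f_W$ as an almost-everywhere statement. It does not hold pointwise, since $f_Z$ is merely measurable. I would handle this by citing the standard polar integration formula for nonnegative measurable functions on $\mathbb R^d$, applied to $f_Z(\cdot)\mathds 1_{B(x,\rho)}(\cdot)$, and then invoking uniqueness of densities up to null sets.
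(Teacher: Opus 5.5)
Your proposal is correct and essentially follows the paper's proof. The paper also uses polar integration for (a), phrased as $\mathbb E[h(\|Z-x\|)]\le M d V_d\int h(\rho)\rho^{d-1}\,d\rho$ for nonnegative $h$ rather than via your explicit a.e.\ formula for $f_W$; it then gets (b) from $W\in[0,2\rho_0]$ almost surely and proves (c) by the same lower bound $b\,\lambda(S_Z\cap B(x,\rho))\ge b c_d V_d\rho^d$. Your remark that (c) needs the given $x$ to lie in $S_Z$ is a fair point the paper leaves implicit, and your hedging in (b) is unnecessary, since $2^{d-1}\rho_0^{d-1}=(2\rho_0)^{d-1}$ is exactly the stated constant.
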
  

\begin{proof}
Let us start by showing (a). Note that for any function $h: \mathbb R_{+} \to \mathbb R_{+}$, we have
\begin{equation*}
\mathbb   E [  h(\|{Z-x}\|)  ] = \int h(\|t-x\| )  f_{Z}(t)  dt \leq M  \int h(\|t-x\| ) dt = Md V_d \int h(\rho)\rho^{d-1} d\rho .     
\end{equation*}
Then almost everywhere $f_W(\rho) \leq Md V_d \rho^{d-1}$. Now we consider (b). 
From the first point, we have almost everywhere for $\rho > 0$, $f_W(\rho) \leq Md V_d \rho^{d-1}.$ Moreover, when $f_Z$ has compact support included in $B(0, \rho_0)$, we have that $W$ is supported on $[0,2\rho_0]$. We then have almost everywhere $f_W(\rho) \leq \sup_{\rho \leq 2\rho_0} Md V_d \rho^{d-1} = Md V_d 2^{d-1} \rho_0^{d-1}.$ 
To prove (c), note that we have, for all $\rho \leq T_0$,
\begin{align*}
 F_W(\rho) &= \PP({Z} \in B(x, \rho)) = \int_{S_{Z}\cap B (x,\rho)} f_{Z}(u) du \\ &\geq b  \lambda (S_{Z}\cap B (x,\rho) ) \geq b c_d \lambda (B(x,\rho) ) = bc_d V_d \rho^d.    
\end{align*} 
\end{proof}

\begin{lemma}\label{PV}
 Let $(Z_i)_{i=1,\dots,m} \subset  \mathbb R^d$. Let $\hat Z_i(z)$ be the $i$-th nearest neighbor of $z\in \mathbb R^d$ (breaking ties in favor of larger index). Let $ x\in \mathbb R^d$ and define $\mathcal V(x) = \{z\in\mathbb R^d\,:\, \hat Z_1(z) = \hat Z_1(x)\}$ and $ W_{(i)} = \| \hat Z_i(x) - x\| $, for $i=1,\ldots, m$. Let $P$ be a probability measure such that $ P ( B (x,t) )  \geq c_3 t^d $ for all $t\in (0,T_1)$ and $x \in S_{Z}$ and for some $T_1>0$. Then, whenever $ (W_{(2)} -  W_{(1)}) \leq 2T_1$, we have
   $$ P(\mathcal{V}(x) ) \geq \frac{c_3}{2^d}   (W_{(2)} -  W_{(1)}) ^d.$$
\end{lemma}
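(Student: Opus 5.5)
The plan is to show that $\mathcal V(x)$ contains a ball of radius $r:=(W_{(2)}-W_{(1)})/2$ centred at a suitable point of $S_Z$, and then apply the small-ball lower bound on $P$. Write $Z^\ast=\hat Z_1(x)$ and $\Delta=W_{(2)}-W_{(1)}$.

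The first step is to show that the ball $B(Z^\ast,\Delta/2)$ lies in $\mathcal V(x)$, possibly up to a boundary set which I handle by using the open ball. Take $z$ with $\|z-Z^\ast\|<\Delta/2$. For any other prototype $Z_j\neq Z^\ast$ we have $\|Z_j-x\|\geq W_{(2)}$. By the triangle inequality,
\[
\|Z_j-z\|\geq \|Z_j-x\|-\|x-z\| .
\]
This is awkward, because $\|x-z\|$ is not small. So instead I will use the ball around $x$ itself.

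For $z\in B(x,\Delta/2)$ and any $j$ with $Z_j\neq Z^\ast$,
\[
\|Z_j-z\|\geq W_{(2)}-\|z-x\|>W_{(2)}-\Delta/2 ,
\qquad
\|Z^\ast-z\|\leq W_{(1)}+\|z-x\|<W_{(1)}+\Delta/2 .
\]
Since $W_{(1)}+\Delta/2=W_{(2)}-\Delta/2$, the prototype $Z^\ast$ is strictly the nearest to $z$, so ties are irrelevant and $\hat Z_1(z)=Z^\ast$. Hence $B(x,\Delta/2)\subset\mathcal V(x)$. This is cleaner: it gives radius $\Delta/2$, exactly matching the factor $2^{-d}$ in the statement.

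The second step applies the assumption on $P$: $P(\mathcal V(x))\geq P(B(x,\Delta/2))\geq c_3(\Delta/2)^d$. This requires $\Delta/2<T_1$, which is guaranteed by the hypothesis $\Delta\leq 2T_1$; the endpoint case $\Delta/2=T_1$ follows by taking $t\uparrow T_1$ and using monotonicity/continuity from below of $P$ on balls. We also need the center $x$ to be admissible in the small-ball condition, which is stated for points of $S_Z$. In the application $x\in S_X=S_Z$, so I will note that $x$ is taken in $S_Z$, as in the use of the lemma in Proposition \ref{prop_lemme}.

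The only delicate points are therefore this membership of $x$ in the support and the endpoint $t=T_1$, both handled by the remarks above. The degenerate case $\Delta=0$ is trivial.
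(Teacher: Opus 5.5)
Your argument is correct. It differs from the paper only in where the ball is centred, and that choice affects generality.

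The paper uses the ball around the prototype $Z_k=\hat Z_1(x)$. It first notes $B(Z_k,\Delta_k/2)\subset V_k$ with $\Delta_k=\min_{i\neq k}\|Z_i-Z_k\|$. It then uses $\|Z_i-Z_k\|\ge\|Z_i-x\|-\|x-Z_k\|\ge W_{(2)}-W_{(1)}$ to get $\Delta_k\ge\Delta$, hence $B(Z_k,\Delta/2)\subset\mathcal V(x)$.

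You show directly, with one two-sided triangle inequality, that $B(x,\Delta/2)\subset\mathcal V(x)$. This is a bit shorter and avoids the intermediate separation $\Delta_k$. Your handling of the endpoint $\Delta/2=T_1$ and of $\Delta=0$ is also more careful than the paper's, which applies the hypothesis at $t=\Delta/2$ as if it always lay in $(0,T_1)$.

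What the paper's centring buys is generality. The small-ball bound is only assumed at points of $S_Z$, and the prototype $Z_k$ lies in $S_Z$ (almost surely). So the paper's conclusion holds for every query point $x\in\mathbb R^d$, as the lemma states. Your proof needs the extra assumption $x\in S_Z$. That is harmless in Proposition \ref{prop_lemme}, where $x\in S_X=S_Z$, but it does not give the lemma at its stated generality.

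To recover it, your abandoned first attempt was the right idea with the wrong triangle inequality. For $\|z-Z^\ast\|<\Delta/2$, write $\|Z_j-z\|\ge\|Z_j-Z^\ast\|-\|z-Z^\ast\|$ and use $\|Z_j-Z^\ast\|\ge\|Z_j-x\|-\|x-Z^\ast\|\ge\Delta$. This gives $\|Z_j-z\|>\Delta/2>\|Z^\ast-z\|$, so $B(Z^\ast,\Delta/2)\subset\mathcal V(x)$ with a centre where the small-ball hypothesis is available.
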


\begin{proof}
We have
\begin{align}\label{first_ineq}
     P  (\mathcal{V}(x) ) 
    & = \sum _{k=1} ^ m \ind_{V_k }(x) \,  P [ \mathcal{V}(x) ]  = \sum _{k=1} ^ m \ind_{V_k }(x) \,  P [  V_k ]
\end{align}
Remark that 
$$   B ( Z_k , \Delta_k /2) \subset  V_k      $$
where $\Delta _k  = \min _{i\neq k} \|Z_i - Z_k\|$. 
Hence
$$  P [  V_k ] \geq  P [ B ( Z_k , \Delta_k /2) ].$$
We have (second triangular inequality)
$$ \|Z_i- Z_k\| \geq \left|  \|Z_i- x\| - \|x- Z_k\| \right|, $$
but when $x\in V_k$, $\|x-Z_k\| \leq  \min_{i\neq k} \|Z_i- x\| $. Therefore
$$ \|Z_i- Z_k\| \geq   \|Z_i- x\| - \|x- Z_k\| \geq 0. $$
Hence, whenever $x\in V_k$,
$$ \Delta _k = \min_{i\neq k} \|Z_i- Z_k\| \geq   \min_{i\neq k} \|Z_i- x\| - \|x- Z_k\|.$$
but since $ W_i =\| Z_i - x\| $, for $i=1,\ldots, m$, and $W_{(i)} $ are the increasing ordered statistics, we have
$$\Delta _k \geq W_{(2)} - W_{(1)} := \Delta.$$
It follows that
$$  P  (V_k) \geq   P [ B ( Z_k , \Delta /2) ].   $$
Suppose that $(W_{(2)} -  W_{(1)})\leq 2T_1$,  using the assumption $ P ( B(x,t) )  \geq c_3 t^d $ for $t = \Delta/2 \in (0, T_1)$, we find 
$$ P  (V_k) \geq \frac{c_3}{2^d}  \Delta^d .$$ 
From \eqref{first_ineq}, it finally follows that,
$$  P (\mathcal{V}(x) ) \geq  \frac{c_3}{2^d} \Delta ^d \sum _{k=1} ^ m \ind_{V_k } (x) = \frac{c_3}{2^d} \Delta ^d.$$
\end{proof}

The following lemma is useful for proving Propositions \ref{centered tree not regular} and \ref{uniform tree not regular}.
\begin{lemma}\label{lemme moment}
   Let \( M, (M_i)_{i=1,\ldots, N} \) be a collection of independent and identically distributed random variables such that $\mathbb E[M] = 0$ and $\mathbb E[M^4] <\infty$. It holds \[\mathbb{E}\left[ \left( \sum_{i=1}^{N} M_i \right)^4\right] = N \mathbb{E}(M^4) + 3N (N-1) \mathbb{E}(M^2)^2. \]
\end{lemma}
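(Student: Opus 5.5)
Expand the fourth power of $S_N:=\sum_{i=1}^N M_i$ by multilinearity and integrate term by term, using independence and centering to kill most terms. Writing
\[
\Big(\sum_{i=1}^N M_i\Big)^4=\sum_{i,j,k,\ell=1}^N M_iM_jM_kM_\ell ,
\]
each summand has finite expectation: by H\"older, $\mathbb E|M_iM_jM_kM_\ell|\le \mathbb E[M^4]<\infty$. Linearity of expectation therefore applies to this finite sum, with no further integrability issues.

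Next, sort the index quadruples $(i,j,k,\ell)$ by the pattern of equal indices and compute $\mathbb E[M_iM_jM_kM_\ell]$ by independence as a product of moments of $M$. If some index occurs exactly once, the factor $\mathbb E[M]=0$ makes the term vanish. This kills the patterns in which all four indices are distinct, the pattern $(2,1,1)$, and the pattern $(3,1)$. Only two patterns survive:
\begin{itemize}
\item all four indices equal, which gives $N$ terms, each equal to $\mathbb E[M^4]$;
\item two distinct indices each appearing twice, each term equal to $\mathbb E[M^2]^2$.
\end{itemize}

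The only step needing care is counting the second type. Choose an unordered pair $\{a,b\}$ with $a\neq b$, in $\binom N2$ ways. Then choose which two of the four positions carry $a$, in $\binom42=6$ ways. This gives $6\binom N2=3N(N-1)$ ordered quadruples. Summing the two surviving contributions yields
\[
\mathbb E[S_N^4]=N\,\mathbb E(M^4)+3N(N-1)\,\mathbb E(M^2)^2 .
\]
This counting is the main point to verify; the rest is routine.
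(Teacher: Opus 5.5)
Your proof is correct and follows the paper's argument. Like the paper, you expand $\bigl(\sum_i M_i\bigr)^4$ into products $M_iM_jM_kM_\ell$, discard every term in which some index appears exactly once (by independence and centering), and count the all-equal and two-pair terms; your count $6\binom{N}{2}$ agrees with the paper's $3\sum_{i\neq q}$, i.e.\ $3N(N-1)$, and your H\"older remark on the integrability of each summand is a small detail the paper leaves implicit.
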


\begin{proof}
We have \[   \left( \sum_{i=1}^{N} M_i \right)^4 = \sum_{i,p,q,r=1}^{N} M_i M_p M_q M_r.\]
Since the \( M_i \) are independent and centered, the expectation of each product \( M_i M_p M_q M_r \) will be zero if at least one of the indices is distinct. This restricts the analysis to cases where all indices are identical or two pairs of indices are identical. If all indices are identical, i.e. \( i = p = q = r \), then the expectation of \( M_i^4 \) contributes to the sum: $ \sum_{i=1}^{N} \mathbb{E}(M_i^4) = N \, \mathbb{E}(M^4)$. When two indices are identical and the other two are also identical, i.e. \( i = p \neq q = r \), we get a product of the form \( M_i^2 M_q^2 \). We have 3 choices either $i$ is equal to \( p \), \( q \), or \( r \). The remaining two indices must necessarily be equal. This yields: $ 3 \sum_{i \neq q} \mathbb{E}(M_i^2) \, \mathbb{E}(M_q^2) = 3N (N - 1) \, \mathbb{E}(M^2)^2.$ Combining the two terms, this proves the result.
\end{proof}

\end{document}